\documentclass[11pt]{amsart}

\usepackage{amsmath, amsfonts, amssymb, amsthm, mathtools}
\usepackage{tikz-cd}
\usepackage[utf8]{inputenc}
\usepackage{csquotes}
\usepackage{subcaption}
\usepackage[shortlabels]{enumitem}
\usepackage[hidelinks]{hyperref}
\usepackage[english]{babel}
\usepackage[giveninits, citestyle=numeric-comp,sorting=nty,sortcites, backend=biber]{biblatex}

\bibliography{main.bib}
\AtEveryBibitem{
	\clearfield{doi}
	\clearfield{url}
	\clearfield{month}
	\clearfield{day}
	\clearfield{issn}
	\clearfield{pages}
	\clearfield{publisher}
	\clearfield{isbn}
	\clearfield{pagetotal}
	\clearfield{note}
}
\DeclareFieldFormat*{urldate}{}
\DeclareFieldFormat*{title}{#1}
\renewbibmacro{in:}{}
\DeclareFieldFormat*{volume}{vol. #1}
\DeclareFieldFormat*{number}{no. #1}
\renewbibmacro*{volume+number+eid}{%
	\printfield{volume}%
	\setunit*{\addcomma\space}
	\printfield{number}%
	\setunit{\addcomma\space}%
	\printfield{eid}
}
\DeclareFieldFormat[misc]{date}{(#1)}

\usepackage{todonotes}

\newtheorem{theorem}{Theorem}[section]
\newtheorem{proposition}[theorem]{Proposition}
\newtheorem{corollary}[theorem]{Corollary}
\newtheorem{lemma}[theorem]{Lemma}
\newtheorem{definition}[theorem]{Definition}
\newtheorem{remark}[theorem]{Remark}
\newtheorem{example}[theorem]{Example}

\newtheorem{claim}[theorem]{Claim}


 \DeclareMathOperator{\mcg}{Map}
 \DeclareMathOperator{\pmcg}{PMap}
 \DeclareMathOperator{\homeo}{Homeo}

\DeclareMathOperator{\diffeo}{Diff}

\newcommand{\integers}{\mathbb{Z}}

 \DeclareMathOperator{\cut}{Cut}

\newcommand{\forget}{\mathcal{F}}
\newcommand{\centralizer}{\mathcal{Z}}

\newcommand{\cerrado}[1]{\overline{U_{#1}}}
\newcommand{\cota}{ 6\cdot 2^{g-4}}

\usepackage{xcolor}

\newcommand{\new}[1]{#1}
\newcommand{\neww}[1]{#1}

\title{Homomorphisms between pure mapping class groups}
\author{Rodrigo De Pool}
\thanks{The author acknowledges financial support from the grant  CEX2019-000904-S funded by  MCIN/AEI/ 10.13039/501100011033 and from the grant PGC2018-101179-B-I00.}

\begin{document}

\begin{abstract}
	Let $S$ and $S'$ be orientable finite-type surfaces of genus $g\geq 4$ and $g'$, respectively. We prove that every multitwist-preserving map between pure mapping class groups $\pmcg(S)\to \pmcg(S')$  is induced by a  multi-embedding. As an application, we classify all homomorphisms $\pmcg(S)\to \pmcg(S')$ for $g\geq 4$ and $g' \leq  \cota$.
\end{abstract}

\maketitle


\section{Introduction}\label{introd}


Let $S$ be an orientable finite-type surface possibly with punctures or boundary.  The \emph{mapping class group} $\mcg(S)$ is the group of orientation preserving homeomorphisms of $S$ up to homotopy. The \emph{pure mapping class group} $\pmcg(S)$ is the subgroup of the mapping class group acting trivially on the set of punctures of $S$.

A central objective in the study of mapping class groups is to connect the geometry of the surface $S$ to the algebraic structure of the group $\pmcg(S)$. In this article, we focus on homomorphisms $\pmcg(S)\to \pmcg(S')$ between pure mapping class groups. Our first result relates these homomorphisms to maps between the diffeomorphism groups. 

Let $\diffeo_c^+(S)$ be the group of orientation preserving diffeomorphisms of $S$ compactly supported on $S\setminus \partial S$, and let $\pi:\diffeo^+_c(\cdot) \to \pmcg(\cdot)$ be the map projecting each diffeomorphism to its homotopy class. It is a question of Aramayona and Souto \cite[Question 5]{aramayona_rigidity_2016} whether every homomorphism $\varphi:\pmcg(S)\to \pmcg(S')$ admits a lift $\overline{\varphi}$  to the diffeomorphism groups, that is, whether we have a commutative diagram
\begin{center}
	\begin{tikzcd}
		\diffeo^+_c(S) \arrow[r, "\overline{\varphi}", dashed] \arrow[d, "\pi"] & \diffeo^+_c(S') \arrow[d, "\pi"] \\
		\pmcg(S) \arrow[r, "\varphi"]                          & \pmcg(S').    
	\end{tikzcd}
\end{center}
The next theorem yields a positive answer to  \cite[Question 5]{aramayona_rigidity_2016} under suitable conditions on the genus of the surfaces. 

\begin{theorem}\label{thm:version_souto}
	Let $S, S'$ be two orientable finite-type surfaces of genus $g\geq 4$ and $g' \leq 6\cdot 2^{g-4}$. Any homomorphism $\varphi:\pmcg(S)\to \pmcg(S')$ admits a lift $\overline{\varphi}: \diffeo^+_c(S)\to \diffeo^+_c(S')$ to the diffeomorphism groups.
\end{theorem}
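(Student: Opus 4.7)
The plan is to combine two ingredients: a classification of $\varphi$ in topological terms (provided by the paper's main result), and a direct construction of a lift for homomorphisms that arise topologically.

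For the first ingredient, under the genus bound $g'\leq 6\cdot 2^{g-4}$, I would argue that $\varphi$ must be \emph{multitwist-preserving}, i.e.\ must send every Dehn twist of $S$ to a multitwist of $S'$. The key input is that $\pmcg(S)$ contains large commuting configurations of Dehn twists (for example, along a pants decomposition or along subsurface-filling chains) whose algebraic structure is rigid: they generate free abelian groups of large rank with controlled centralizers and no finite-order elements. Via the Nielsen--Thurston classification applied inside $\pmcg(S')$, the bounded genus of $S'$ forces the images of such twists to be reducible of a very restrictive type, and in fact multitwists — the threshold $6\cdot 2^{g-4}$ should emerge as the sharp numerical bound from this comparison of abelian ranks and torsion data. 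Once multitwist-preservation is established, I apply the paper's main theorem (announced in the abstract) to conclude that $\varphi$ is induced by a \emph{multi-embedding} $\Phi$: a map obtained by cutting $S$ along some multicurve $\mathcal{M}$, capping off some boundary components and embedding the resulting (possibly disconnected) surface into $S'$.

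For the second ingredient, given such a topological model $\Phi$, I would define $\overline{\varphi}$ on $h\in\diffeo_c^+(S)$ by first restricting $h$ to a representative supported away from a collar of $\mathcal{M}$, pushing it forward through $\Phi$ to a diffeomorphism of the image of the cut surface in $S'$, and extending by the identity on the complement. Twist components of $\varphi$ along curves of $\mathcal{M}$ are realized by explicit compactly supported twist diffeomorphisms on disjoint annular collars in $S'$, composed in with the pushforward. One then checks that $\overline{\varphi}$ is a group homomorphism (immediate from functoriality of pushforward and the disjointness of the annular supports) and that $\pi\circ\overline{\varphi}=\varphi\circ\pi$, which reduces to the defining property of ``induced by a multi-embedding'' at the level of $\pmcg$.

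The main obstacle is the multitwist-preservation step, where the explicit bound $g'\leq 6\cdot 2^{g-4}$ has to be squeezed out of a rank/torsion comparison between $\pmcg(S)$ and $\pmcg(S')$; this is the substantive analytic work. By contrast, invoking the paper's main structural theorem and constructing the lift from a multi-embedding are essentially formal, the only care needed being the bookkeeping of twist contributions along the cutting multicurve so that the diffeomorphisms produced on $S'$ remain compactly supported and assemble into a group homomorphism.
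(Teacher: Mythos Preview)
Your high-level two-step strategy---first show $\varphi$ is induced by a multi-embedding, then lift---matches the paper exactly (Theorem~\ref{thm:version_souto} is deduced from Theorem~\ref{thm:clasificacion} and Proposition~\ref{prop:lifts}). However, your notion of ``multi-embedding'' is not the paper's. In the paper a multi-embedding is a finite collection $\{\iota_i:S\hookrightarrow S'\}$ of pairwise disjoint embeddings of the \emph{entire} surface $S$; the induced homomorphism sends $\phi$ to the map that is $\iota_i\circ\phi\circ\iota_i^{-1}$ on each $\iota_i(S)$ and the identity elsewhere (Definition~\ref{def:colec_emb_induc}). There is no cutting of $S$ along a multicurve, no capping of boundary components, and no auxiliary twist data. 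Consequently your lifting construction---restricting away from a collar of $\mathcal{M}$, pushing forward pieces, and composing in annular twist diffeomorphisms---is solving the wrong problem. The actual lift (Proposition~\ref{prop:lifts}) is essentially a one-line construction: isotope each $\iota_i$ to a smooth embedding and use the same conjugation formula at the level of $\diffeo_c^+$; no twist corrections arise.

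On the first ingredient, the paper does not argue via Nielsen--Thurston and an abelian-rank comparison. It uses Bridson's theorem that images of Dehn twists are roots of multitwists, rules out finite-order images via bounds on finite and finite-abelian subgroups of $\pmcg(S')$, bounds the number of components of the resulting multicurves, and then runs an induction on $g$; the constant $6\cdot 2^{g-4}$ emerges from this induction, not from a direct rank comparison. Since all of this is already packaged as Theorem~\ref{thm:clasificacion}, the correct move is simply to cite it; the sketch you give would not on its own produce the stated genus bound.
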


We emphasize that Theorem \ref{thm:version_souto} only imposes conditions on the genus of the surfaces, and has no assumptions on the number of boundary components or punctures.

A theorem of Hurtado \cite{hurtado_continuity_2015} states that every homomorphism from the identity component of $\diffeo^+_c(S)$ to the identity component of  $\diffeo^+_c(S')$ is an `extended topologically diagonal map'.  Although we do not rely on Hurtado's result, the lifts $\overline{\varphi}: \diffeo^+_c(S)\to \diffeo^+_c(S')$ from Theorem \ref{thm:version_souto} admit an analogous description. Namely, we say that a homomorphism $\overline{\varphi}: \diffeo^+_c(S)\to \diffeo^+_c(S')$  is \emph{induced by a multi-embedding} if there is a set of disjoint  embeddings $\{\iota_i: S\hookrightarrow S'|\; i=1,2,\dots,n\}$  such that $\overline{\varphi}$ is given by $\phi\mapsto \hat{\phi}$, where $\hat{\phi}$ is defined as
\[ 
\hat{\phi}(x) = 
\begin{cases}
	x & x\not \in \iota_i(S) \; \forall i,\\ 
	\iota_i \circ \phi \circ \iota_i^{-1}(x) & x\in \iota_i(S).
\end{cases}
\]
If $\overline{\varphi}$ is a lift of the homomorphism $\varphi:\pmcg(S)\to \pmcg(S')$, we also say that $\varphi$ is  induced by a multi-embedding.

To prove Theorem \ref{thm:version_souto} we actually show that, within the same genus bounds, every  homomorphism $\pmcg(S)\to\pmcg(S')$ is induced by a \emph{multi-embedding}.

\begin{theorem}\label{thm:clasificacion}
	Let $S,S'$ be two orientable finite-type surfaces of genus  $g\geq 4$ and $g' \leq \cota$. If $\varphi:\pmcg(S) \to \pmcg(S')$ is a nontrivial homomorphism, then  $\varphi$ is induced by a multi-embedding.
\end{theorem}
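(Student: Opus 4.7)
The plan is to reduce Theorem~\ref{thm:clasificacion} to the paper's central technical result announced in the abstract: every multitwist-preserving homomorphism $\pmcg(S)\to\pmcg(S')$ is induced by a multi-embedding. Granting this, it suffices to show that, under the hypothesis $g\geq 4$ and $g'\leq \cota$, every nontrivial homomorphism $\varphi:\pmcg(S)\to \pmcg(S')$ is \emph{multitwist-preserving}, meaning that $\varphi(T_\alpha)$ is a multitwist for every simple closed curve $\alpha\subset S$. Since Dehn twists generate $\pmcg(S)$, this single property feeds into the main theorem and yields the full classification.

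To establish multitwist-preservation, I would fix a non-separating curve $\alpha\subset S$ and analyze the Nielsen--Thurston decomposition of $f:=\varphi(T_\alpha)$: a canonical reduction system $\sigma$ on $S'$, a list of periodic or pseudo-Anosov pieces on the components of $S'\setminus\sigma$, and a multitwist along $\sigma$. The goal is to force every periodic and pseudo-Anosov piece of $f$ to be trivial. The principal tool is the abundance of abelian subgroups of $\pmcg(S)$ generated by Dehn twists about disjoint curves, together with standard Dehn twist relations (lantern, chain, braid). Every such abelian subgroup maps to an abelian subgroup of $\pmcg(S')$, which strongly restricts how the canonical reduction systems and Nielsen--Thurston pieces of the images can interact.

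Two essentially independent arguments should dispatch the unwanted pieces. For periodic pieces, one combines torsion-freeness of certain subgroups with lantern-type relations: if $f$ had a periodic piece of order $n$, a relation among Dehn twists in $\pmcg(S)$ would, under $\varphi$, produce a torsion or otherwise forbidden element in $\pmcg(S')$. For pseudo-Anosov pieces, one exploits the fact that $T_\alpha$ has an enormous centralizer in $\pmcg(S)$ (every Dehn twist about a curve disjoint from $\alpha$), whereas a pseudo-Anosov mapping class on a subsurface $Y\subset S'$ has centralizer of bounded rank within $\pmcg(Y)$. Passing these many commuting twists through $\varphi$ forces their images to act trivially on any pseudo-Anosov support, which, once every curve of $S$ is cycled through, effectively prevents a pseudo-Anosov piece from existing at all.

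The main obstacle, I expect, is tracking the exponential factor $2^{g-4}$ that appears in the bound $\cota$. Exponential dependencies of this shape typically signal a recursive or dichotomy argument, and the critical combinatorial step will be bounding the number of pairwise disjoint subsurfaces of $S'$ that can simultaneously serve as supports of pseudo-Anosov pieces for the images of a full family of disjoint Dehn twists in $\pmcg(S)$. Once this counting forces $g'$ to exceed $\cota$ whenever a non-multitwist piece exists, multitwist-preservation follows, and the previously established structural theorem promotes $\varphi$ to a homomorphism induced by a multi-embedding, completing the proof.
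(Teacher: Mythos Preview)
Your overall reduction is correct: the paper also restates Theorem~\ref{thm:clasificacion} as ``every nontrivial $\varphi$ is multitwist-preserving'' and then invokes Theorem~\ref{thm:preserva_multis_emb}. But your plan for establishing multitwist-preservation has a genuine gap.

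First, the pseudo-Anosov analysis you propose is unnecessary. The paper imports Bridson's theorem (Theorem~\ref{thm:bridson}): for $g\geq 3$, the image of any Dehn twist under $\varphi$ is automatically a \emph{root of a multitwist}. So there are never pseudo-Anosov pieces to rule out; the Nielsen--Thurston type is known in advance. What remains is the trichotomy: $\varphi(d_c)$ is finite-order, a multitwist, or a \emph{proper} root of a multitwist (infinite order but not itself a multitwist).

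The finite-order case is handled not by lantern relations but by embedding large finite subgroups into $\pmcg(S')$ (a symmetric group of order $(2g+2)!$ if the order is $2$, an abelian group of order $\geq 2^{g+2}$ otherwise) and comparing against the Hurwitz and Nakajima--Maclachlan bounds; this is Section~\ref{secTorsion}.

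The real difficulty, which your proposal does not address, is the \emph{proper root} case. Here $\varphi(d_c)$ has infinite order, no pseudo-Anosov piece, yet is not a multitwist. The paper handles this by induction on $g$: one cuts $S$ along a non-separating curve $c_1$, obtains a surface $S_{c_1}$ of genus $g-1$, and studies the induced maps $\pmcg(S_{c_1})\to\pmcg(R'_i)$ on the pieces of $S'\setminus\varphi_*(c_1)$. The inductive hypothesis forces at most one piece $R'_1$ to carry a non-multitwist-preserving map, and that piece must have genus exceeding $6\cdot 2^{(g-1)-4}$; if two pieces did, the genus of $S'$ would already exceed the bound. This recursive halving is the true source of the exponent $2^{g-4}$, not a count of pseudo-Anosov supports. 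Ruling out the single bad piece then requires a delicate decomposition $\varphi(d_{c_i})=M_i h_i$ into a multitwist times a finite-order element (Claims~\ref{claim:descomposicion} and~\ref{propiedad:los_elementos_se_trenzan}), using Birman--Hilden theory on the quotient by $h_i$. None of this machinery appears in your sketch, and without it the argument does not close.
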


Theorem \ref{thm:clasificacion} extends some already known results. For instance, when $g'=g$, we recover results of Ivanov \cite{ivanov_automorphisms_1988} and Ivanov-McCarthy \cite{ivanov_automorphism_1997}  on injective endomorphisms of the  mapping class group. Also, if  $g'\leq2g-1$, the statement recovers Aramayona-Souto's classification \cite{aramayona_homomorphisms_2013}, which asserts that every nontrivial homomorphism $\pmcg(S) \to \pmcg(S')$ is induced by an embedding. Another immediate consequence of Theorem \ref{thm:clasificacion} is the following generalization  of \cite[Theorem 1]{chen_constraining_2021} by Chen-Lanier.
\begin{corollary}
	Let $S_{g}, S_{g'}$ be two closed surfaces of genus $g\geq 4$ and $g' \leq \cota$. If $g\ne g'$, every homomorphism $\mcg(S_g) \to \mcg(S_{g'})$  is trivial. 
\end{corollary}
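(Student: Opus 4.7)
The plan is to reduce the corollary directly to Theorem \ref{thm:clasificacion}, using only a topological fact about embeddings between closed surfaces. Since $S_g$ and $S_{g'}$ are closed, they have no punctures, so $\pmcg(S_g) = \mcg(S_g)$ and similarly for $S_{g'}$. Therefore any homomorphism $\varphi: \mcg(S_g) \to \mcg(S_{g'})$ is a homomorphism $\pmcg(S_g) \to \pmcg(S_{g'})$, and the genus hypotheses $g \geq 4$ and $g' \leq 6\cdot 2^{g-4}$ place us in the range of Theorem \ref{thm:clasificacion}.

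Argue by contradiction: suppose $\varphi$ is nontrivial. Then by Theorem \ref{thm:clasificacion}, $\varphi$ is induced by a multi-embedding, i.e.\ there is a collection $\{\iota_i: S_g \hookrightarrow S_{g'}\}_{i=1}^{n}$ of embeddings with pairwise disjoint images; since $\varphi$ is nontrivial, $n \geq 1$ (otherwise the prescribed formula yields the identity, forcing $\varphi$ to be trivial). I would then pick any embedding $\iota = \iota_1$ in this collection and examine its image. Because $S_g$ is a closed surface, $\iota(S_g)$ is compact, hence closed in $S_{g'}$. By invariance of domain, $\iota(S_g)$ is also open in $S_{g'}$. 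Since $S_{g'}$ is connected, this forces $\iota(S_g) = S_{g'}$, so $\iota$ is a homeomorphism and $g = g'$, contradicting our hypothesis $g \neq g'$.

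There is no real obstacle: the only step that requires any thought is the observation that a closed surface cannot properly embed in a closed connected surface, which is immediate from invariance of domain together with compactness. Consequently $\varphi$ must be trivial, completing the proof.
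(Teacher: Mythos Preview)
Your proof is correct and is exactly the kind of argument the paper has in mind; the paper states the corollary as an ``immediate consequence'' of Theorem~\ref{thm:clasificacion} without giving a separate proof, and your reduction via invariance of domain (forcing any embedding $S_g\hookrightarrow S_{g'}$ between closed surfaces to be a homeomorphism) is the natural way to make that immediacy explicit.
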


We stress the importance of the genus bounds in Theorem \ref{thm:clasificacion}. For instance, for some $g'$ large enough, we may consider a finite quotient of $\pmcg(S)$ that embeds in $\pmcg(S')$. Such a map cannot be induced by a multi-embedding. Similarly, if $S$ has genus $g\leq 2$ and $S'$ is any closed surface, there are homomorphisms $\pmcg(S)\to \pmcg(S')$ with finite image $\mathbb{Z}/2\mathbb{Z}$. Again, such maps are not induced by multi-embeddings.  

More interestingly, the statement of Theorem \ref{thm:clasificacion} requires some exponential upper bound on  $g'$. Indeed, if $g' \geq (g-1)\cdot 2^{2g}+1$, there are injections between mapping class groups  that are not induced by multi-embeddings. In Appendix \ref{appendix:prueba_lema} we produce such homomorphisms following a construction of Ivanov-McCarthy \cite{ivanov_injective_1999}.

The proof of Theorem \ref{thm:clasificacion} relies on the following result of Bridson  \cite{bridson_semisimple_2010}: If $S$ has genus $g\geq 3$, then every homomorphism $\varphi: \pmcg(S)\to \pmcg(S')$ sends Dehn twists to roots of multitwists. Building on this, we show that the genus bounds imply that  $\varphi$ is  \emph{multitwists-preserving}, meaning that the image of every non-separating Dehn twist is a multitwist. Then,  that $\varphi$ is induced by a multi-embedding follows from the next result. 

\begin{theorem}\label{thm:preserva_multis_emb}
	Let $S,S'$ be two orientable finite-type surfaces and let $g\geq 3$ be the genus of $S$. If   $\varphi:\pmcg(S)\to \pmcg(S')$ is a multitwist-preserving map, then  $\varphi$  is induced by a multi-embedding.
\end{theorem}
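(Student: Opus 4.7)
The plan is to read off the multi-embedding directly from how $\varphi$ acts on Dehn twists along non-separating curves of $S$, and then verify that the resulting map induces $\varphi$ on a generating set of $\pmcg(S)$.

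First, for any non-separating curve $\gamma \subset S$, the hypothesis yields an expression $\varphi(T_\gamma) = T_{\alpha_1}^{k_1} \cdots T_{\alpha_n}^{k_n}$ with disjoint essential $\alpha_i$ in $S'$ and $k_i \ne 0$. Since all non-separating Dehn twists are conjugate in $\pmcg(S)$ (as $g \geq 3$), the integer $n$ and the multiset $\{k_i\}$ are invariants of $\varphi$. I would then use the braid relation $T_\gamma T_\delta T_\gamma = T_\delta T_\gamma T_\delta$ for a curve $\delta$ with $i(\gamma,\delta)=1$ to pin down the topology: the image braid relation forces the components of $\varphi(T_\gamma)$ and $\varphi(T_\delta)$ to pair up along curves meeting once, which gives that each $\alpha_i$ is non-separating, that $|k_i|=1$, and (after replacing $\varphi$ by a sign-twist if necessary) that all $k_i=1$.

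Next, I would promote this local picture to a global combinatorial structure. When $\gamma$ and $\gamma'$ are disjoint non-separating curves, $\varphi(T_\gamma)$ and $\varphi(T_{\gamma'})$ commute, so their supports can be taken disjoint in $S'$. Iterating the braid pairing from the previous step along chains and cut systems that fill $S$, I would construct an indexing $j = 1, \dots, n$ together with a coherent assignment $\gamma \mapsto \alpha_j(\gamma)$ on every non-separating curve of $S$ that preserves disjointness and intersection-one. For each fixed $j$, such a curve-system assignment can be realized by a topological embedding $\iota_j : S \hookrightarrow S'$ via an Alexander-method / simplicial-rigidity argument, and the $n$ embeddings have pairwise disjoint images since the supports corresponding to distinct indices $j$ were kept disjoint by construction.

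Finally, the resulting multi-embedding $\{\iota_1, \dots, \iota_n\}$ induces a homomorphism $\varphi' : \pmcg(S) \to \pmcg(S')$ which, by construction, satisfies $\varphi'(T_\gamma) = \varphi(T_\gamma)$ on every non-separating Dehn twist. Since $\pmcg(S)$ is generated by non-separating Dehn twists for $g \geq 3$, this forces $\varphi = \varphi'$. \textbf{The main obstacle} is the coherence step: producing a globally consistent $n$-fold indexing of the supports across all non-separating curves at once, rather than curve-by-curve or pair-by-pair. The pairings provided by individual braid relations are easy, but one must exploit enough relations in $\pmcg(S)$ (braid, chain, and likely lantern) so that the indexing does not permute nontrivially as one moves around the curve graph of $S$; this is what ultimately separates the $n$ disjoint copies of $S$ inside $S'$ and makes the multi-embedding well defined.
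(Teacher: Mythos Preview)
Your outline diverges from the paper's proof, and there are two concrete gaps.

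First, your braid-relation step overstates what the relation gives. By Theorem~\ref{lema:trenzas}, if $m_A$ and $m_B$ are braided multitwists then $m_A = d_{a_1}^{\pm 1}\cdots d_{a_k}^{\pm 1} m_C$ and $m_B = d_{b_1}^{\pm 1}\cdots d_{b_k}^{\pm 1} m_C$ with a \emph{common} multitwist factor $m_C$, whose components may be separating and whose exponents may be arbitrary. So the braid relation alone does not force all components to pair up or all $|k_i|$ to equal $1$; you must first kill $m_C$. In the paper this is Lemma~\ref{lema:preservar_mults_no_comparten_curvas_1}, and its proof uses the lantern relation \emph{together with} the embeddings already produced by a different mechanism (the cutting below), so it is not available at the point where you want to invoke it. Relatedly, even once $m_C = \mathrm{id}$, the signs $\epsilon_i \in \{\pm 1\}$ attached to the various components need not agree: different embeddings in a multi-embedding may preserve or reverse orientation independently (see Equation~\eqref{eq:construccion_embs}), so a single global sign-flip cannot normalize them all to $+1$.

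Second, and more importantly, the paper does not attempt your global coherence step at all. Instead it observes (Lemma~\ref{lema:preserva_mts_fija_curva}) that a multitwist-preserving map with at least two components per image is automatically \emph{reducible}, cuts $S'$ along a maximal reducing multicurve $\eta$, and shows that each resulting factor $\varphi_i : \pmcg(S) \to \pmcg(R'_i)$ is either trivial or twist-preserving (single component). Theorem~\ref{thm:preservargiros_embinducido} then supplies an honest embedding $\iota_i : S \hookrightarrow R'_i$ for each nontrivial factor, and the remainder of the argument checks (via Lemmas~\ref{lema:preservar_mults_no_comparten_curvas_1} and~\ref{lema:preservar_mults_no_comparten_curvas_2} plus a $3$-chain computation) that the collection $\{\iota_i\}$ satisfies the hypotheses of Lemma~\ref{lema:colec_emb_induc} and induces $\varphi$. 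This reduce-and-cut maneuver is precisely what sidesteps the indexing problem you correctly flag as the main obstacle; without it, your coherence step remains a genuine gap rather than a technicality.
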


A key part in the proof of Theorem \ref{thm:preserva_multis_emb} involves  understanding when multitwists satisfy the braid relation, for which we rely on our earlier paper \cite{de_pool_braided_2024}. Now,  to understand \emph{non} multitwist-preserving homomorphisms $\pmcg(S)\to \pmcg(S')$ one probably requires a convenient description of roots of multitwists and their relations. These elements are quite mysterious with nontrivial examples discovered not so long ago  \cite{margalit_dehn_2009}. Thus, a complete characterization of homomorphisms  $\pmcg(S)\to \pmcg(S')$ remains elusive. 

To conclude, we note that the results presented here will be used in a forthcoming paper \cite{holomorphic_moduli} to classify holomorphic maps between moduli spaces, within the same range as  Theorem \ref{thm:clasificacion}.

\subsection*{Plan of the paper} 
In Section \ref{sec:preliminares} we introduce some definitions and results used throughout the article. In Section \ref{sec:preserva_giros} we prove a weaker version of Theorem \ref{thm:preserva_multis_emb}.  In Section \ref{sec:preserva_multigiros} we prove  Theorem \ref{thm:preserva_multis_emb}.  In Section \ref{sec:clasificacion} we prove  Theorem \ref{thm:clasificacion}. In Section \ref{sec:lift} we prove Theorem \ref{thm:version_souto}. In Appendix  \ref{appendix:prueba_lema}  we construct homomorphisms not induced by a multi-embedding and compute the genus of the associated surfaces.

\subsection*{Acknowledgments}
I would like to thank my supervisor, Javier Aramayona, for suggesting the problem and, specially, for the patience and guidance. I would also like to thank Juan Souto for reading through the first draft and providing useful suggestions. 

\section{Preliminaries}\label{sec:preliminares}
In this section we introduce some basic definitions and results about mapping class groups that are used throughout the article. For a thorough introduction to mapping class groups we refer the reader to the book of Farb-Margalit \cite{farb_primer_2012}. 

By a \emph{surface} we mean a connected and orientable  2-manifold with (possibly empty) compact boundary.  All  surfaces in this article are finite-type surfaces, that is, surfaces with finitely generated fundamental group. Alternatively, we may define a finite-type surface $S$ as a compact surface minus a finite set of points in its interior.  The points removed are called \emph{punctures} of $S$. The genus $g$, the number of boundary components $b$ and the number of punctures $p$, characterize the surface $S$ up to homeomorphism. 

By a \emph{curve} on $S$ we mean the homotopy class of a simple closed curve that does not bound a disk or a puncture. A curve on $S$ is \emph{non-separating} if  there exists a representative whose complement in $S$ is connected. Given two curves $a$ and $b$ the \emph{intersection number} $i(a,b)$ is the minimum number of intersection points between a representative of $a$ and a representative of $b$. Two curves are said to be \emph{disjoint} if $i(a,b)=0$.  A \emph{multicurve} on $S$ is a set of pairwise disjoint curves. Recall that if $S$ has genus $g$, $b$ boundary components and $p$ punctures, then every multicurve $\eta$ on $S$ contains at most  $3g-3+p+b$ curves.

Let $\homeo^+(S, \partial S)$ be the group of homeomorphisms of $S$ that preserve orientation and fix the boundary pointwise. Let $\homeo^+_0(S, \partial S)$ be the subgroup of elements homotopic to the identity via homotopies fixing the boundary pointwise.  The \emph{mapping class group} $\mcg(S)$ of $S$ is the quotient  $\homeo^+(S, \partial S)/\homeo^+_0(S, \partial S)$. Recall that $\mcg(S)$ has many equivalent definitions, for instance, by upgrading homeomorphisms to diffeomorphisms. We will introduce additional definitions as needed, deferring their presentation until required. 

 The group $\mcg(S)$ acts by permutations on the set of punctures of $S$. The kernel of this action is the \emph{pure mapping class group} of $S$ and we denote it  $\pmcg(S)$. Whenever we want to refer to an element $f\in \pmcg(S)$ as the homotopy class of a homeomorphism  $\phi \in \homeo^+(S, \partial S)$, we  write $f=[\phi]$.

Let $a$ be a curve on $S$. We denote the (right)  \emph{Dehn twist} about a curve $a$ by $d_a$. If the curve $a$ is non-separating, we say $d_a$ is a \emph{non-separating Dehn twist}.

A \emph{multitwist} $m_A \in \mcg(S)$ is a finite product of Dehn twists
\[ m_A = d_{a_1}^{n_1} \dots d_{a_k}^{n_k}, \]
where $A=\{a_1, \dots, a_k\}$ is a multicurve and the $n_i$ are non-zero integers. The curves $a_i\in A$ are the  \emph{components} of $m_A$.  We allow the set of components to be empty, in which case the multitwist is the identity. Notably, multitwists may have roots. We say $f\in \mcg(S)$ is a \emph{proper root of a multitwist} if $f$  is not a multitwist itself, but $f^k$ is  a multitwist for some $k\in \mathbb{N}$.  The \emph{degree} of the proper root $f$ is the smallest $k>0$ such that $f^k$ is a multitwist. For instance, finite order elements are proper roots of the identity. For further examples, we refer the reader to  \cite{margalit_dehn_2009} and \cite{rajeevsarathy_roots_2018}.

An  element of the mapping class group  $f\in \mcg(S)$ is said to be  \emph{reducible} if there exists a multicurve  $\eta$ such that $f$ fixes $\eta$  setwise, and $\eta$ is said to be a \emph{ reducing multicurve} for $f$. If $f$ is not reducible, we  say $f$ is  \emph{irreducible}. An element $f\in \mcg(S)$ \emph{fixes no curve} if $f(c)\ne c$  for every curve $c$. Note that fixing a curve implies reducibility, but the converse does not hold, that is, there are reducible elements that fix no curve.

Now, we extend the notions of reducibility, irreducibility and fixing no curve to homomorphisms. A map $\varphi:\pmcg(S)\to \pmcg(S')$ is \emph{reducible} if there exists a multicurve $\eta$ on $S'$ such that $\eta$ is fixed setwise by every element in the image of $\varphi$.  A homomorphism $\varphi$ is \emph{irreducible} if it is not reducible. Analogously, the homomorphism $\varphi$ \emph{fixes no curve} if for every curve $c$ on $S'$ there exists $f\in \varphi(\pmcg(S))$ such that $f(c)\ne c$. As before, a reducible homomorphism might fix a curve or fix no curve, but an irreducible homomorphism necessarily fixes no curve. 
 
 \subsection{Basic properties}
 Let $S$ be an orientable finite-type surface of genus $g\geq 1$. The Dehn-Lickorish Theorem \cite{lickorish_finite_1964} asserts that $ \pmcg(S)$ is finitely generated by Dehn twists. For surfaces of genus $g\geq 3$, we use the convenient generating set produced by Humphries  \cite{humphries_generators_1979}, namely, the set of non-separating Dehn twists $\mathcal{H}=\{d_c|\:c\in H\}$ where $H$ is the set of curves depicted in Figure \ref{fig:humphrey_gen}. In general, we say a set $\mathcal{H'}\subset \pmcg(S)$ is a \emph{Humphries generating set} if  $\mathcal{H'}$ is conjugate to $\mathcal{H}$.
 
 \begin{figure}[h]
 	\begin{center}
 		\includegraphics[width=0.7\linewidth]{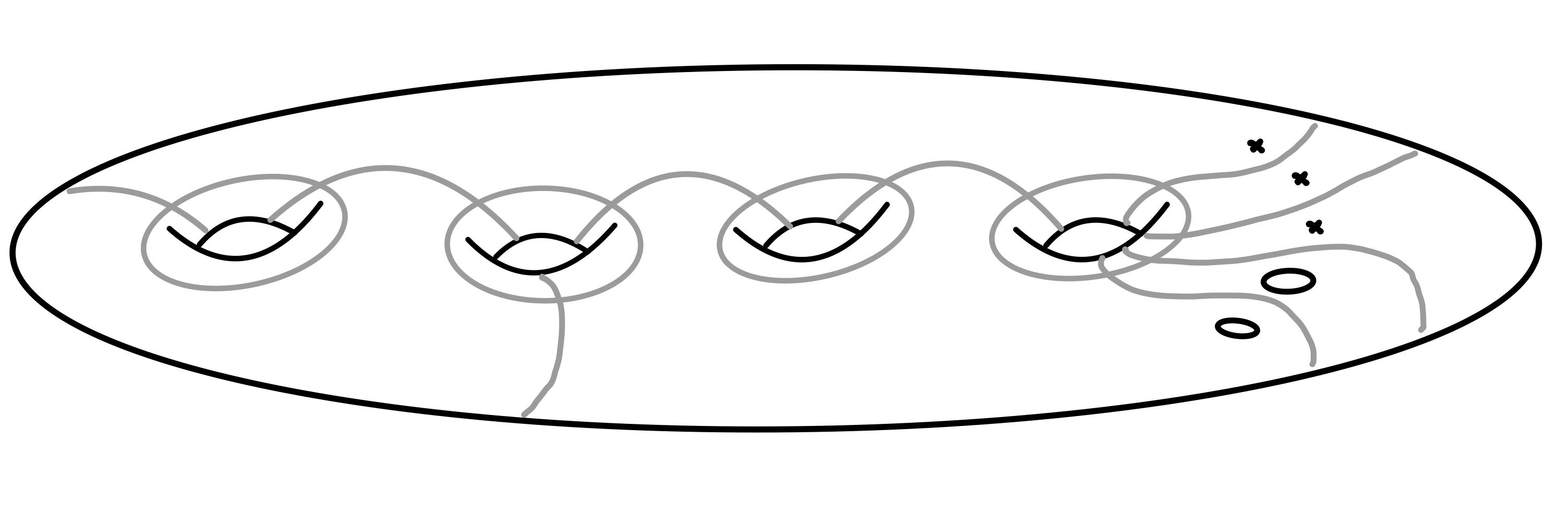}
 		\caption{Curves in a Humphries generating set for a surface of genus four, with two boundary components and three punctures.}
 		\label{fig:humphrey_gen}
 	\end{center}
 \end{figure}

Mumford \cite{mumford_abelian_1967} showed that the abelianization of $\pmcg(S)$ is finite for surfaces of genus $g\geq 2$. Powell \cite{powell_two_1978} improved on this result by showing that the abelianization is actually trivial if $g\geq 3$. \new{In general, to study the action of a group $G\subset \pmcg(S)$ on the curves of $S$ it is useful to consider the abelianization of $G$.  As an example, note that if $G$ does not surject onto $\mathbb{Z}/2\mathbb{Z}$ and fixes a curve, then $G$ fixes the curve with orientation. Similar arguments are used below and often require the theorem of Mumford-Powell, so we state it here for later reference.}

\begin{theorem}[\cite{mumford_abelian_1967}, \cite{powell_two_1978}]\label{thm:abelianization_pmcg}
	Let $S$ be an orientable finite-type surface of genus $g$. If $g=2$, then the abelianization of $\pmcg(S)$ is $\mathbb{Z}/10\mathbb{Z}$. If $g\geq 3$, then $\pmcg(S)$  has trivial abelianization.
\end{theorem}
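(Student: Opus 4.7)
The plan is to show first that the abelianization is cyclic, generated by the image $x$ of any non-separating Dehn twist, and then to determine the order of $x$ using relations in $\pmcg(S)$.

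I would begin by invoking the Dehn--Lickorish theorem to present $\pmcg(S)$ by Dehn twists. The change-of-coordinates principle shows that, for $g\geq 1$, any two non-separating simple closed curves lie in the same $\pmcg(S)$-orbit, so all non-separating Dehn twists are conjugate and have a common image $x\in H_1(\pmcg(S);\mathbb{Z})$. Every remaining Dehn twist---separating, boundary-parallel, or puncture-surrounding---can be expressed as a product of non-separating ones via chain relations in a suitable subsurface, so its class in the abelianization is an integer multiple of $x$. Hence $H_1(\pmcg(S);\mathbb{Z})$ is cyclic, generated by $x$.

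For $g\geq 3$, I would then embed a four-holed sphere $\Sigma\subset S$ such that all seven curves of its lantern configuration are non-separating in $S$; the hypothesis $g\geq 3$ is precisely what allows this. Abelianizing the lantern relation
\[d_{b_1}d_{b_2}d_{b_3}d_{b_4}=d_{a_1}d_{a_2}d_{a_3}\]
yields $4x=3x$, hence $x=0$. Therefore $H_1(\pmcg(S);\mathbb{Z})=0$, uniformly in the number of boundary components and punctures.

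The main obstacle is the case $g=2$, where no lantern with all curves non-separating embeds in $S_2$. To obtain an upper bound on the order of $x$, I would combine several chain relations in $S_2$: a chain relation applied to a maximal chain of non-separating curves, together with further chain relations that rewrite the resulting separating twists via $2$-chains inside one-holed tori, eventually produces a relation of the form $10x=0$. To show the order is exactly $10$---and not a proper divisor---I would construct an explicit surjection $\pmcg(S_2)\to\mathbb{Z}/10\mathbb{Z}$, most cleanly via the Birman--Hilden correspondence between $\mcg(S_2)$ and the mapping class group of a six-punctured sphere (whose abelianization is accessible through its braid-group description), using that the hyperelliptic involution is central in $\mcg(S_2)$. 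The delicate point throughout is the bookkeeping of which auxiliary separating twists arising in chain relations become trivial in $\pmcg(S_2)$ and which contribute additional relations; this is what makes the exponent $10$ rather than some proper multiple appear.
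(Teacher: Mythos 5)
This statement is a background result that the paper does not prove at all --- it is quoted from Mumford and Powell (with the punctured/bounded cases really due to later work of Harer and Korkmaz) and used as a black box. So there is no in-paper argument to compare against; I can only assess your reconstruction, which is the standard modern proof (as in Farb--Margalit, \S 5.1.2). The $g\geq 3$ part is complete and correct: the Euler characteristic count shows $g\geq 3$ is exactly what is needed to embed a lantern with all seven curves non-separating, and abelianizing $d_{b_1}d_{b_2}d_{b_3}d_{b_4}=d_{a_1}d_{a_2}d_{a_3}$ kills the generator. One small caveat in your reduction to a cyclic abelianization: a chain relation rewrites $d_c$ for a separating curve $c$ only when one side of $c$ is an unpunctured one-holed subsurface, so it does not directly handle curves with punctures or boundary on both sides; the cleaner statement to invoke is that $\pmcg(S)$ is generated by non-separating twists for $g\geq 2$.

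The genuine gap is in the genus-$2$ case. For the \emph{closed} surface your plan works and can even be streamlined: a $5$-chain gives $30x=0$ and a $4$-chain gives $40x=0$, hence $10x=0$, and the Birman--Hilden presentation of $\mcg(S_2)$ (whose relations abelianize to $30x=0$ and $20x=0$ and nothing more) pins the order at exactly $10$. But the theorem as stated covers all finite-type genus-$2$ surfaces, and there the bookkeeping you defer is not merely delicate --- it changes the argument. With $n\geq 2$ punctures the boundary of a $4$-chain neighborhood surrounds several punctures and its twist is nontrivial in $\pmcg$, so the relation only reads $40x=[d_\partial]$ with $[d_\partial]$ not yet known to be a multiple that helps; similarly the closed-surface presentation and the hyperelliptic involution are unavailable when $S$ has boundary (where $\mcg(S)$ is torsion-free). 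Your proposal as written establishes the closed case and gestures at, but does not deliver, the general one; to close it you would need either Korkmaz's computation for punctured and bounded surfaces or an explicit stabilization argument relating $H_1(\pmcg(S_{2,n}^b))$ to $H_1(\mcg(S_2))$.
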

 
 The next result we introduce follows from a combination of the Nielsen realization theorem (proved by Kerckhoff \cite{kerckhoff_nielsen_1983}) and the Hurwitz's automorphism theorem. 
 
 \begin{theorem}[\cite{kerckhoff_nielsen_1983}]\label{thm:subgrupos_finitos}
 	Let $S$ be a orientable finite-type surface of genus $g\geq 2$. Every finite subgroup $H< \pmcg(S)$ has order  $|H|\leq 84(g-1)$.
 \end{theorem}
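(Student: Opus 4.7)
The plan is to reduce to the case of a closed surface and then combine Kerckhoff's Nielsen realization theorem with Hurwitz's automorphism theorem, exactly as the paragraph preceding the statement suggests. First I would let $\bar{S}$ denote the closed surface of genus $g$ obtained from $S$ by capping every boundary component with a disk and filling in every puncture. Since every element of $\pmcg(S)$ fixes the boundary pointwise and preserves each puncture, it extends over the new disks and added points to a well-defined mapping class of $\bar{S}$, yielding a natural homomorphism $\Phi\colon\pmcg(S)\to\mcg(\bar{S})$.

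The central step is to verify that $\ker\Phi$ is torsion-free. I would factor $\Phi$ as a composition of elementary homomorphisms of two kinds: capping a single boundary component with a disk, whose kernel is the infinite cyclic group generated by the corresponding boundary Dehn twist; and filling in a single puncture, whose kernel, by the Birman exact sequence, is the fundamental group of the intermediate surface based at that point. Because every intermediate surface has genus $g\geq 2$ (in particular, negative Euler characteristic), the relevant fundamental groups are all torsion-free, and so is each $\mathbb{Z}$ generated by a boundary twist. A short induction on the number of elementary steps then shows that $\ker\Phi$ is torsion-free, so $\Phi$ restricts to an injection on the finite subgroup $H$ and $|H|=|\Phi(H)|$.

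To finish, I would apply Nielsen realization to the finite subgroup $\Phi(H)<\mcg(\bar{S})$ to obtain a hyperbolic metric on $\bar{S}$ for which $\Phi(H)$ acts by orientation-preserving isometries. Hurwitz's automorphism theorem, applied to this closed surface of genus $g\geq 2$, then gives $|\Phi(H)|\leq 84(g-1)$, which is exactly the desired bound.

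The main obstacle is the torsion-freeness of $\ker\Phi$: once one has the Birman and capping exact sequences at hand and observes that surface groups of negative Euler characteristic are torsion-free, the rest of the proof is a direct appeal to the two named theorems.
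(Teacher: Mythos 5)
Your argument is correct and is exactly the route the paper intends: the paper states this result without proof, attributing it to the combination of Kerckhoff's Nielsen realization theorem and Hurwitz's automorphism theorem, and your reduction to the closed case via torsion-freeness of the capping/Birman kernels is the standard way to make that combination precise. (One cosmetic point: capping a boundary component with an honest disk in a single step has kernel larger than $\mathbb{Z}$ --- one caps with a once-punctured disk, with kernel $\langle d_\beta\rangle\cong\mathbb{Z}$, and then fills the puncture via the Birman sequence; since these are exactly your two elementary operations, the composite kernel is still an iterated extension of torsion-free groups and your conclusion stands.)
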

 
 Nakajima \cite{nakajima_abelian_1987} and Maclachlan \cite{maclachlan_abelian_1965} specialized the previous theorem to  the case of finite abelian subgroups.
 
 \begin{theorem}[\cite{nakajima_abelian_1987}, \cite{maclachlan_abelian_1965}]\label{thm:subgrupos_finitos_abelianos}
 	Let $S$ be an orientable finite-type surface of genus $g\geq 2$. Then, every finite abelian subgroup  $H<\mcg(S)$ has order $|H|\leq 4g+4$.
 \end{theorem}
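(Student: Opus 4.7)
The plan is to invoke Nielsen realization to put $H$ into standard form, apply Riemann--Hurwitz to the quotient $S/H$, and exploit the abelian hypothesis to tighten the generic Hurwitz bound $84(g-1)$ (from Theorem~\ref{thm:subgrupos_finitos}) down to $4g+4$. First, by Kerckhoff's Nielsen realization theorem, $H$ may be realized as a group of holomorphic automorphisms of some Riemann surface structure on $S$. By capping boundary components and filling punctures with marked points, one reduces to the case where $S$ is a closed surface of genus $g\geq 2$; finite abelian subgroups are well-behaved under these reductions because a conformal action extends across filled punctures.

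Second, let $\pi:S\to Y=S/H$ be the quotient, where $Y$ is a closed Riemann surface of genus $h$, and suppose $\pi$ is branched over $r$ points with branching indices $m_1,\dots,m_r$. Since $H$ is abelian, the stabilizer of each preimage of a branch point is cyclic, and each $m_i$ divides the exponent of $H$. The Riemann--Hurwitz formula reads
\[
2g - 2 \;=\; |H|\left(2h - 2 + \sum_{i=1}^{r}\left(1 - \frac{1}{m_i}\right)\right),
\]
and the parenthesized factor is strictly positive because $g\geq 2$.

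Third, one carries out the case analysis on $(h,r)$. The cases $h\geq 2$, and $h=1$ with $r\geq 1$, are immediate and give bounds well below $4g+4$. The essential range is $h=0$ with $r\in\{3,4\}$, the values $r\leq 2$ being excluded by positivity. Here the abelian hypothesis provides the crucial extra input: the monodromy elements $c_1,\dots,c_r\in H$ around the branch points generate $H$, have orders $m_i$, and satisfy $c_1\cdots c_r = 1$; thus $H$ is a quotient of $\bigoplus_i \mathbb{Z}/m_i\mathbb{Z}$ by this single relation, which forces, for example, $\mathrm{lcm}(m_i,m_j)$ to equal the exponent of $H$ for any two indices. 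Combining these divisibility constraints with Riemann--Hurwitz yields $|H|\leq 4g+4$, with equality attained in the case $h=0$, $r=3$, $(m_1,m_2,m_3)=(2,2g+2,2g+2)$.

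The main obstacle is the combinatorial optimization over admissible branching data in the sphere case: one must show that no exotic triple or quadruple of branching indices, compatible with the abelian relation $c_1\cdots c_r=1$, can beat the extremal configuration. This bookkeeping, together with the interplay between Riemann--Hurwitz and the abelian divisibility constraints, is the technical heart of the argument and is where the sharpness of $4g+4$ (versus the weaker cyclic-only bound $4g+2$ or the Hurwitz bound $84(g-1)$) is pinned down.
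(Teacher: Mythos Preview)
The paper does not prove this theorem; it is stated as a background result, attributed to Nakajima and Maclachlan, and invoked later without proof. So there is no argument in the paper to compare your attempt against.

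That said, your sketch follows the classical route of those original references: Nielsen realization, reduction to the closed case, Riemann--Hurwitz for the quotient $S\to S/H$, and a case analysis on the signature $(h;m_1,\dots,m_r)$ exploiting the abelian monodromy relation $c_1\cdots c_r=1$ when $h=0$. Your identification of the extremal signature $(0;2,2g+2,2g+2)$ yielding $|H|=4g+4$ is correct, as is the contrast with Wiman's cyclic bound $4g+2$.

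Two minor remarks. First, the boundary case is vacuous (Remark~\ref{rmk:frontera_implica_no_subgrupos_finitos}: $\mcg(S)$ is torsion-free when $\partial S\neq\emptyset$), so no capping is needed there. Second, your assertion that $\mathrm{lcm}(m_i,m_j)$ equals the exponent of $H$ for \emph{any} two indices is justified when $r=3$ (any two of the $c_i$ generate $H$) but not in general for $r\geq 4$; the correct constraint there is only that any $r-1$ of the $c_i$ generate, giving $m_\ell\mid\mathrm{lcm}(m_i:i\neq\ell)$. This is a wording issue rather than a genuine gap, since the $r\geq 4$ branch produces strictly smaller groups anyway, but the combinatorics there require their own short check.
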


For later use, we record the following two properties.
 
 \begin{remark}\label{rmk:frontera_implica_no_subgrupos_finitos}
 	Let $S$ be an orientable finite-type surface with boundary, then $\mcg(S)$ has no finite order elements.  
 \end{remark}
 
 \begin{remark}\label{rmk:puntos_marcados_subgrupos_finitos_ciclicos}
 	Let $S$ be an orientable finite-type surface with at least one puncture, then every finite subgroup of $\pmcg(S)$ is cyclic. 
 \end{remark}

 \subsection{Multitwists}
 In this section we collect some facts about multitwists, which apply in particular to Dehn twists.
 
 Consider two multitwists 
 \begin{align*}
 	& m_A = d_{a_1}^{n_1}d_{a_2}^{n_2}\ldots d_{a_k}^{n_k} \\
 	& m_B = d_{b_1}^{m_1}d_{b_2}^{m_2}\ldots d_{b_{l}}^{m_l},
 \end{align*}
 where $A=\{a_1, \ldots, a_k\}$ and $B=\{b_1, \ldots, b_l\}$ are the components of $m_A$ and $m_B$, respectively. The multitwists $m_A$ and $m_B$ commute if only if the components are pairwise disjoint, i.e, $i(a,b)=0$ for every $a\in A$ and $b\in B$. 
 
 A standard result  \cite[Lemma 3.17]{farb_primer_2012} ensures that the multitwists $m_A$ and $m_B$  satisfy $m_A=m_B$ if and only if there exists a bijection $\sigma: \{1, \ldots, k\} \to \{1, \ldots, l\} $ such that 
 \[a_i = b_{\sigma(i)} \; \text{  and  } \; n_i = m_{\sigma(i)}.   \]
 In words, two multitwists are the same if and only if both have the same components with the same powers. As a consequence, for any $p\in\mathbb{Z}\setminus \{0\}$ we have that  $m_A^p =m_B^p$ if and only if $m_A=m_B$. 
 
 The conjugate of a multitwist is also a multitwist. In fact, an element $f\in \mcg(S)$ conjugates the multitwist   $m_A$ to
 \[ f \cdot m_A \cdot f^{-1}= m_{f(A)} = d_{f(a_1)}^{n_1}d_{f(a_2)}^{n_2}\ldots d_{f(a_k)}^{n_k}. \]
 Recall that $\pmcg(S)$ acts transitively on the set of non-separating curves and thus any two non-separating Dehn twists are conjugate. To simplify notation, we write in the sequel   $g^{f} =  f \cdot g \cdot f^{-1}$ for any two elements $f, g\in\mcg(S)$.
 
To conclude this section, we  present a characterization of braided multitwists proved in \cite{de_pool_braided_2024}.

 \begin{theorem}[\cite{de_pool_braided_2024}]\label{lema:trenzas}
 	Let $S$ be an orientable surface. If $m_A, m_B\in \mcg(S)$ are two multitwists satisfying the braid relation $m_Am_Bm_A = m_Bm_Am_B$, then  
 	\begin{align*}
 		m_ A&= d_{a_1}^{n_1}\dots d_{a_k}^{n_k} m_C \\ 
 		m_ B&= d_{b_1}^{n_1}\dots d_{b_k}^{n_k} m_C, \\ 
 	\end{align*}
 	where $n_i \in \{1, -1\}$, the curves $a_1,\dots, a_k,b_1,\dots, b_k$ are pairwise disjoint except for $i(a_i,b_i)=1$ and $m_C$ is a common multitwist fixing every curve $a_i$ and $b_j$.
 \end{theorem}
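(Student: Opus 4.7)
The plan is to extract strong structural information by rewriting the braid relation as a conjugation identity. From $m_A m_B m_A = m_B m_A m_B$ one gets
\[
(m_A m_B)\, m_A\, (m_A m_B)^{-1} = m_B.
\]
Because conjugation by $f\in\mcg(S)$ sends a multitwist $\prod d_{a_i}^{n_i}$ to $\prod d_{f(a_i)}^{n_i}$, and two multitwists coincide only when their components and exponents match (both facts recalled just above), this identity immediately yields a bijection $\sigma:A\to B$ given by $\sigma(a)=(m_A m_B)(a)$, together with equality of the corresponding exponents. In particular $|A|=|B|$, and all of the data of $(m_A,m_B)$ is encoded in $\sigma$ and this common exponent function.

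Next I would identify the common multitwist $m_C$. For $a\in A\cap B$ the curve $a$ is disjoint from every component of $A\cup B$, so both $m_A$ and $m_B$ fix it and $\sigma(a)=a$; conversely, any fixed point of $\sigma$ lies in $B$ by definition of the codomain, hence in $A\cap B$. Thus the fixed set of $\sigma$ is exactly $C:=A\cap B$, and calling $m_C$ the multitwist supported on $C$ with the (well-defined) common exponents one can factor
\[
m_A = m_{A\setminus C}\cdot m_C, \qquad m_B = m_{B\setminus C}\cdot m_C,
\]
where $m_C$ commutes with both factors and $\sigma$ restricts to a fixed-point-free bijection $A\setminus C\to B\setminus C$. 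Labelling $A\setminus C=\{a_1,\ldots,a_k\}$ and $b_i=\sigma(a_i)$, the two multitwists now have the form required by the statement as soon as one establishes $i(a_i,b_i)=1$, $i(a_i,b_j)=0$ for $i\neq j$, and that the common exponents $n_i$ lie in $\{1,-1\}$.

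For the last step the idea is to exploit the identity $m_A(m_B(a_i))=b_i$. Since $b_i$ is a single simple closed curve and $m_A$ acts by twists along curves disjoint from each other, one can analyse how $m_B$ must act on $a_i$ in order that a subsequent application of $m_A$ collapses it onto the single curve $b_i$: this forces $m_B(a_i)$ to meet $b_i$ exactly once and be disjoint from the other $b_j$'s, which in turn forces $a_i$ to have intersection number one with a unique $b_j$ (necessarily $b_i$). Once this pairing is in place, the braid relation restricts to a regular neighbourhood of $a_i\cup b_i$, which is a one-holed torus, and the classical braid relation $d_a d_b d_a=d_b d_a d_b$ for curves with $i(a,b)=1$ forces $n_i\in\{1,-1\}$.

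The hard part will be the intersection-pattern analysis in the final step: ruling out the possibility that a single $a_i$ intersects several $b_j$'s, or meets $b_i$ with multiplicity at least two. Making this rigorous requires tight control of how geometric intersection numbers evolve under products of multitwists—via subsurface projections, curve-graph distance estimates, or direct Penner-type bounds—and I expect this to be the technical core of the argument, as indeed it is in \cite{de_pool_braided_2024}.
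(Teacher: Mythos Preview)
The paper does not contain a proof of this theorem: it is quoted from the author's earlier work \cite{de_pool_braided_2024} and used here as a black box. There is therefore no ``paper's own proof'' to compare your proposal against.

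That said, your outline is a reasonable first reduction and matches the standard opening move for such results: rewriting the braid relation as the conjugation identity $(m_Am_B)m_A(m_Am_B)^{-1}=m_B$, reading off the exponent-preserving bijection $\sigma:A\to B$, and splitting off the common part $C=A\cap B$ as the fixed-point set of $\sigma$. Those steps are correct.

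However, by your own admission the proposal is not a proof: the entire content of the theorem lies in the intersection-pattern analysis you defer at the end. The sentence ``this forces $m_B(a_i)$ to meet $b_i$ exactly once and be disjoint from the other $b_j$'s'' is an assertion, not an argument, and the heuristic you offer (``$m_A$ acts by twists along curves disjoint from each other, so one can analyse how $m_B$ must act\ldots'') does not by itself rule out configurations where $a_i$ meets several $b_j$'s or meets $b_i$ with higher multiplicity. Establishing this rigorously is exactly what \cite{de_pool_braided_2024} does, and you have not reproduced that work here. So what you have written is an accurate roadmap, but not a self-contained proof.
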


 \subsection{Some examples of homomorphisms}\label{sec:homs}
 
 A natural starting point to characterize maps $\varphi:\pmcg(S)\to \pmcg(S')$ is to consider  some examples. Hence, we introduce  well-known constructions of homomorphisms between pure mapping class groups. 
 
 Let $S\subset S'$ be a closed subsurface of $S$. There exists a natural homomorphism known as the \emph{inclusion homomorphism}
\begin{equation}\label{eq:hom_inclusion}
	\hat{\iota}:\pmcg(S) \to \pmcg(S').
\end{equation}
The map $\hat{\iota}$ is defined as follows: For every class $[\phi] \in \pmcg(S)$, consider the homeomorphism $\hat{\phi}:S'\to S'$ acting as $\phi$ on $S$  and acting as the identity on  $S'\setminus S$; then, the map $\hat{\iota}$ is given by the equation  $\hat{\iota}([\phi]) = [\hat{\phi}]$. A result \cite[Lemma 3.16]{farb_primer_2012} ensures that $\hat{\iota}$ is a well-defined homomorphism with kernel generated by Dehn twists $d_a$ for each curve $a\in \partial S$ bounding a puncture in $S'$  and by multitwists $d_bd_c^{-1}$,  where $b, c\in \partial S$ are homotopic curves  in $S'$.

A special case of the inclusion homomorphism is the \emph{boundary deletion map}. Let $S$ be a surface with boundary. Let $S'$ be the surface obtained by gluing to $S$  a punctured disk along a boundary curve $a \in \partial S$. The inclusion $\iota: S\hookrightarrow S'$ induces a homomorphism $\hat{\iota}:\pmcg(S)\to \pmcg(S')$ and the kernel of $\hat{\iota}$ is the group $\mathbb{Z}=\langle d_a \rangle$.

Let $S$ be a surface and $\eta$ a multicurve on $S$. The stabilizer  $\centralizer(\eta)$ of $\eta$ is the subgroup of  $\pmcg(S)$  fixing the multicurve $\eta$ setwise. We denote  $\centralizer_0(\eta) \leq \centralizer(\eta)$ the subgroup of elements that fix each curve  $c\in \eta$ with orientation. There exists a natural homomorphism from $\centralizer_0(\eta)$ obtained by `cutting' $S$ along the curves in $\eta$. To be more precise, for each curve $c\in \eta$  take $\cerrado{c}$ to be a closed regular neighborhood of $c$. The components of $S\setminus \bigcup_{c\in \eta} \cerrado{c} $ are open subsurfaces $R_i\subset S$. Then, the \emph{cutting homomorphism} is defined as the map
\begin{equation}\label{eq:hom_cortado}
	\cut_\eta: \centralizer_0(\eta) \to \prod_{i=1}^m \pmcg(R_i),
\end{equation}
where each $f\in \centralizer_0(\eta)$ is sent to  $(f_1,\dots, f_n)$ and $f_i$  is the restriction of  $f$ to the subsurface $R_i$.  The kernel of $\cut_\eta$ is the group $\mathbb{Z}^{|\eta|}$ generated by the Dehn twists $d_a$ for $a\in \eta$.
\begin{remark}
	If $f\in \centralizer_0(\eta)$ is a proper root of degree $k$ of a multitwist and the components of $f^k$ are contained in $\eta$, then $\cut_\eta(f)$ has order $k$.
\end{remark}

Let $S$ be a surface with punctures. The \emph{Birman exact sequence} is the following short exact sequence
\begin{equation}\label{eq:hom_birman}
	1 \to \pi_1(\overline{S},\{*\}) \to \pmcg(S) \xrightarrow{\forget} \pmcg(\overline{S})\to 1,
\end{equation}
where $\overline{S} \setminus\{*\}=S$. In words, $\overline{S}$ is the surface obtained by `filling' the puncture $*$ on $S$. The \emph{forgetful} map $\forget$ is defined as  $\forget([\phi]) = [\overline{\phi}]$, where $\overline{\phi}$  acts as $\phi$ on $S$ and sends the new point $*$ to itself. It is a theorem  \cite[Chapter 4.2]{farb_primer_2012} that  $\forget$ is a well-defined homomorphism and the kernel is  isomorphic to $\pi_1(\overline{S},\{*\})$.

The next  exact sequence is a consequence of the Birman-Hilden theory \cite{birman_isotopies_1973} and concerns centralizers of finite order elements (see also  \cite[Theorem 2.8]{aramayona_homomorphisms_2013}). Let $h\in \mcg(S)$  be a finite order element and denote  $\centralizer(h)$ its centralizer in $\mcg(S)$. Let $\sigma:S\to S$  be a finite order  homeomorphism representing $h$  and consider the surface  $Q$ obtained by removing the singular points of the quotient  $S/\langle \sigma\rangle$.  We have the following exact sequence
\begin{equation}\label{eq:hom_birman_hilden}
	1 \to \langle h\rangle \to \mathcal{Z}(h) \to \mcg^\pm(Q).
\end{equation}
Here $\mcg^\pm(Q)$ denotes the group of homeomorphisms of $Q$ (possibly inverting the orientation) modulo homotopy.  \new{The genus and punctures of the surface $Q$ can actually be controlled in terms of the genus of $S$:}

\begin{lemma}[{\cite[Lemma 7.3]{aramayona_homomorphisms_2013}}]\label{lemma:birman_hilden_bounds}
	\new{Let $S$ be a surface of genus $g\geq 0$ \neww{with $p\geq 0$ punctures}  and let $h: S \to S$ be a nontrivial diffeomorphism of prime order, representing an element of \neww{$\pmcg(S)$. If $Q$ is the
	surface obtained by removing the singular points from the
	quotient $S/\langle h\rangle$, then $Q$ has genus $\tilde{g}$ and $\tilde{p}$ punctures satisfying $p\leq \tilde{p}\leq 2g+2$ and 
	 $\tilde{g}\leq (2g+2-\tilde{p})/4$.}
	}
\end{lemma}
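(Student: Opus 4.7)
The plan is to fill in the punctures of $S$, apply Riemann--Hurwitz to the resulting branched cover of closed surfaces, and read off both inequalities from the resulting identities.

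First I would extend the setup. Let $\bar{S}$ denote the closed surface obtained from $S$ by filling in its $p$ punctures. Since $h$ represents a pure mapping class, $h$ fixes each puncture, so it extends to a diffeomorphism $\bar{h}\colon\bar{S}\to\bar{S}$ of prime order $n$ fixing each of the $p$ filled-in points. Let $\bar{Q}=\bar{S}/\langle\bar{h}\rangle$ be the quotient closed surface and $\bar{F}$ the total number of fixed points of $\bar{h}$ on $\bar{S}$. Because $n$ is prime, every nontrivial power of $\bar{h}$ generates the whole group, so every fixed point has full stabilizer and the branched cover $\bar{S}\to\bar{Q}$ has ramification index $n$ at all $\bar{F}$ ramified points.

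Next I would identify $\tilde{p}$ with $\bar{F}$. The surface $S/\langle h\rangle$ is $\bar{Q}$ minus the $p$ puncture images, and $Q$ is obtained by further removing the $\bar{F}-p$ interior orbifold singular points; hence $Q$ is $\bar{Q}$ minus $\bar{F}$ points and $\tilde{p}=\bar{F}\geq p$. Riemann--Hurwitz then yields
\[
2-2g \;=\; n(2-2\tilde{g})-(n-1)\tilde{p},
\]
which rearranges to $\tilde{g}=\bigl(2n-2+2g-(n-1)\tilde{p}\bigr)/(2n)$. The bound $\tilde{p}\leq 2g+2$ is immediate from $\tilde{g}\geq 0$ and $n\geq 2$, since this forces $\tilde{p}\leq 2+2g/(n-1)\leq 2g+2$. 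A short algebraic manipulation shows that the desired inequality $\tilde{g}\leq(2g+2-\tilde{p})/4$ is equivalent to $(n-2)(2-2g-\tilde{p})\leq 0$; this is an equality when $n=2$ and, for $n\geq 3$, reduces to $\tilde{p}+2g\geq 2$.

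The only delicate step I anticipate is checking this last inequality when $n\geq 3$. It is immediate when $g\geq 1$. When $g=0$, $\bar{S}=S^{2}$, and any nontrivial prime-order diffeomorphism of $S^{2}$ has exactly two fixed points (it is conjugate to a rotation), forcing $\bar{F}=\tilde{p}=2$ and hence $\tilde{p}+2g=2$ as needed, which closes the argument.
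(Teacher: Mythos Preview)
Your argument is correct. The paper does not actually prove this lemma; it simply quotes it from \cite[Lemma 7.3]{aramayona_homomorphisms_2013}, so there is no in-paper proof to compare against. Your Riemann--Hurwitz computation is the standard route and is what one expects the cited reference to do: fill the punctures, use that a pure mapping class fixes each puncture so the filled points are branch points, and read off the two bounds from $2-2g=n(2-2\tilde g)-(n-1)\tilde p$. The reduction of the genus inequality to $(n-2)(2-2g-\tilde p)\le 0$ is clean, and your handling of the residual case $g=0$, $n\ge 3$ via the fact that a finite-order orientation-preserving diffeomorphism of $S^2$ is conjugate to a rotation (hence has exactly two fixed points) is the right way to close it.
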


\new{Lastly, let us also record the following simple relevant fact concerning homomorphisms out of $\pmcg(S)$. }

\begin{remark}\label{rmk:trivial_homs}
	\new{
	Let $\varphi: \pmcg(S)\to G$ be a homomorphism to a group $G$. If the map $\varphi$ is nontrivial, then the image of any non-separating Dehn twist is nontrivial, since non-separating Dehn twists generate $\pmcg(S)$ and they are all conjugate to each other. }
\end{remark}

\subsection{More tools}

In this section we introduce two results regarding quotients of the mapping class group. The first one is a theorem of Berrick-Gebhardt-Paris \cite{paris_finite_2011} bounding the finite actions of $\pmcg(S)$.

\begin{theorem}[\cite{paris_finite_2011}]\label{thm:berrick_gebhardt_paris}
	Let $S$ be an orientable finite-type surface of genus $g\geq 3$. If $\pmcg(S)$ acts non trivially on a finite set $F$, then  $|F|\geq 2^{g-1}(2^g-1)$. 
\end{theorem}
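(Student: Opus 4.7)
The plan is to reduce the problem to the classical study of the minimal faithful permutation representation of the symplectic group $\mathrm{Sp}(2g, \mathbb{F}_2)$. A nontrivial action of $\pmcg(S)$ on $F$ translates into a nontrivial homomorphism $\rho: \pmcg(S)\to \mathrm{Sym}(F)$. Since all non-separating Dehn twists in $\pmcg(S)$ are conjugate and generate the group, Remark \ref{rmk:trivial_homs} implies that $\rho(d_a)$ is a nontrivial permutation whose order $k\geq 2$ does not depend on the non-separating curve $a$.

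The second step is to show that $k=2$. I would exploit the relations satisfied by Dehn twists to constrain $k$: disjoint curves yield commuting images, the braid relation holds for $\rho(d_a), \rho(d_b)$ whenever $i(a,b)=1$, and the lantern and chain relations impose further compatibility conditions among the permutations $\rho(d_a)$, all of which have the common order $k$. A careful analysis of pairs of permutations of a given order satisfying the braid relation, combined with a third twist forming a chain, should be enough to rule out any $k>2$. This algebraic combinatorics is the crux of the proof.

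Once $k=2$, the squares $d_a^2$ lie in $\ker(\rho)$ for every non-separating curve $a$. A theorem of Humphries asserts that the normal closure in $\pmcg(S)$ of $\{d_a^2 : a\text{ non-separating}\}$ coincides with the level-two congruence subgroup $\pmcg(S)[2]$, namely the kernel of the natural homomorphism
\[ \Psi: \pmcg(S)\to \mathrm{Sp}(2g, \mathbb{F}_2) \]
induced by the action on $H_1(S; \mathbb{F}_2)$. Consequently $\rho$ factors as $\rho = \overline{\rho}\circ \Psi$ for some nontrivial homomorphism $\overline{\rho}: \mathrm{Sp}(2g, \mathbb{F}_2)\to \mathrm{Sym}(F)$.

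For $g\geq 3$, the group $\mathrm{Sp}(2g, \mathbb{F}_2)$ is simple, so $\overline{\rho}$ is injective and $F$ supports a faithful action of $\mathrm{Sp}(2g,\mathbb{F}_2)$. The minimal degree of a faithful permutation representation of $\mathrm{Sp}(2g, \mathbb{F}_2)$ equals $2^{g-1}(2^g-1)$, realized by the action on quadratic forms of Arf invariant $1$ refining the symplectic pairing; this gives the desired bound $|F|\geq 2^{g-1}(2^g-1)$. The main obstacle is step two: pinning down $k=2$ directly from the mapping class group relations, since a priori one must rule out several intermediate possibilities using only information about small configurations of Dehn twists. By contrast, the final appeal to the minimal permutation degree of $\mathrm{Sp}(2g,\mathbb{F}_2)$ is classical finite group theory.
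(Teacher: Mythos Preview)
The paper does not prove this theorem; it is quoted from Berrick--Gebhardt--Paris as a black box, and the only argument supplied is the short remark reducing the punctured case to the compact case via the boundary deletion epimorphism. So there is no in-paper proof to compare against beyond that one-paragraph reduction.

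Your outline has a genuine gap at step two. It is simply false that the image of a non-separating Dehn twist must have order $2$ in every nontrivial homomorphism $\rho:\pmcg(S)\to \mathrm{Sym}(F)$. For instance, the action on $H_1(S;\mathbb{Z}/3\mathbb{Z})$ gives a surjection $\pmcg(S)\to\mathrm{Sp}(2g,\mathbb{F}_3)$ sending non-separating Dehn twists to transvections of order $3$; composing with the left action of $\mathrm{Sp}(2g,\mathbb{F}_3)$ on any coset space yields a nontrivial permutation action with $k=3$. No manipulation of braid, chain, or lantern relations can force $k=2$, because those relations hold in $\pmcg(S)$ itself and are therefore preserved by every homomorphism: whatever identities you derive among the $\rho(d_a)$ are already satisfied by order-$3$ transvections in $\mathrm{Sp}(2g,\mathbb{F}_3)$. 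The sentence ``a careful analysis \dots\ should be enough to rule out any $k>2$'' is thus not a gap to be filled but a false statement.

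Your steps three and four are the right endgame once one knows that the kernel of the action contains all $d_a^2$, and the appeal to Humphries and to the minimal permutation degree of $\mathrm{Sp}(2g,\mathbb{F}_2)$ is correct for $g\geq 3$. What is missing is a legitimate reason why a \emph{minimal} $F$ forces the kernel to contain the squares of Dehn twists; this cannot come from relations among Dehn twists alone and in the original reference requires a more structural analysis of subgroups of small index (passing through the Torelli group and the congruence quotients of $\mathrm{Sp}(2g,\mathbb{Z})$), rather than an elementwise order computation.
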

\begin{remark}
	Theorem \ref{thm:berrick_gebhardt_paris} is stated in  \cite{paris_finite_2011} only for compact surfaces with (possibly empty) boundary.  The result for punctured surfaces follows from their statement. We sketch the argument.
	
	Let $S_{g,p}^b$ be the surface of genus $g$ with $b$ boundary components and $p$ punctures. Let $S_{g}^{p+b}$ be the surface of genus $g$ with $p+b$ boundary components.  The boundary deletion map induces an epimorphism 
	\[ \pmcg(S_{g}^{b+p}) \xrightarrow{\Omega} \pmcg(S_{g,p}^b). \]
	If $\pmcg(S_{g,p}^b)$ acts non-trivially on a set $F$, then $\pmcg(S_{g}^{p+b})$ acts on $F$ via the map $\Omega$. It follows from the compact case that  $|F|\geq 2^{g-1}(2^g-1)$, as we wanted. 
\end{remark}

The next result of Bridson \cite{bridson_semisimple_2010} controls the image of a Dehn twist under homomorphisms.

\begin{theorem}[\cite{bridson_semisimple_2010}]\label{thm:bridson}
	Let $S, S'$ be orientable finite-type surfaces and let $\varphi:\pmcg(S)\to \pmcg(S')$  be a homomorphism. If $S$ has genus $g\geq 3$, the image of Dehn twist is a root of a multitwist. 
\end{theorem}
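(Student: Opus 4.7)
The plan is to show that $f := \varphi(d_a)$ has no pseudo-Anosov component in its Nielsen--Thurston canonical decomposition; by the Nielsen--Thurston classification, this implies that some power of $f$ is a multitwist, i.e., that $f$ is a root of a multitwist. Let $\eta$ be the canonical reduction system of $f$, and fix $N$ such that $f^N$ fixes each curve of $\eta$ and restricts on each component of $S'\setminus\eta$ to either the identity or a pseudo-Anosov. I would then argue by contradiction, assuming that $f^N$ restricts to a pseudo-Anosov on some component $R\subseteq S'\setminus\eta$.

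The main input is the large centralizer of $d_a$ in $\pmcg(S)$. Treat first the non-separating case; let $S_a$ be the surface obtained from $S$ by cutting $S$ along $a$ and capping the two new boundary components with once-punctured disks. Since $g\geq 3$, the surface $S_a$ has genus $g-1\geq 2$. Via the inclusion and capping homomorphisms of Section \ref{sec:homs}, $\pmcg(S_a)$ embeds into $\pmcg(S)$ with image contained in the centralizer of $d_a$; hence $\varphi(\pmcg(S_a))$ centralizes $f$. Any element centralizing $f$ preserves $\eta$, and, after passing to a finite-index subgroup, each complementary component, so restriction to $R$ lands in the centralizer of the pseudo-Anosov $f^N|_R$ in $\pmcg(R)$, which is virtually infinite cyclic. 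Composing, we obtain a homomorphism from a finite-index subgroup of $\pmcg(S_a)$ into a virtually cyclic group. Theorem \ref{thm:abelianization_pmcg} forces its image to be finite, and Theorem \ref{thm:berrick_gebhardt_paris}, combined with a direct analysis of the $\mathbb{Z}/10\mathbb{Z}$ abelianization in the borderline case $g=3$, forces this finite image to be trivial. Thus $\varphi(\pmcg(S_a))$ acts trivially on $R$.

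To conclude, I would exploit the existence of a non-separating curve $a'\subset S\setminus a$ which remains non-separating in $S$ (available since $g-1\geq 2$); the twist $d_{a'}$ lies in $\pmcg(S_a)$ and is conjugate to $d_a$ in $\pmcg(S)$, say via $h$. By the previous step $\varphi(d_{a'})$ acts trivially on $R$, while $\varphi(d_{a'})^N=\varphi(h)f^N\varphi(h)^{-1}$ is a conjugate of the pseudo-Anosov $f^N$; carefully tracking the action of $\varphi(h)$ on the canonical reduction system $\eta$ produces the desired contradiction. The separating case reduces to the non-separating case by applying the same argument inside the larger of the two subsurfaces cut off by $a$. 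The main obstacle is precisely this final step: turning the trivial action of $\varphi(\pmcg(S_a))$ on $R$ into a genuine contradiction requires controlling whether $\varphi(h)$ preserves $R$ or permutes it with another pseudo-Anosov component of $f$, and likely forces one to apply the argument simultaneously to a full collection of disjoint non-separating conjugates of $d_a$ so that the combinatorics of $\varphi$ acting on the components of $S'\setminus\eta$ rules out the pseudo-Anosov case entirely.
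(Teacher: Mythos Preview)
The paper does not prove this statement; it is quoted from Bridson \cite{bridson_semisimple_2010} as an external input, so there is no ``paper's own proof'' to compare against, only Bridson's.

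Your centralizer strategy is natural but has gaps beyond the one you already flag. A minor point first: there is no map $\pmcg(S_a)\to\pmcg(S)$ in the direction you claim---the inclusion and boundary-deletion maps of Section~\ref{sec:homs} run the other way. What you actually want is the inclusion homomorphism from $\pmcg(S\setminus N(a))$, the complement of an open regular neighbourhood, which does land in $\mathcal{Z}_0(a)$ and has genus $g-1$; with that substitution your outline proceeds. More seriously, your appeal to Theorem~\ref{thm:abelianization_pmcg} is to a \emph{finite-index} subgroup of this group, not the full group, and Theorem~\ref{thm:abelianization_pmcg} says nothing about finite-index subgroups; whether every finite-index subgroup of a mapping class group has finite abelianization is essentially the question of property~(T) and is not something you can invoke as a black box. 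One can sidestep this for $g\geq 4$ by observing that the whole perfect group $\pmcg(S\setminus N(a))$ maps into the virtually abelian group obtained by restricting to the full orbit of pseudo-Anosov components, so its image is finite without ever passing to finite index---but this still leaves you exactly at the obstacle you name: nothing yet prevents the conjugate $\varphi(d_{a'})$ from carrying its pseudo-Anosov piece on a subsurface disjoint from $R$, and sorting out how the pseudo-Anosov supports of the various conjugates of $f$ are distributed over the components of $S'\setminus\eta$ is the real content of the theorem.

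Bridson's actual argument is different in kind. Rather than analysing centralizers, he proves that whenever $\pmcg(S)$ with $g\geq 3$ acts by semisimple isometries on a complete $\mathrm{CAT}(0)$ space, every Dehn twist acts elliptically. The input is an algebraic relation among mutually conjugate Dehn twists in $\pmcg(S)$ that is incompatible with positive translation length; the conclusion for homomorphisms into $\pmcg(S')$ follows because an element with a pseudo-Anosov component has positive translation length on a suitable space. This route never passes to finite-index subgroups and never needs to track how pseudo-Anosov pieces of conjugates of $f$ spread across $S'\setminus\eta$, which is exactly why it succeeds where the naive centralizer argument stalls.
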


We devote the rest of the preliminaries to introduce homomorphisms induced by embeddings and multi-embeddings, respectively. 

\subsection{Homomorphisms induced by embeddings}\label{subsecPrelimEmb}

Let $S$ and $S'$ be two surfaces. An \emph{embedding} from $S$ to $S'$ is a continuous injective map $\iota: S\xhookrightarrow{} S'$. Fix an embedding $\iota$ and let $\phi\in \homeo_c^+(S, \partial S)$ be a compactly supported homeomorphism. We may extend $\phi$ to a homeomorphism  $\widehat{\phi}:S'\to S'$ by letting
 \begin{equation}\label{eq:ext_phi_emb}\tag{$*$}
 	\widehat{\phi}(x) = 
 	\begin{cases}
 		x & x\not \in \iota(S),\\ 
 		\iota \circ \phi \circ \iota^{-1}(x) & x\in \iota(S).
 	\end{cases}
 \end{equation}

In this way, an embedding $\iota$ naturally induces an injective homomorphism  between the compactly supported homeomorphism groups  
\begin{align*}
	\widehat{\iota}: \homeo_c^+(S, \partial S) &\xhookrightarrow{}  \homeo_c^+(S', \partial S') \\ 
	 \phi  &\mapsto  \widehat{\phi}.
\end{align*}
 In \emph{some} cases the map $\widehat{\iota}$ descends to a homomorphism between mapping class groups, which motivates the next definition.

\begin{definition}\label{def:embed_que_induce}
	An  embedding $\iota: S \to S'$  \emph{induces a map between mapping class groups} if the map $\widehat{\iota}: \pmcg(S)\to \pmcg(S')$ given by $\widehat{\iota}([\phi])= [\widehat{\phi} \,]$ is a well-defined homomorphism, where $\widehat{\phi}$ is defined as \eqref{eq:ext_phi_emb}.
\end{definition}

A source of examples for Definition \ref{def:embed_que_induce} comes from the inclusion homomorphism. Recall that if $S$ is a subsurface of $S'$ and $S$ is closed as a subset, then there is an inclusion map $\varphi: \pmcg(S)\to \pmcg(S')$ at the level of mapping class groups. The map $\varphi$ is precisely the homomorphism induced by the natural embedding $S\to S'$.

As hinted before, not every embedding induces a map between mapping class groups. We illustrate this in the following example. 

\begin{example}\label{example:emb_no_induce_mapa}
	Let $S'$ be a compact surface of genus $g\geq 2$ and let $S$ be the \emph{interior} of a genus $g-1$ subsurface. The embedding $\iota: S \hookrightarrow S'$ given by  inclusion does \emph{not} induce a homomorphism between the mapping class groups. 
\end{example}

With this example in mind, we characterize which embeddings  induce homomorphisms. 

\begin{lemma}\label{lema:emb_que_induce}
	Let $S$ and $S'$ be two orientable surface of finite-type. An embedding  $\iota:S\hookrightarrow S'$  induces a map between mapping class groups if and only if for every puncture  $p$ of  $S$  and every simple closed curve $\gamma$ homotopic to $p$, the curve $\iota(\gamma)$ bounds a disk or a once-punctured disk.
\end{lemma}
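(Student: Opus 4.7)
The plan is to prove the two implications of the equivalence separately.

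For the forward direction ($\Rightarrow$), I will suppose that $\widehat{\iota}$ descends to a well-defined homomorphism between pure mapping class groups. Let $p$ be a puncture of $S$ and $\gamma$ a simple closed curve homotopic to $p$, so that $\gamma$ bounds a once-punctured disk $D\subset S$. Since the pure mapping class group of $D$ (fixing the boundary pointwise and fixing the puncture) is trivial, the inclusion homomorphism \eqref{eq:hom_inclusion} from $\pmcg(D)$ into $\pmcg(S)$ sends $d_\gamma$ to the identity, i.e.\ $d_\gamma = 1$ in $\pmcg(S)$. On the other hand, taking a Dehn twist representative of $d_\gamma$ supported in a tubular neighborhood of $\gamma$ and applying the formula \eqref{eq:ext_phi_emb} shows at once that $\widehat{\iota}(d_\gamma) = d_{\iota(\gamma)}$. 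Therefore $d_{\iota(\gamma)}$ must be trivial in $\pmcg(S')$, which is precisely the condition that $\iota(\gamma)$ bounds a disk or a once-punctured disk in $S'$.

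For the reverse direction ($\Leftarrow$), my plan is to realise $\widehat{\iota}$ as a composition of a forgetful map and an inclusion homomorphism. Partition the punctures of $S$ into $P_1 \sqcup P_2$, where $P_1$ consists of those $p$ such that $\iota(\gamma_p)$ bounds a disk $D_p \subset S'$, and $P_2$ of those for which $\iota(\gamma_p)$ bounds a once-punctured disk $D_p \subset S'$; in each case $D_p$ is chosen on the side of $\iota(\gamma_p)$ opposite to $\iota(S)$. Set
\[
\tilde{S} := \iota(S) \cup \bigcup_p D_p,
\]
which is a closed subsurface of $S'$ abstractly homeomorphic to the surface obtained from $S$ by filling in each puncture in $P_1$ (while those in $P_2$ remain punctures, now identified with punctures of $S'$).

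Iterating the Birman exact sequence \eqref{eq:hom_birman} once for each puncture in $P_1$ produces a well-defined forgetful map $\forget_{P_1}: \pmcg(S) \to \pmcg(\tilde{S})$. Composing with the inclusion homomorphism $\pmcg(\tilde{S}) \to \pmcg(S')$ from \eqref{eq:hom_inclusion} yields a well-defined homomorphism, and unwinding the definitions confirms that it agrees, class by class, with the prescription $[\phi] \mapsto [\widehat{\phi}\,]$ coming from \eqref{eq:ext_phi_emb}. This exhibits $\widehat{\iota}$ as a composition of well-defined maps, proving that it descends to mapping class groups.

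The main obstacle I anticipate is the topological set-up of $\tilde{S}$: one has to verify that the disks $D_p$ are unambiguously defined (in particular, that the side of $\iota(\gamma_p)$ opposite to $\iota(S)$ makes sense), that they are pairwise disjoint, and that they meet $\iota(S)$ only along the boundary curves $\iota(\gamma_p)$, so that $\tilde{S}$ is a genuine closed subsurface of $S'$. Some mild care is needed in degenerate situations, such as when two different curves $\iota(\gamma_p)$ and $\iota(\gamma_{p'})$ become isotopic in $S'$, or when $\iota(S)$ is itself a small subsurface; once this bookkeeping is handled, the conclusion follows from the standard properties of the Birman sequence and the inclusion homomorphism.
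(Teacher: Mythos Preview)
Your proposal is correct and follows essentially the same approach as the paper: for the forward direction you make explicit the argument (via $d_\gamma = 1$ in $\pmcg(S)$ forcing $d_{\iota(\gamma)} = 1$ in $\pmcg(S')$) that the paper dismisses with ``it follows quickly,'' and for the reverse direction both you and the paper factor $\widehat{\iota}$ as a forgetful map followed by an inclusion homomorphism. The only organisational difference is that the paper fills in the disk-type punctures one at a time by induction (splitting into the cases where $\overline{\iota(S)}\setminus\iota(S)$ is a point or a curve), whereas you fill them all at once to form $\tilde{S}$; the topological bookkeeping you flag (disjointness of the $D_p$, possible isotopies among the $\iota(\gamma_p)$) is exactly what the paper's inductive/case-by-case treatment is absorbing.
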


If $\iota(\gamma)$ bounds a once-punctured disk, we say $\iota$ sends the puncture $p$ to a puncture in $S'$. If $\iota(\gamma)$ bounds a disk, we say $\iota$ sends the puncture $p$ to a disk. 

\begin{proof}
	If $\iota$  is an embedding inducing a map between mapping class groups, it follows quickly that every puncture is sent to a puncture or a disk. Thus, we  only prove the converse. 
	
	Consider the map $\widehat{\iota}: \pmcg(S) \to \pmcg(S')$ given by $\widehat{\iota}([\phi]) = [\widehat{\phi}]$. To check  $\widehat{\iota}$ is  well-defined, first  assume that $\iota: S\to S'$ sends every puncture to a puncture. In this case, $\iota(S)$ is a  subsurface of  $S'$ which is closed as a subset. Therefore, if $\phi_1, \phi_2\in \homeo_c^+(S,\partial S)$ are two homotopic homeomorphisms of $S$, then  $\widehat{\phi_1}$ and $\widehat{\phi_2}$ are homotopic homeomorphisms of  $S'$. It follows that  $\widehat{\iota}$  is a well-defined. To check $\widehat{\iota}$ is homomorphism it is enough to observe that $\widehat{\phi_1 \circ \phi_2} = \widehat{\phi_1} \circ \widehat{\phi_2}$. 
	
	Assume now that $\iota$ sends only one puncture $p$ on $S$ to a disk on $S'$. We consider two cases: either $\overline{\iota(S)} \setminus \iota(S)$ is a single point or it is a curve. In the first case,  we may factor $\iota$ as 
	\[ S\xhookrightarrow{F} \overline{S} \xrightarrow{\iota_1} S', \]
	 where $\overline{S}\setminus\{p\} = S$, and both $F$ and $\iota_1$  are the natural inclusions. Notice $\iota_1$ induces a map between mapping class groups. Indeed, since $\iota$ sends only one puncture to a disk, it follows $\iota_1$ sends every puncture to a puncture and therefore induces a map $\pmcg(\overline{S}) \to \pmcg(S')$. Moreover, $F$ induces the forgetful map $\forget: \pmcg(S)\to \pmcg(\overline{S})$. Since 	$\widehat{\iota}$  coincides with the composition  $\widehat{\iota}_1 \circ \forget$, it follows $\widehat{\iota}$  is well-defined.  Now, in the case  $\overline{\iota(S)} \setminus \iota(S)$  is a curve, we may again consider an appropriate  $\overline{S}$, $F$ and $\iota_1$  so that  $\widehat{\iota}$ is the composition  $\widehat{\iota}_1 \circ \forget$.
	
	To finish, if $\iota:S\to S'$ sends two or more punctures to disks, we may show $\widehat{\iota}:\pmcg(S)\to\pmcg(S')$ is well-defined by inductively repeating the argument in the previous paragraph.
\end{proof}

\begin{remark}\label{rmk:cerrada_homeoext_induce}
	If $S$ has no punctures, Lemma \ref{lema:emb_que_induce} implies that every embedding  $S\hookrightarrow S'$ induces a map between mapping class groups. 
\end{remark}

\subsection{Homomorphisms induced by multi-embeddings}\label{sec:prelim_varios_embs}

Let $S$ and $S'$ be two surfaces. A \emph{multi-embedding} from $S$ to $S'$ is a collection of disjoint   embeddings \[\mathcal{I}= \{\iota_i: S\xhookrightarrow{} S'|\: \iota_i(S)\cap \iota_j(S)=\emptyset \;\forall i\ne j\}.\]

Fix a multi-embedding $\mathcal{I}= \{\iota_i|\: i=1, \ldots, n\}$  from $S$ to $S'$  and let $\phi \in \homeo_c^+(S,\partial S)$ be a compactly supported homeomorphism. We may extend $\phi$ to a homeomorphism $\widehat{\phi}:S'\to S'$ by letting 

\begin{equation}\label{eq:ext_homeo_multiemb}\tag{$**$}
	\widehat{\phi}(x) = 
	\begin{cases}
		x & x\not \in \iota_i(S) \; \forall i,\\ 
		\iota_i \circ \phi \circ \iota_i^{-1}(x) & x\in \iota_i(S).
	\end{cases}
\end{equation}
Again, the multi-embedding $\mathcal{I}$ induces  an injective homomorphism 
\begin{align*}
	\widehat{\mathcal{I}}: \homeo_c^+(S, \partial S) &\xhookrightarrow{}  \homeo_c^+(S', \partial S'). \\ 
	\phi  &\mapsto  \widehat{\phi}.
\end{align*}
In \emph{some} cases $\widehat{\mathcal{I}}$ descends to a homomorphism  $\pmcg(S)\to \pmcg(S')$. The next two examples illustrate such homomorphisms.

\begin{example}\label{ex:hom_ind_por_varios_emb}
	Let $S'$ be a closed surface of genus $2g$ and let $S$ be a surface of genus $g-1$ with one boundary component. Consider two embeddings \[\iota_i: S\to S' \text{   for  }i=1,2\]  such that $\iota_1(S)\cap \iota_2(S)=\emptyset$. By Remark \ref{rmk:cerrada_homeoext_induce}, each $\iota_i$ induces a homomorphism $\widehat{\iota}_i$ between mapping class groups. Notice that elements in the  image of $\widehat{\iota}_1$  commute with elements in the image of $\widehat{\iota}_2$, since they have disjoint support. Therefore, we may consider the product homomorphism $\prod \widehat{\iota}_i: \pmcg(S)\to \pmcg(S')$ defined by $\prod \widehat{\iota}_i (f)= \widehat{\iota}_1(f) \widehat{\iota}_2(f)$. 
	
	We may also construct the homomorphism $\prod \widehat{\iota}_i$ as follows. Let   $\phi\in \homeo_c^+(S)$ be a homeomorphism and consider the extension $\widehat{\phi}:S'\to S'$ defined as in \eqref{eq:ext_homeo_multiemb}. As before,  $\widehat{\phi} \in \homeo^+_c(S')$ and  we may define the map  $\widehat{\mathcal{I}}([\phi]) = [\widehat{\phi}]$. It is now  possible to check that $\widehat{\mathcal{I}}$ is exactly the map  $\prod \widehat{\iota}_i$. 
\end{example}

In the previous example, the induced map   $\widehat{\mathcal{I}}:\pmcg(S)\to \pmcg(S')$ coincides with the product map $\prod \widehat{\iota}_i$. In general, this might not be the case. For instance, a multi-embedding $\mathcal{I}=\{\iota_1,\, \iota_2\}$ might induce a homomorphism $\pmcg(S)\to \pmcg(S')$, while each $\iota_i$ separately does not induce a homomorphism. We illustrate this case in Example  \ref{example:multi_emb_induce_no_prod}. Before doing so, we need a couple of  definitions. 

Fix two oriented surfaces $S, S'$ and let $\iota: S \to S'$ be an embedding. We say $\iota$ \emph{preserves} orientation if $\iota$ sends the orientation of $S$ to the orientation of $S'$.  Otherwise, we say $\iota$ \emph{reverses}  orientation. Additionally, we say two embeddings  $\iota_1, \iota_2 :S \to S'$  \emph{induce opposite orientations} if one of them preserves and the other one reverses the orientation. 

\begin{example}\label{example:multi_emb_induce_no_prod}
	Let $S'$ be a closed surface of genus  $2g$ and let $S$ be a once-punctured  surface with genus $g$ and no boundary. Consider two  embeddings  \[\iota_i: S\to S' \text{   for  }i=1,2\] such that  $\iota_1,\, \iota_2$ induce opposite orientations and  $\iota_1(S)\cap \iota_2(S)=\emptyset$. 
	
	Note that neither $\iota_1$ nor $\iota_2$ induce a map between mapping class groups. However, the multi-embedding $\mathcal{I}=\{\iota_1, \, \iota_2\}$  induces a map \[\widehat{\mathcal{I}}: \pmcg(S) \to \pmcg(S')\] by letting $\widehat{\mathcal{I}}([\phi]) = [\widehat{\phi}]$, where $\widehat{\phi}$ is the extension of  $\phi \in \homeo_c^+(S,\partial S)$  defined in  \eqref{eq:ext_homeo_multiemb}. For a proof that $\widehat{\mathcal{I}}$ is well-defined see  Lemma  \ref{lema:colec_emb_induc}  below. 
\end{example} 

Motivated by the previous examples, we give the following definition.

\begin{definition}\label{def:colec_emb_induc}
	The multi-embedding  $\mathcal{I} = \{\iota_i:S\hookrightarrow S'|\: i=1,2,\dots,k\}$  \emph{induces a homomorphism between mapping class groups} if the map $\widehat{\mathcal{I}}:\pmcg(S)\to\pmcg(S')$ given by $\widehat{\mathcal{I}}([\phi])= [\widehat{\phi}]$ is a well-defined homomorphism, where  $\phi \in \homeo_c^+(S,\partial S)$ and
		\[ 
	\widehat{\phi}(x) = 
	\begin{cases}
		x & x\not \in \iota_i(S) \; \forall i,\\ 
		\iota_i \circ \phi \circ \iota_i^{-1}(x) & x\in \iota_i(S).
	\end{cases}
	\]
\end{definition}

\begin{remark}
	The notion of multi-embedding induced map is analogous to the notion of `topologically diagonal map' introduced by Mann for diffeomorphism groups (see  \cite[Definition 1.4]{mann_homomorphisms_2015}).
\end{remark}

The next lemma characterizes multi-embeddings inducing homomorphisms between mapping class groups. 

\begin{lemma}\label{lema:colec_emb_induc}
	 The multi-embedding $\mathcal{I} = \{\iota_i:S\hookrightarrow S':\: i=1,2,\dots,k\}$ induces a homomorphism between mapping class groups if and only if for every curve $\gamma$ on $S$ that bounds a puncture and for every embedding $\iota_i \in \mathcal{I}$ one of the following conditions hold:
	\begin{itemize}
		\item either $\iota_i(\gamma)$ bounds a disk, 
		\item or $\iota_i(\gamma)$ bounds a once-punctured disk, 
		\item or there exists an embedding $\iota_j\in I$ such that $\iota_j(\gamma)$ is homotopic to  $\iota_i(\gamma)$ and the two embeddings induce opposite orientations.
	\end{itemize}
\end{lemma}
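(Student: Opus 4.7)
My plan is to reduce the lemma to a direct computation of the image of puncture-bounding Dehn twists under $\widehat{\mathcal{I}}$, via the following steps.

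First, I would observe that $\mathcal{I}$ always induces a continuous group homomorphism $\widehat{\mathcal{I}}\colon \homeo_c^+(S,\partial S)\to \homeo_c^+(S',\partial S')$: compact support of $\phi$ guarantees that $\widehat{\phi}$ is the identity in a neighborhood of every $\partial\iota_i(S)$, hence continuous across it. Taking $\pi_0$ yields a homomorphism between the compactly supported mapping class groups, which I would identify canonically with $\mcg(\hat{S})$ and $\mcg(\hat{S}')$, where $\hat{S}$ (respectively $\hat{S}'$) denotes the surface obtained from $S$ (respectively $S'$) by capping each puncture with a boundary circle. The natural boundary-deletion epimorphism $\mcg(\hat{S})\twoheadrightarrow \pmcg(S)$ has kernel generated by the Dehn twists about the newly added boundary curves---that is, exactly the Dehn twists about the puncture-bounding curves of $S$. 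By the universal property of the quotient, $\widehat{\mathcal{I}}$ descends to $\pmcg(S)\to\pmcg(S')$ if and only if $\widehat{\mathcal{I}}(d_\gamma)=1$ in $\pmcg(S')$ for every puncture-bounding $\gamma$ in $S$.

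Next, unwinding \eqref{eq:ext_homeo_multiemb} for each such $\gamma$, I would compute
\[
\widehat{\mathcal{I}}(d_\gamma)\;=\;\prod_{i=1}^{k} d_{\iota_i(\gamma)}^{\epsilon_i},\qquad \epsilon_i=\begin{cases}+1 & \text{if }\iota_i\text{ preserves orientation,}\\ -1 & \text{if }\iota_i\text{ reverses orientation.}\end{cases}
\]
By the standard equality criterion for multitwists recalled in the preliminaries, this element is trivial in $\pmcg(S')$ precisely when, after discarding the components $\iota_i(\gamma)$ that bound a disk or a once-punctured disk in $S'$ (these contribute trivial Dehn twists) and grouping the remaining components by homotopy class, the signed sum of the $\epsilon_i$ in each essential class vanishes. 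To translate this into the trichotomy of the lemma, I would combine the disjointness $\iota_i(S)\cap\iota_j(S)=\emptyset$ with the observation that each $\iota_i$ has a canonical ``extending side'' at $\iota_i(\gamma)$, namely the image of the complement of the once-punctured disk bounded by $\gamma$; a short argument then shows that two embeddings with essential $\iota_i(\gamma)\simeq\iota_j(\gamma)$ must extend on opposite sides of the common curve in $S'$, so at most two such embeddings can share a given essential homotopy class. Under this bound, vanishing of the signed sum is equivalent to each essential $\iota_i(\gamma)$ admitting exactly one matching opposite-oriented partner, which is the third bullet of the statement.

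\emph{Main obstacle.} I expect the most delicate step to be this last geometric input: showing that disjointness together with the extending-side observation forces the at-most-two-per-essential-class bound. It is intuitive, but requires care because the puncture of $S$ is not literally mapped to a point of $S'$ when $\iota_i(\gamma)$ is essential, so one must track how the image of the once-punctured disk sits near $\iota_i(\gamma)$ inside $S'$. Once this geometric bound is secured, the remainder is a routine application of the universal property together with the multitwist algebra already developed in the preliminaries.
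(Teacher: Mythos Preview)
Your approach is essentially the same as the paper's: both replace punctures by boundary circles (your $\hat{S}$ is the paper's $\overline{S}$), reduce well-definedness of $\widehat{\mathcal{I}}$ to the vanishing of the multitwist $\prod_i d_{\iota_i(\gamma)}^{\epsilon_i}$ in $\pmcg(S')$, and then factor through the boundary-deletion map. The paper declares the forward direction ``an exercise'' and, for the converse, simply asserts that the three bullets force this product to be trivial; you are correct that the geometric content behind both directions is precisely the ``at most two per essential homotopy class'' observation, and your extending-side sketch is the right idea---if three $\iota_i(\gamma)$ were parallel, the middle $\iota_i(S)$ would be trapped in the annulus cobounded by the outer two, contradicting its positive genus.
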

In the third case, when $\iota_i(\gamma)$ is nontrivial, we say $\iota_i$ sends a puncture to a nontrivial curve. 
\begin{proof}
	If $\mathcal{I}$ is multi-embedding inducing a map between mapping class groups, then it is an exercise to see that for every embedding and every puncture one of the three conditions above holds. We prove the converse. 
	
	Let $\{p_1, \dots, p_n\}$  be the set of punctures such that $p_i$ is sent to a nontrivial curve by some embedding in $\mathcal{I}$. Let $U_{p_i}$ be an open regular neighborhood for each $p_i$ and consider the subsurface  $\overline{S} =  S\setminus \bigcup_i U_{p_i}$. The surface $\overline{S}$   has one boundary component for each puncture $p_i$. We may embed $\overline{S}$ in $S'$ by composing the inclusions $\overline{S} \xhookrightarrow{s} S\xhookrightarrow{\iota_i}S$, where $s$ is the inclusion  as a subset. Now, by Lemma \ref{lema:emb_que_induce}, the embeddings $\iota_i\circ s$ induce a map between mapping class groups. Even more, if $i\ne j$ then elements in the image of $\widehat{\iota_i\circ s}$ commute with elements in the image of $\widehat{\iota_j \circ s}$. As a consequence, the set of maps $\{\iota_i\circ s\}$ induces a product map  
	\[ 
	\pmcg(\overline{S})\xrightarrow{\prod \widehat{\iota_i\circ s}}  \pmcg(S'),
	 \]
	 where $\prod \widehat{\iota_i\circ s}(f) = \widehat{\iota_1\circ s}(f) \cdot \widehat{\iota_2\circ s} (f) \cdot \ldots \cdot \widehat{\iota_{|\mathcal{I}|}\circ s} (f)$.
	 
	 On one side, the three conditions in the statement imply that  \[\prod \widehat{\iota_i\circ s}(d_a)=\text{id}\]  for every curve  $a\in \partial \overline{S}$ homotopic to a puncture $p_i$ in  $S$. On the other side, the embedding $s$ induces the `boundary deletion'  map $\widehat{s}: \pmcg(\overline{S}) \to \pmcg(S)$. From these two facts, we conclude that $\prod \widehat{\iota_i\circ s}$ factors through $\widehat{s}$, that is, there exists  $\psi$ such that the following diagram commutes
	  \begin{center}
	  	\begin{tikzcd}
 		\pmcg(\overline{S}) \arrow{d}[swap]{\widehat{s}}  \arrow{r}{\prod\widehat{\iota_i\circ s}} &\pmcg(S') \\ 
 		\pmcg(S) \arrow[dashed]{ru}[swap]{ \psi}
 		\end{tikzcd}
 	\end{center}
 	It is immediate that $\psi([\phi])=[\widehat{\phi}]$. In words,  $\psi$ is the map induced by the multi-embedding  $\mathcal{I}$ and $\psi = \widehat{\mathcal{I}}$ is a well-defined homomorphism, as we wanted. 
 \end{proof}

\section{Twist-preserving homomorphisms}\label{sec:preserva_giros}\label{appendix:prueba_tma_preserva_giros}
In this section we prove that homomorphisms $\pmcg(S)\to \pmcg(S')$ that send Dehn twists to Dehn twists are induced by embeddings. The main ideas behind the proof of this result are  contained in Aramayona-Souto \cite[Sections 9 and 10]{aramayona_homomorphisms_2013}. Here, we collect the necessary modifications to their statements and arguments. 

\begin{definition}[Twist-preserving homomorphisms]
	Let $S,S'$ be orientable finite-type surfaces and let  $\varphi:\pmcg(S) \to \pmcg(S')$ be a homomorphism. The map  $\varphi$ is \emph{twist-preserving} if the image of a non-separating Dehn twist is a nontrivial power of a Dehn twist. 
\end{definition}

\new{If the image of a single non-separating Dehn twist is a non-trivial power of a Dehn twist, then the image of any non-separating Dehn twist is a power of a Dehn twist, since they are all conjugate to each other. In particular, to check that a map is Dehn twist preserving it is enough to check it on a single non-separating Dehn twist in the domain. }

Now, we devote the rest of this section to prove the following theorem.

\begin{theorem}\label{thm:preservargiros_embinducido}
	Let $S,S'$ be orientable finite-type surface. If $S$ has genus $g \geq 3$ and   $\varphi:\pmcg(S) \to \pmcg(S')$ is a twist-preserving homomorphism, then $\varphi$ is induced by an embedding. 
\end{theorem}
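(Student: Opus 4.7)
The plan is to first extract from $\varphi$ an assignment on non-separating curves, then promote this assignment to a genuine topological embedding $\iota:S\hookrightarrow S'$, and finally verify that the induced map $\widehat{\iota}$ coincides with $\varphi$.

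First I would set up the curve correspondence. For each non-separating curve $c$ on $S$, the twist-preserving hypothesis yields a curve $\varphi_*(c)$ on $S'$ and a nonzero integer $n_c$ with $\varphi(d_c)=d_{\varphi_*(c)}^{n_c}$; uniqueness of $\varphi_*(c)$ is standard since a Dehn twist determines its core curve among the roots of its powers. Because any two non-separating Dehn twists in $\pmcg(S)$ are conjugate (this is where $g\geq 1$ is used, and in fact $g\geq 3$ allows Humphries generators), their images are conjugate, so $n_c$ equals a common value $n$ independent of $c$. To pin down $n=\pm 1$, I would take curves $a,b$ on $S$ with $i(a,b)=1$, apply the braid relation $d_ad_bd_a=d_bd_ad_b$, push it through $\varphi$, and invoke Theorem \ref{lema:trenzas} on braided multitwists: the conclusion of that theorem applied to $d_{\varphi_*(a)}^n$ and $d_{\varphi_*(b)}^n$ forces $|n|=1$ and moreover $i(\varphi_*(a),\varphi_*(b))=1$. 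After possibly precomposing with the sign inversion (absorbed by Theorem \ref{thm:abelianization_pmcg} and handled as in Aramayona--Souto), assume $n=1$.

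Next I would upgrade $\varphi_*$ to a geometric map. Disjointness is preserved: if $i(a,b)=0$ then $d_a$ and $d_b$ commute, hence so do $d_{\varphi_*(a)}$ and $d_{\varphi_*(b)}$, forcing $i(\varphi_*(a),\varphi_*(b))=0$. Together with the intersection-one preservation from the previous paragraph, $\varphi_*$ is a simplicial, intersection-one preserving map on the graph of non-separating curves; injectivity holds since distinct curves produce distinct Dehn twists. At this point I would invoke the rigidity machinery of Aramayona--Souto \cite[Sections 9 and 10]{aramayona_homomorphisms_2013}, which upgrades such a map into a topological embedding $\iota:S\hookrightarrow S'$ satisfying $\iota(c)$ isotopic to $\varphi_*(c)$ for every non-separating curve $c$. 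The modifications needed relative to the original text amount to working with an arbitrary target genus $g'$ rather than the restricted range $g'\leq 2g-1$, and checking that the argument does not use any hypothesis beyond the simplicial data already established.

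Finally, I would check two things. On one hand, Lemma \ref{lema:emb_que_induce} applies to $\iota$: for each puncture $p$ of $S$ with encircling curve $\gamma$, the relation $\varphi(d_\gamma)=\text{id}$ or an appropriate power relation among nearby non-separating twists (via a chain relation) forces $\iota(\gamma)$ to bound either a disk or a once-punctured disk, so $\iota$ induces a homomorphism $\widehat{\iota}:\pmcg(S)\to\pmcg(S')$. On the other hand, $\varphi$ and $\widehat{\iota}$ agree on every non-separating Dehn twist by construction, and since Humphries' theorem gives that such twists generate $\pmcg(S)$, the two homomorphisms coincide. The main obstacle is the second paragraph: transporting the bare combinatorial data of $\varphi_*$ into an honest topological embedding when $S'$ has substantially larger complexity than $S$, which requires careful tracking of how separating curves, boundary, and punctures of $S'$ interact with the non-separating subgraph on which $\varphi_*$ was defined.
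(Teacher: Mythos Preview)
Your outline follows the same overall strategy as the paper: establish $n=\pm1$ via the braid relation (this is the paper's Lemma~\ref{lemma:preserva_giros}), build a curve correspondence, and appeal to Aramayona--Souto \cite[Sections 9--10]{aramayona_homomorphisms_2013}. The difference is organizational. The paper does not treat Aramayona--Souto as a black box that converts the simplicial data of $\varphi_*$ into an embedding; instead it first carries out a sequence of explicit reductions --- to irreducible maps (Proposition~\ref{prop:preserva_giros_basta_con_irreducibles}), to maps that do not factor through a forgetful or boundary-deletion map (Lemma~\ref{lema:preserva_giros_sin_factorizar}), and to surfaces with $\partial S=\partial S'=\emptyset$ (Proposition~\ref{prop:irred_nofactoriza__nofronteras}) --- reproving each step without the original hypothesis $g'\leq 2g-1$, and only then invokes \cite[Section 10]{aramayona_homomorphisms_2013}. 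The ``careful tracking'' you flag as the main obstacle is exactly what those reductions accomplish; your proposal is correct in principle but leaves that work implicit.

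One genuine slip: your assertion that $\varphi_*$ is injective because ``distinct curves produce distinct Dehn twists'' is not justified and can fail. If $\varphi$ factors through a forgetful map, two non-separating curves bounding a once-punctured annulus in $S$ will have the same image under $\varphi_*$. The paper's Lemma~\ref{lemma:exercise} only yields distinctness for pairs $a,\tilde a$ with $i(a,\tilde a)\leq 1$ and connected complement (enough for Humphries curves), and it is precisely the reduction to the non-factoring case that removes this ambiguity in general. Without that reduction, you cannot feed $\varphi_*$ to a rigidity theorem for simplicial \emph{embeddings} of the curve graph.
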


\subsection{Dehn twists to Dehn twists} A priori, a twist-preserving homomorphism $\varphi$ might send a non-separating Dehn twist $d_a$ to a proper power of a Dehn twist, $\varphi(d_a)=d_b^{n}$.  The next lemma shows that $b$ must be  non-separating  and $n\in \{1, -1\}$.

\begin{lemma}\label{lemma:preserva_giros}
	Let $S, S'$ be two orientable finite-type surfaces, let $g\geq 2$ be the genus of $S$ and let  $\varphi:\pmcg(S)\to \pmcg(S')$ be a twist-preserving map. If $d_a$  is a non-separating Dehn twist, then  $\varphi(d_a)=d_b^\epsilon$ is a non-separating Dehn twist and $\epsilon \in \{1, -1\}$.
\end{lemma}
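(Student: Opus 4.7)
The plan is to fix two non-separating curves $a_1, a_2$ on $S$ with $i(a_1, a_2) = 1$---these exist because $g \geq 2$---and exploit the braid relation $d_{a_1}d_{a_2}d_{a_1} = d_{a_2}d_{a_1}d_{a_2}$ together with Theorem~\ref{lema:trenzas}. By hypothesis, $\varphi(d_{a_i}) = d_{b_i}^{n_i}$ for some Dehn twists $d_{b_i}$ and nonzero integers $n_i$. Because $\pmcg(S)$ acts transitively on non-separating curves, there is $h \in \pmcg(S)$ with $h d_{a_1} h^{-1} = d_{a_2}$; applying $\varphi$ and reading off the resulting single-component equation $d_{\varphi(h)(b_1)}^{n_1} = d_{b_2}^{n_2}$ forces $n_1 = n_2 =: n$ and shows that $b_1, b_2$ have the same topological type in $S'$.

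The multitwists $d_{b_1}^n$ and $d_{b_2}^n$ still satisfy the braid relation, so Theorem~\ref{lema:trenzas} provides a decomposition
\[ d_{b_1}^n = d_{x_1}^{\epsilon_1}\cdots d_{x_k}^{\epsilon_k}\, m_C, \qquad d_{b_2}^n = d_{y_1}^{\epsilon_1}\cdots d_{y_k}^{\epsilon_k}\, m_C, \]
with $\epsilon_i \in \{\pm 1\}$, the curves $x_i, y_j$ pairwise disjoint except $i(x_i, y_i) = 1$, and the components of $m_C$ disjoint from every $x_i, y_j$. If $\varphi(d_{a_1}) \neq \varphi(d_{a_2})$ then necessarily $k \geq 1$ (otherwise both sides collapse to $m_C$), and since $d_{b_1}^n$ has the single component $b_1$, uniqueness of the multitwist decomposition pins down $k = 1$, $m_C = 1$, $x_1 = b_1$, and $n = \epsilon_1 \in \{\pm 1\}$; symmetrically $y_1 = b_2$. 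The relation $i(b_1, b_2) = i(x_1, y_1) = 1$ then forces $b_1$ to be non-separating, since a separating curve has even intersection number with every simple closed curve. Conjugating in the domain extends the conclusion to every non-separating Dehn twist.

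The main obstacle is the degenerate alternative $\varphi(d_{a_1}) = \varphi(d_{a_2})$, for which Theorem~\ref{lema:trenzas} yields no useful information beyond the common value. I plan to rule it out globally: the graph whose vertices are non-separating curves on $S$ and whose edges join pairs meeting once is connected for $g \geq 2$, so if $\varphi(d_{a_1}) = \varphi(d_{a_2})$ held for \emph{every} intersection-one pair, then $\varphi$ would take a single common nontrivial value $t = d_b^n$ on every non-separating Dehn twist. Because these twists generate $\pmcg(S)$ and $t$ has infinite order (as $d_b$ has infinite order and $n \neq 0$), the image of $\varphi$ would be $\langle t \rangle \cong \mathbb{Z}$, producing a nontrivial homomorphism $\pmcg(S) \to \mathbb{Z}$. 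This contradicts Theorem~\ref{thm:abelianization_pmcg}, which asserts that $\pmcg(S)^{\mathrm{ab}}$ is finite for $g \geq 2$. Hence some intersection-one pair falls into the case handled in the previous paragraph, completing the proof.
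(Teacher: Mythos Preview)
Your proof is correct and follows essentially the same route as the paper: equate the exponents via conjugacy, apply Theorem~\ref{lema:trenzas} to the braided pair, and rule out the degenerate case $b_1=b_2$ using finite abelianization (Theorem~\ref{thm:abelianization_pmcg}). The only cosmetic difference is that the paper shows $b=\tilde b$ for a \emph{single} pair already forces cyclic image---by choosing a third curve $c$ with $i(c,a)=0$ and $i(c,\tilde a)=1$, so that $\varphi(d_c)$ both commutes and braids with $\varphi(d_a)=\varphi(d_{\tilde a})$ and hence equals it, then propagating along a Humphries generating set---whereas you argue globally via connectivity of the intersection-one graph of non-separating curves; both reach the same contradiction.
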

\begin{proof}
	Let $\tilde{a}$ be a non-separating curve on  $S$  such that $i(a, \tilde{a})=1$. Since $d_{\tilde{a}}$ is conjugate to $d_a$, the image satisfies
	\[ \varphi(d_{\tilde{a}}) =  \varphi(f)\cdot d_b^\epsilon \cdot \varphi(f)^{-1} =  d_{\varphi(f)\cdot b}^\epsilon = d_{\tilde{b}}^\epsilon\]
	 for some element $f\in \pmcg(S)$ and some  curve $\tilde{b}$ on $S'$.
	 
	 \new{If $b=\tilde{b}$, then $\varphi$ has infinite cyclic image. To prove this consider a curve $c$ with $i(c,a)=0$ and $i(c,\tilde{a})=1$. Then, the elements $\varphi(d_c)$ and $\varphi(d_a)=\varphi(d_{\tilde{a}})$ satisfy both  the commuting and braiding relations, which implies $\varphi(d_c)=\varphi(d_a)$. Extending the argument along a Humphries generating set yields that the image of $\varphi$ is infinite cyclic generated by $\varphi(d_a)$. However, the abelianization of $\pmcg(S)$ is finite by Theorem \ref{thm:abelianization_pmcg}, and thus  $b\ne \tilde{b}$.}
	
	Finally, since $d_a, d_{\tilde{a}}$ satisfy the braid relation, then  $d_b^\epsilon, d_{\tilde{b}}^\epsilon$ also satisfy the braid relation. From Theorem \ref{lema:trenzas} it follows that $i(b, \tilde{b})=1$ and $\epsilon \in \{1, -1\}$.
\end{proof}
\begin{remark}
	The lemma above ensures that $\varphi(d_a)=d_b^\epsilon$ with $\epsilon\in \{1, -1\}$. \new{Since any two non-separating Dehn twists are conjugate, the first paragraph of the proof above yields that $\epsilon$ is independent of the non-separating Dehn twist. In particular}, we may assume that $\epsilon=1$ up to composing by an automorphism in the image. 
\end{remark}

\new{It is worth noting that a slight modification to the proof above yields the next lemma.}

\begin{lemma}\label{lemma:exercise}
	\new{Let $S$ be a surface of genus $g\geq 3$ and let $\varphi: \pmcg(S)\to G$ be a nontrivial homomorphism to a group $G$.  If the curves $a,\tilde{a}$ \neww{satisfy $i(a, \tilde{a})\leq 1$ and} $S\setminus (a\cup \tilde{a})$ is connected, then the images $\varphi(d_a)$ and $\varphi(d_{\tilde{a}})$ are distinct.}
\end{lemma}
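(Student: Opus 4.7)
The plan is to argue by contradiction, assuming $\varphi(d_a) = \varphi(d_{\tilde{a}})$ and mirroring the first paragraph of the proof of Lemma \ref{lemma:preserva_giros}. The algebraic input is the following elementary observation: if two elements $x, y$ of a group satisfy both the commuting relation $xy = yx$ and the braid relation $xyx = yxy$, then $x = y$ (substituting the first into the second yields $x^{2}y = xy^{2}$, and cancelling $xy$ gives $x = y$). Consequently, if one can produce an auxiliary curve $c$ on $S$ whose image $\varphi(d_c)$ commutes with one of $\varphi(d_a), \varphi(d_{\tilde{a}})$ and braids with the other, then $\varphi(d_c) = \varphi(d_a)$.

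To produce $c$ I would split according to the value of $i(a, \tilde{a})$. Note first that $a$ and $\tilde{a}$ are non-separating in both cases: when $i(a, \tilde{a}) = 1$ this is forced by the odd intersection number, and when $i(a, \tilde{a}) = 0$ it follows from connectedness of $S \setminus (a \cup \tilde{a})$, since otherwise the second curve would lie on one side of the first and the complement would have at least two components. If $i(a, \tilde{a}) = 1$, pick $c$ with $i(c, a) = 0$ and $i(c, \tilde{a}) = 1$, exactly as in Lemma \ref{lemma:preserva_giros}; such a $c$ exists by a change-of-coordinates argument, using that $S \setminus a$ is connected of genus $g - 1 \geq 2$. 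If $i(a, \tilde{a}) = 0$, reverse the two intersection numbers and seek $c$ with $i(c, a) = 1$ and $i(c, \tilde{a}) = 0$; its existence is supplied directly by the hypothesis that $S \setminus (a \cup \tilde{a})$ is connected, which provides an arc from one side of $a$ to the other inside $S \setminus \tilde{a}$, closing up to the desired curve. In either situation the algebraic trick yields $\varphi(d_c) = \varphi(d_a)$.

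With the initial triangle $\{a, \tilde{a}, c\}$ established, I would propagate the equality along a Humphries generating set $\mathcal{H}$, following the pattern sketched in the proof of Lemma \ref{lemma:preserva_giros}. Each iteration produces a fresh pair of non-separating curves with intersection at most one whose Dehn twists already have equal $\varphi$-image, and a new application of the triangle construction then adds a further non-separating Dehn twist to the list of curves $c'$ with $\varphi(d_{c'}) = \varphi(d_a)$. Iterating exhausts all Humphries generators, so $\varphi(\pmcg(S))$ is cyclic and in particular abelian. By Theorem \ref{thm:abelianization_pmcg}, the abelianization of $\pmcg(S)$ is trivial for $g \geq 3$, so $\varphi$ itself is trivial, contradicting the nontriviality hypothesis.

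The main obstacle is the propagation step, i.e.\ chaining the equality from a single triangle to a full Humphries generating set. This is essentially the same combinatorial argument referenced, but not fully spelled out, inside the proof of Lemma \ref{lemma:preserva_giros}, and is standard once the first triangle is in place. The genuinely new content compared to Lemma \ref{lemma:preserva_giros} is the handling of the case $i(a, \tilde{a}) = 0$; as indicated above, this amounts only to swapping the roles of $a$ and $\tilde{a}$ in the choice of $c$, which is possible thanks to the connectedness assumption on $S \setminus (a \cup \tilde{a})$.
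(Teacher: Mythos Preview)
Your argument is correct, and for the case $i(a,\tilde a)=1$ it coincides with the paper's (which simply refers back to Lemma~\ref{lemma:preserva_giros}). For the case $i(a,\tilde a)=0$ the paper takes a different route: rather than first manufacturing an auxiliary curve $c$ with $i(c,a)=1$ to reduce to the intersection-one case, it chooses a Humphries generating set $H$ containing both $a$ and $\tilde a$ and with the property that every other curve $c\in H$ is disjoint from at least one of them; then for each such $c$ it uses the change-of-coordinates principle to find $f\in\pmcg(S)$ with $f(a)=c$ and $f(\tilde a)=\tilde a$ (or the symmetric choice), so that conjugation by $\varphi(f)$ gives $\varphi(d_c)=\varphi(d_a)$ in one stroke. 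Your approach trades this use of conjugation for the purely algebraic ``commute $+$ braid $\Rightarrow$ equal'' trick followed by step-by-step propagation along a chain; it is more uniform across the two cases and avoids invoking change of coordinates for pairs of curves, at the cost of leaving the propagation somewhat informal (though no more so than the paper itself does in Lemma~\ref{lemma:preserva_giros}).
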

\begin{proof}
	\new{If the curves $a, \tilde{a}$ intersect once, then the argument is the same as in the proof of Lemma \ref{lemma:preserva_giros}. We are left to prove the case where \neww{$i(a, \tilde{a})=0$ and} $S\setminus (a\cup \tilde{a})$ is connected. In this case, there exist a set of curves $H$ such that $a,\tilde{a}\in H$, the set $\{d_c|\;c\in H\}$ is a Humphries generating set of $\pmcg(S)$ and every curve $c\in H\setminus \{a, \tilde{a}\}$  is disjoint with either $a$ or $\tilde{a}$.}
	
	\new{Seeking a contradiction, assume that $\varphi(d_a)=\varphi(d_{\tilde{a}})$. First, fix any curve $c\in H$. Without loss of generality, suppose that $c$ is disjoint with $\tilde{a}$. By the change of coordinates principle, there exists $f\in\pmcg(S)$ such that $f(a)=c$ and $f(\tilde{a})=\tilde{a}$. Now, conjugation gives that} \[\varphi(d_c)=\varphi(d_a)^{\varphi(f)}=\varphi(d_{\tilde{a}})^{\varphi(f)}=\varphi(d_{\tilde{a}}^f)=\varphi(d_{\tilde{a}})=\varphi(d_a).\]
	\new{Since $c\in H$ was arbitrary and $\{d_c|\;c\in H\}$ generate $\pmcg(S)$, then the image of $\varphi$ is infinite cyclic, but this contradicts that $\pmcg(S)$ has finite abelianization (see Theorem \ref{thm:abelianization_pmcg}}). 
\end{proof}

\subsection{Reducing to the irreducible case}

In this section we reduce the proof of Theorem \ref{thm:preservargiros_embinducido} to the case of irreducible homomorphisms. The next proposition gives sufficient conditions on an  embedding for it to induce a map between the mapping class groups (cf. \cite[Proposition 9.2]{aramayona_homomorphisms_2013}).

\begin{proposition}\label{prop:homeo_extendible_a_ind}
	Let $S,S'$ be two orientable finite-type surfaces and let $\varphi: \pmcg(S)\to \pmcg(S')$ be a homomorphism. If $\iota:S\to S'$ is an  embedding such that  $\varphi(d_a) = d_{\iota(a)}$ for every non-separating curve $a$, then $\iota$  induces the map $\varphi$.
\end{proposition}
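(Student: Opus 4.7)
The plan is to establish the puncture condition of Lemma~\ref{lema:emb_que_induce}, producing a well-defined induced homomorphism $\hat{\iota}:\pmcg(S)\to \pmcg(S')$, and then to show that $\hat{\iota}$ coincides with $\varphi$ by comparing them on a generating set.

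To verify the puncture condition, I will fix a puncture $p$ of $S$ and a simple closed curve $\gamma$ homotopic to $p$, and show that $\iota(\gamma)$ bounds a disk or once-punctured disk in $S'$. The Birman exact sequence~\eqref{eq:hom_birman} gives $d_\gamma=1$ in $\pmcg(S)$, since $\gamma$ is null-homotopic once $p$ is filled in. Using the hypothesis $g\geq 3$, I embed a 4-holed sphere $P\subset S$ with boundary curves $\partial_1,\partial_2,\partial_3,\partial_4$ arranged so that $\partial_4=\gamma$ and all of $\partial_1,\partial_2,\partial_3$ together with the interior lantern curves $c_{12},c_{13},c_{23}$ are non-separating in $S$. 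The lantern relation reads $d_{c_{12}}d_{c_{13}}d_{c_{23}}=d_{\partial_1}d_{\partial_2}d_{\partial_3}d_\gamma$ in $\mcg(P)\hookrightarrow \pmcg(S)$, which reduces to $d_{c_{12}}d_{c_{13}}d_{c_{23}}=d_{\partial_1}d_{\partial_2}d_{\partial_3}$ using $d_\gamma=1$. Applying $\varphi$ and the hypothesis on non-separating curves yields $d_{\iota(c_{12})}d_{\iota(c_{13})}d_{\iota(c_{23})}=d_{\iota(\partial_1)}d_{\iota(\partial_2)}d_{\iota(\partial_3)}$ in $\pmcg(S')$. On the other hand, since $\iota$ is a topological embedding, $\iota(P)\subset S'$ is a 4-holed sphere with boundary $\iota(\partial_1),\iota(\partial_2),\iota(\partial_3),\iota(\gamma)$, and the corresponding lantern relation there is $d_{\iota(c_{12})}d_{\iota(c_{13})}d_{\iota(c_{23})}=d_{\iota(\partial_1)}d_{\iota(\partial_2)}d_{\iota(\partial_3)}d_{\iota(\gamma)}$. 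Comparing the two identities forces $d_{\iota(\gamma)}=1$ in $\pmcg(S')$, which holds precisely when $\iota(\gamma)$ bounds a disk or once-punctured disk.

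With the puncture condition verified, Lemma~\ref{lema:emb_que_induce} produces the induced homomorphism $\hat{\iota}:\pmcg(S)\to \pmcg(S')$ satisfying $\hat{\iota}(d_a)=d_{\iota(a)}$ for every non-separating curve $a$. By hypothesis, $\hat{\iota}$ and $\varphi$ agree on every non-separating Dehn twist, and since these form the Humphries generating set of $\pmcg(S)$ (using $g\geq 3$), the two homomorphisms coincide, establishing that $\iota$ induces $\varphi$. The main obstacle is arranging the 4-holed sphere $P$ with the required non-separation conditions on $\partial_1,\partial_2,\partial_3$ and $c_{12},c_{13},c_{23}$; the genus hypothesis $g\geq 3$ provides enough room in $S$ for such a placement.
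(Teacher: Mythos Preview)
Your proof is correct and takes a genuinely different route from the paper. Both arguments verify the puncture condition of Lemma~\ref{lema:emb_que_induce} and then identify $\hat{\iota}$ with $\varphi$ on a generating set of non-separating Dehn twists; the difference is in how the puncture condition is established. The paper works structurally: it takes $Z$ to be a closed regular neighbourhood of the Humphries curves (a compact subsurface with no punctures, so $\iota|_Z$ automatically induces a map on mapping class groups), checks the square $\varphi\circ\hat{i}=\hat{i'}\circ\widehat{\iota|_Z}$ on the Humphries generators, and then reads off $d_{\iota(c)}=\mathrm{id}$ for each $c\in\partial Z$ homotopic to a puncture in $S$. Your lantern argument bypasses this square and instead writes $d_{\iota(\gamma)}$ directly as a word in twists about non-separating curves and their $\iota$-images, which is more elementary and self-contained; the paper's approach has the advantage of setting up the comparison-via-$Z$ framework that recurs (in spirit) in later sections. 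Two minor remarks: the equality $d_\gamma=1$ holds simply because $\gamma$ bounds a once-punctured disk and hence is not a curve in the paper's sense---no appeal to the Birman sequence is needed; and although the proposition carries no genus hypothesis, both your argument (to place the lantern with six non-separating curves) and the paper's (to invoke a Humphries generating set) tacitly use the ambient assumption $g\ge 3$ from Section~\ref{sec:preserva_giros}.
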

\begin{proof}
	Let $\mathcal{H}$ be a Humphries generating set of $\pmcg(S)$, let $H=\{a|\:d_a\in \mathcal{H}\}$  and let $Z$ be a closed regular neighbourhood of $\bigcup_{a\in H}a$.  Note the surfaces $Z$ and $\iota(Z)$ have no punctures. Therefore, by Remark \ref{rmk:cerrada_homeoext_induce}, the three maps given by the restriction  $\iota|_Z: Z \hookrightarrow \iota(Z)$, the inclusion $i: Z \hookrightarrow S$ and the inclusion $i':\iota(Z)\hookrightarrow S'$ all induce homomorphisms between the corresponding mapping class groups. Even more, the induced homomorphisms satisfy 
	\[ 
	\varphi \circ \widehat{i}(d_a) = \widehat{i'} \circ \widehat{\iota|_{Z}}(d_a)\;\;\; \forall d_a\in \mathcal{H}.
	\]
	Since $\mathcal{H}$ is a generating set, the induced maps fit into the commutative diagram
	\begin{center}
		\begin{tikzcd}
			\pmcg(Z) \arrow{r}{\widehat{i}} \arrow{d}{\widehat{\iota|_Z}} & \pmcg(S) \arrow{d}{\varphi} \\
			\pmcg(\iota(Z)) \arrow{r} {\widehat{i'}}  & \pmcg(S')                    
		\end{tikzcd}.
	\end{center}
	
	The map $\widehat{\iota|_Z}$ is an isomorphism by construction. As a consequence, for every curve $c\in \partial Z$ homotopic to a puncture in $S$, we have that $\iota(c)$ bounds a puncture or a disk. Indeed, if $\iota(c)$ was not trivial, then we would have the contradiction
	\[ 
	\text{id} = \varphi \circ \widehat{i}(d_c) =  \widehat{i'} \circ \widehat{\iota|_{Z}}(d_c) = d_{\iota(c)} \ne \text{id}.
	\]
	
	It follows that  $\iota$ sends punctures in  $S$  to punctures or disks in $S'$.  By Lemma \ref{lema:emb_que_induce}, $\iota$ induces a map $\hat{\iota}$ between mapping class groups. Since $\hat{\iota}(d_a) = d_{\iota(a)} =\varphi(d_a)$ for every non-separating Dehn twist $d_a\in \mathcal{H}$, we deduce that  $\hat{\iota}=\varphi$.
\end{proof}

Let $\varphi$ be a reducible twist-preserving homomorphism. By definition, $\varphi$ fixes a reducing multicurve $\eta$. Now, next lemma shows that $\varphi$ actually fixes each curve in $\eta$ with orientation.

\begin{lemma}\label{lema:preserva_mts_masque_reducible}
	Let $S, S'$ be two orientable finite-type surfaces and let $\varphi:\pmcg(S)\to\pmcg(S')$ be a homomorphism. If $\varphi(d_c)$ is a multitwist for any non-separating Dehn twist $d_c$ and the  image of $\varphi$ fixes a multicurve $\eta$ setwise, then the image of $\varphi$ fixes each curve in $\eta$ with orientation. 
\end{lemma}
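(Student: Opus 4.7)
My plan is to reduce the statement to a purely multitwist-theoretic claim, which I will then settle via the commutation criterion for multitwists recorded in Section~\ref{sec:preliminares}. Since $\pmcg(S)$ is generated by non-separating Dehn twists by the Dehn--Lickorish theorem, the image $\varphi(\pmcg(S))$ is generated by the multitwists $\{\varphi(d_c)\}$ as $d_c$ ranges over non-separating Dehn twists. Because the condition ``fixes each curve of $\eta$ with orientation'' is preserved under products and inverses, it suffices to show the following: any multitwist $m$ satisfying $m(\eta) = \eta$ setwise fixes each curve of $\eta$ with orientation.

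The key technical step will be the single-curve case: if a multitwist $m_B = d_{b_1}^{n_1} \cdots d_{b_k}^{n_k}$ fixes an unoriented curve $c$ setwise, then $c$ is disjoint from every $b_i$ and hence $m_B$ fixes $c$ pointwise up to isotopy. The subtle point --- and essentially the only real obstacle --- is that the hypothesis only yields $m_B(c) = c$ as unoriented curves, so a priori $m_B$ could reverse the orientation of $c$. To bypass this, I will pass to $m_B^2$, which fixes $c$ with orientation. Conjugation then gives $m_B^2\, d_c\, m_B^{-2} = d_{m_B^2(c)} = d_c$, so $m_B^2$ commutes with $d_c$. Since the components of $m_B^2$ coincide with $B$ (the powers $2 n_i$ remain nonzero), the commutation criterion recalled in the preliminaries forces $c$ to be disjoint from every $b_i$. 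Therefore $m_B$ admits a representative supported in a neighborhood of $B$ disjoint from $c$, and in particular fixes $c$ pointwise with orientation.

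To conclude, I will extend this to the full multicurve. Given a multitwist $m$ with $m(\eta) = \eta$, it induces a permutation $\sigma$ of the curves of $\eta$; applying the single-curve step to the power $m^{|\sigma|}$, which fixes every curve of $\eta$ setwise, shows that every curve of $\eta$ is disjoint from the components of $m$. Thus $m$ is supported away from $\eta$, so $\sigma$ must have been trivial and $m$ fixes every curve of $\eta$ pointwise with orientation. Applying this to each generator $\varphi(d_c)$ of $\varphi(\pmcg(S))$ finishes the proof.
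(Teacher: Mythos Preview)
Your argument is correct and hinges on the same fact the paper uses: a multitwist fixing a curve setwise must have all components disjoint from it, hence fixes the curve with orientation. The paper reaches this slightly more directly---arguing that if some component of $m_C$ met $\eta_i$ then the orbit $\{m_C^n(\eta_i):n\in\mathbb{Z}\}$ would be infinite, contradicting $m_C(\eta)=\eta$---whereas you pass to a power and invoke the commutation criterion; note also that your squaring step is unnecessary, since $d_c$ depends only on the unoriented isotopy class of $c$, so $m_B(c)=c$ already gives $[m_B,d_c]=1$ directly.
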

Note that, in particular, Lemma \ref{lema:preserva_mts_masque_reducible} applies to twist-preserving homomorphisms.
\begin{proof}
	Fix a Humphries generating set $\mathcal{H}$ of $\pmcg(S)$. For each $d_c \in \mathcal{H}$ denote $m_C = \varphi(d_c)$ the multitwist in the image, where  $C$ is the set of components of $m_C$. 
	
	Looking for a contradiction, consider a curve $\eta_i\in \eta$ not fixed by $\varphi(\pmcg(S))$. Since $\mathcal{H}$ is a generating set, there exists a $d_c\in \mathcal{H}$ such that $m_C(\eta_i) \ne \eta_i$. Thus, the multicurve $C$ intersects $\eta_i$. It follows that the set 
	\[ \{m_C^n(\eta_i)| \: n\in \mathbb{Z}\} \]
	is infinite. However, this contradicts that $\varphi(\pmcg(S))$ fixes the (finite) multicurve $\eta$ that contains $\eta_i$. We conclude that each curve  $\eta_i\in \eta$ is fixed by $\varphi(\pmcg(S))$.
	
	Notice that for every $d_c\in \mathcal{H}$ and $m_C=\varphi(d_c)$ we have that $i(C,\eta_i)=0$. In particular, $m_C$ fixes the orientation of $\eta_i$ and, since $\mathcal{H}$ generates,  the whole image of $\varphi$ fixes the orientation of $\eta_i$. 
\end{proof}

The next proposition reduces the proof of Theorem \ref{thm:preservargiros_embinducido} to the irreducible case. This result is parallel to \cite[Lemma 9.3]{aramayona_homomorphisms_2013} but we remark that in our setting we require a slightly different proof.

\begin{proposition}\label{prop:preserva_giros_basta_con_irreducibles}
	If  Theorem \ref{thm:preservargiros_embinducido} holds for irreducible homomorphisms, then it holds for reducible homomorphisms. 
\end{proposition}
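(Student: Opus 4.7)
The strategy is to cut $S'$ along a maximal reducing multicurve so that on each component the homomorphism becomes either trivial or irreducible, and then reassemble the resulting embeddings.

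Let $\varphi:\pmcg(S)\to\pmcg(S')$ be a reducible twist-preserving homomorphism and choose a reducing multicurve $\eta$ on $S'$ of maximal cardinality (which exists since multicurves on $S'$ have bounded size). Since $\varphi$ sends every non-separating Dehn twist to a Dehn twist, hence to a multitwist, Lemma \ref{lema:preserva_mts_masque_reducible} guarantees that the image of $\varphi$ actually lies in $\centralizer_0(\eta)$. Composing with the cutting homomorphism and projecting to the $i$-th factor yields maps $\varphi_i:\pmcg(S)\to\pmcg(R_i)$.

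For a non-separating curve $a$ on $S$ write $\varphi(d_a)=d_b$. Since $\varphi(d_a)\in\centralizer_0(\eta)$ commutes with $d_c$ for every $c\in\eta$ (as $\varphi(d_a)$ fixes each such $c$), the curve $b$ is disjoint from $\eta$ and therefore lies entirely inside some component $R_{i_0}$. Given any other non-separating curve $a'$, write $d_{a'}=f d_a f^{-1}$ for some $f\in\pmcg(S)$, so that $\varphi(d_{a'})=d_{\varphi(f)(b)}$. As $\varphi(f)$ preserves each curve of $\eta$ with orientation, it preserves each component $R_i$ setwise, so $\varphi(f)(b)\subset R_{i_0}$. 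Because non-separating Dehn twists generate $\pmcg(S)$, this forces $\varphi_j$ to be trivial for every $j\neq i_0$. In particular $\varphi_{i_0}$ is twist-preserving with $\varphi_{i_0}(d_a)=d_b$ in $\pmcg(R_{i_0})$.

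I claim $\varphi_{i_0}$ is irreducible. If a multicurve $\eta'\subset R_{i_0}$ were fixed setwise by $\varphi_{i_0}(\pmcg(S))$, then $\eta\cup\eta'$ would itself be a reducing multicurve for $\varphi$: indeed $\eta'$ is disjoint from $\eta$, so the kernel of $\cut_\eta$ (which consists of multitwists supported on $\eta$) acts trivially on $\eta'$, and hence the full action of $\varphi(\pmcg(S))$ on $\eta'$ agrees with that of $\varphi_{i_0}(\pmcg(S))$. This would contradict the maximality of $\eta$. Applying the assumed irreducible case of Theorem \ref{thm:preservargiros_embinducido} to $\varphi_{i_0}$ yields an embedding $\iota_0:S\hookrightarrow R_{i_0}$ inducing $\varphi_{i_0}$, so that $b=\iota_0(a)$.

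Finally, compose with the inclusion $R_{i_0}\hookrightarrow S'$ to obtain an embedding $\iota:S\hookrightarrow S'$ satisfying $\varphi(d_a)=d_b=d_{\iota(a)}$ for every non-separating curve $a$. Proposition \ref{prop:homeo_extendible_a_ind} then concludes that $\iota$ induces $\varphi$. The main subtle point in the plan is verifying irreducibility of $\varphi_{i_0}$ from maximality of $\eta$; once this is in hand, uniqueness of the component containing the image of every non-separating Dehn twist makes the reassembly straightforward.
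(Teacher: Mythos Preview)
Your proof is correct and follows essentially the same route as the paper's: cut along a maximal reducing multicurve, use Lemma~\ref{lema:preserva_mts_masque_reducible} to land in $\centralizer_0(\eta)$, pick a nontrivial projection, verify it is irreducible and twist-preserving, apply the irreducible case, and then invoke Proposition~\ref{prop:homeo_extendible_a_ind}. The paper is terser---it simply picks any nontrivial $\varphi_j$ and asserts ``by maximality of $\eta$, $\varphi_j$ is irreducible'' and ``$\varphi(d_a)=d_{i\circ\iota(a)}$'' without further comment---whereas you supply the extra step showing all non-separating Dehn twists map into a single component $R_{i_0}$ and spell out why maximality forces irreducibility. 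Your added detail is not strictly needed (once $\varphi_j$ is nontrivial, the single Dehn twist $\varphi(d_a)=d_b$ must project nontrivially, forcing $b\subset R'_j$), but it makes the argument more transparent.
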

\begin{proof}
	Let  $S$ be a surface of genus  $g\geq 3$ and $\varphi:\pmcg(S)\to \pmcg(S')$ be a reducible twist-preserving homomorphism. Consider $\eta$ a maximal reducing multicurve  for  $\varphi$. By Lemma \ref{lema:preserva_mts_masque_reducible}, the image $\varphi(\pmcg(S))$ fixes each component of  $\eta$ with orientation and so we can consider the cutting homomorphism 
	\[ \pmcg(S) \xrightarrow{\varphi} \mathcal{Z}_0(\eta) \xrightarrow{\cut_\eta} \prod_i \pmcg(R'_i), \]
	where $R'_i\subset S'$ are  surfaces such that $S'\setminus \bigcup_{\eta_i \in \eta} \eta_i = \bigsqcup_i R'_i$. 
	
	Recall that the kernel of $\cut_\eta$ is free abelian and, since $\pmcg(S)$ has trivial abelianization, the composition $\cut_\eta \circ \varphi$ is not trivial. Define \[\varphi_j = \pi_j\circ \cut_\eta \circ \varphi,\] where  $\pi_j$  is the projection to the $j$th component. Consider a subindex $j$ such that $\varphi_j$ is nontrivial. By the maximality of $\eta$, $\varphi_j$ is an irreducible twist-preserving homomorphism. From the hypotheses it follows that $\varphi_j$ is induced by an embedding $\iota: S \to R'_j$. Composing with the inclusion $i:R'_j \hookrightarrow S'$ we obtain an  embedding $ i\circ \iota: S \to S' $.  Moreover, $\varphi(d_a) = d_{i\circ \iota(a)}$ for every non-separating curve $a$. Using Proposition \ref{prop:homeo_extendible_a_ind}, we conclude that $i\circ\iota$ induces $\varphi$.
\end{proof}

\subsection{Reducing to homomorphisms that do not factor}

A homomorphism $\varphi:\pmcg(S)\to \pmcg(S')$ \emph{factors} if there is a surface $\overline{S}$, a map $\psi:\pmcg\left(\overline{S}\right) \to \pmcg(S')$ and a forgetful or boundary deletion map  $\sigma: \pmcg(S) \to \pmcg\left(\overline{S}\right)$  such that  the following diagram commutes 
\begin{center}
	\begin{tikzcd}
		\pmcg(S) \arrow{r}{\varphi} \arrow[swap]{d}{\sigma}& \pmcg(S')\\
		\pmcg\left(\overline{S}\right) \arrow[swap]{ru}{\psi}&
	\end{tikzcd}
\end{center}

\begin{lemma}\label{lema:preserva_giros_sin_factorizar}
	If Theorem \ref{thm:preservargiros_embinducido} holds for maps that do not factor, then it holds for maps that factor.  
\end{lemma}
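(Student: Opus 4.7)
The plan is to induct on the complexity $p(S) + 2 b(S)$ of the domain, where $p(S)$ and $b(S)$ count the punctures and boundary components of $S$. Writing $\varphi = \psi \circ \sigma$ with $\sigma : \pmcg(S) \to \pmcg(\overline{S})$ either a forgetful map (which sends $(p, b) \mapsto (p-1, b)$) or a boundary deletion map (which sends $(p, b) \mapsto (p+1, b-1)$), the surface $\overline{S}$ has the same genus $g \geq 3$ but complexity $p(S) + 2b(S) - 1$. The base case is vacuous, since for a closed surface there is no puncture to forget and no boundary to delete, so $\varphi$ cannot factor.

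First I would verify that $\psi$ is itself a twist-preserving homomorphism. Both types of $\sigma$ are surjective and send every non-separating Dehn twist $d_a$ on $S$ to the non-separating Dehn twist $d_{j(a)}$ on $\overline{S}$, where $j : S \hookrightarrow \overline{S}$ is the natural inclusion inducing $\sigma$. Hence, given any non-separating Dehn twist $d_c$ on $\overline{S}$, choose a non-separating Dehn twist $d_a$ on $S$ with $\sigma(d_a) = d_c$; then $\psi(d_c) = \varphi(d_a)$ is a power of a Dehn twist by hypothesis. Since $\overline{S}$ has strictly smaller complexity than $S$, Theorem \ref{thm:preservargiros_embinducido} applies to $\psi$ either by the hypothesis of the present lemma (if $\psi$ does not factor) or by the inductive hypothesis (if it does), yielding an embedding $\iota : \overline{S} \hookrightarrow S'$ that induces $\psi$.

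Next I would identify $j : S \hookrightarrow \overline{S}$ as an embedding which itself induces $\sigma$ in the sense of Definition \ref{def:embed_que_induce}. In the forgetful case, $j$ sends one puncture of $S$ to a disk (the filled puncture) and every other puncture of $S$ to a puncture of $\overline{S}$; in the boundary deletion case, $j$ sends every puncture of $S$ to a puncture of $\overline{S}$. In both situations Lemma \ref{lema:emb_que_induce} guarantees that $j$ induces a map between pure mapping class groups, and one checks directly from the definitions that this induced map is precisely $\sigma$.

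Finally, I would conclude using Proposition \ref{prop:homeo_extendible_a_ind}. For every non-separating curve $a$ on $S$,
\[ \varphi(d_a) = \psi(\sigma(d_a)) = \psi(d_{j(a)}) = d_{\iota(j(a))}, \]
so the embedding $\iota \circ j : S \hookrightarrow S'$ satisfies $\varphi(d_a) = d_{(\iota \circ j)(a)}$ on every non-separating Dehn twist. Proposition \ref{prop:homeo_extendible_a_ind} then yields that $\iota \circ j$ induces $\varphi$, as required. The main bookkeeping concern is that the iteration terminates; this is ensured by the complexity measure $p + 2b$, which strictly decreases at every factorization. The rest of the argument is the natural packaging of $\varphi$ as a composition of embeddings.
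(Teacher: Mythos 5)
Your proof is correct and follows essentially the same route as the paper's: factor $\varphi = \psi \circ \sigma$, observe that $\psi$ is twist-preserving because non-separating Dehn twists on $\overline{S}$ lift to non-separating Dehn twists on $S$, apply the no-factoring case (or induction) to get an embedding inducing $\psi$, and compose with the embedding inducing $\sigma$. Your complexity measure $p+2b$ and the explicit appeal to Proposition \ref{prop:homeo_extendible_a_ind} at the end are just slightly more careful bookkeeping than the paper's induction on the number of factorizations.
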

\begin{proof}
	Proceed by induction in the number of times it factors. Note that $\varphi$ can only factor finitely many times since $S$ is of finite-type. 
	
	Assume that  $\varphi:\pmcg(S)\to \pmcg(S')$ factors only once  and let $\varphi=\psi \circ \sigma$ be such factorization. Note that the map $\sigma$ is induced by an embedding  $s$. Also, since $\varphi$ factors only once, $\psi$ does not factor. \new{Observe that every Dehn twist in $\text{Im}\sigma = \pmcg(\overline{S})$ has a lift that is a Dehn twist in $\pmcg(S)$. Then, since $\varphi$ is twist preserving, we may conclude that $\psi$ is also twist preserving}. Now, by hypothesis, $\psi$ is induced by an embedding  $p$. This implies that  $\varphi$ is induced by the embedding $p\circ s$, as we wanted. 
	
	An analogous argument can be repeated for the inductive step. 
\end{proof}

\subsection{Reducing to surfaces with no boundary}
In this section we show that if $\varphi:\pmcg(S)\to \pmcg(S')$ is an irreducible homomorphism that does not factor and satisfies the hypotheses  of Theorem \ref{thm:preservargiros_embinducido}, then $S$ and $S'$ have no boundary.

The following lemma shows that if $\varphi$ is an irreducible twist-preserving homomorphism, then $\varphi(\pmcg(S))$ has trivial centralizer. This result is similar to \cite[Lemma 9.5]{aramayona_homomorphisms_2013} but adapted to our context.  

\begin{lemma}\label{lema:preserva_giros_centralizador_trivial}
	Let $S,S'$ be two orientable finite-type surfaces, let $g\geq 3$ be the genus of $S$ and let $\varphi:\pmcg(S)\to \pmcg(S')$ be a twist-preserving homomorphism. If $\varphi$ is irreducible, then the centralizer of  $\varphi(\pmcg(S))$ is trivial. 
\end{lemma}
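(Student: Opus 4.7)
The plan is to suppose some $f \in \pmcg(S')$ centralizes $\varphi(\pmcg(S))$ and to conclude $f = \mathrm{id}$ by ruling out each non-identity Nielsen-Thurston type. By Lemma \ref{lemma:preserva_giros}, for every non-separating curve $a \subset S$ we have $\varphi(d_a) = d_{b(a)}^{\pm 1}$ for some non-separating curve $b(a) \subset S'$. Since $f$ commutes with $d_{b(a)}$, it must fix the isotopy class of each $b(a)$.

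First I would eliminate the case that $f$ is reducible and non-trivial. The canonical reduction system $\sigma(f)$ would then be a non-empty multicurve in $S'$ stabilized setwise by every element of $Z(f)$, and in particular by $\varphi(\pmcg(S))$, contradicting the irreducibility of $\varphi$. So $f$ must be identity, pseudo-Anosov, or of finite order. For the pseudo-Anosov case, the centralizer of a pseudo-Anosov in $\pmcg(S')$ is virtually cyclic; choosing non-separating $a, \tilde{a} \subset S$ with $i(a, \tilde{a}) = 1$, their Dehn twists satisfy the braid relation, and so do $\varphi(d_a) = d_{b(a)}^{\pm 1}$ and $\varphi(d_{\tilde{a}}) = d_{b(\tilde{a})}^{\pm 1}$. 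Theorem \ref{lema:trenzas} then forces $i(b(a), b(\tilde{a})) = 1$. Two Dehn twists about curves intersecting once generate a copy of the braid group $B_3$, which is not virtually cyclic, contradicting $\varphi(\pmcg(S)) \subset Z(f)$.

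The main obstacle is to rule out non-trivial finite-order $f$. After replacing $f$ by a suitable power, assume its order is a prime $p$. The Birman-Hilden sequence \eqref{eq:hom_birman_hilden} yields a map $\pi: Z(f) \to \mcg^\pm(Q)$ with kernel $\langle f \rangle$, and the composition $\pi \circ \varphi: \pmcg(S) \to \mcg^\pm(Q)$ is non-trivial (as each $\pi(d_{b(a)})$ is non-trivial in $\mcg^\pm(Q)$) and inherits irreducibility: a multicurve of $Q$ fixed setwise by the image would lift along $S' \to S'/\langle f\rangle$ to an $f$-invariant multicurve of $S'$ preserved setwise by $\varphi(\pmcg(S))$, again contradicting the irreducibility of $\varphi$. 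To close the argument one must analyze the kernel $K = \varphi^{-1}(\langle f \rangle \cap \varphi(\pmcg(S)))$, a normal subgroup of $\pmcg(S)$ with $\varphi(K)$ cyclic of order dividing $p$; combining the triviality of abelian quotients of $\pmcg(S)$ (Theorem \ref{thm:abelianization_pmcg}) with the lower bound of $2^{g-1}(2^g-1)$ on nontrivial finite actions (Theorem \ref{thm:berrick_gebhardt_paris}) one can force either $f \in \varphi(\pmcg(S))$, in which case $f = \varphi(h)$ for some $h$ with $[h, \pmcg(S)] \subset \ker\varphi$, and the triviality of the center of $\pmcg(S)$ for $g \geq 3$ should yield $f = \mathrm{id}$; or $f \notin \varphi(\pmcg(S))$, in which case $\pi$ is injective on $\varphi(\pmcg(S))$ and the problem descends via Lemma \ref{lemma:birman_hilden_bounds} to a surface $Q$ of strictly smaller complexity, allowing an induction on genus. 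This finite-order step is the technical heart of the argument.
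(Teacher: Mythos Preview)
Your reducible and pseudo-Anosov cases are fine, but the finite-order case has a genuine gap and the proposed induction does not close. In Case~1 ($f=\varphi(h)\in\varphi(\pmcg(S))$), you only obtain $[h,\pmcg(S)]\subset\ker\varphi$; triviality of the center of $\pmcg(S)$ does \emph{not} force $h$ central unless you already know $\ker\varphi$ is trivial, which you do not. In Case~2, the descent to $Q$ is problematic on several fronts: you have no bound on the genus $g'$ of $S'$, so Lemma~\ref{lemma:birman_hilden_bounds} gives $Q$ up to $2g'+2$ punctures, and Theorem~\ref{thm:berrick_gebhardt_paris} cannot force the image to fix them; you also need to check that $\pi\circ\varphi$ lands in $\pmcg(Q)$ (not just $\mcg^\pm(Q)$) and remains twist-preserving, neither of which is immediate; and the induction has no clear base case since the lemma places no hypothesis on $g'$.

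The paper avoids Nielsen--Thurston entirely and argues directly via the Alexander method. You already observed that $f$ fixes each curve $b$ with $d_b^{\pm1}=\varphi(d_a)$. The key additional step is to show $f$ fixes each such $b$ \emph{with orientation}: pick three disjoint non-separating curves $a_1,a_2,a_3$ each meeting $a$ once with $S\setminus(a_1\cup a_2\cup a_3)$ connected (here $g\geq 3$ is used); their images $b_1,b_2,b_3$ are distinct, disjoint, and each meets $b$ once, so $f$ fixes the three intersection points $b\cap b_i$ and hence the orientation of $b$. Now take a Humphries generating set $H$; irreducibility of $\varphi$ forces $\varphi_*(H)$ to fill $S'$, any two of its curves meet in at most one point, and no three pairwise intersect. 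The Alexander method then gives $f=\mathrm{id}$ directly. This is both shorter and avoids all the Birman--Hilden machinery.
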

\begin{proof}
	First, we claim that
	\begin{claim}\label{claim:auxiliar}
		Let $a$ be a non-separating curve on $S$ and let $d_b^\epsilon = \varphi(d_a)$. If $f$ centralizes $\varphi(\pmcg(S))$, then $f(b)=b$. Even more,  $f$ preserves the orientation of $b$. 
	\end{claim}
	\begin{proof}[Proof of Claim \ref{claim:auxiliar}]
		
		Since $[f,\varphi(d_a)]=1$, then $f(b)=b$. To show $f$ fixes the orientation of  $b$, consider $a_1,a_2,a_3$ three distinct  disjoint \new{non-separating} curves on  $S$ such that $i(a, a_i)=1$ for $i=1,2,3$ \new{and such that $S\setminus (a_1\cup a_2\cup a_3)$ is connected}, \new{which exist on $S$ since it has genus $g\geq 3$}. Denote the images by   $d_{b_i}^\epsilon = \varphi(d_{a_i})$ for $i=1,2,3$. \new{By Lemma \ref{lemma:exercise}},  the curves $b, b_1, b_2, b_3$ are distinct.  Moreover, by Lemma \ref{lemma:preserva_giros}, the curves  $b_i$ are disjoint and $i(b,b_i)=1$ for $i=1,2,3$. Now, we may pick a representative  $\phi$ of $f$ and minimally intersecting representatives  $\beta, \beta_i$ of the curves $b, b_i$ such that  $\phi(\beta)=\beta$ and $\phi(\beta_i)=\beta_i$ for $i=1,2,3$. We have that $\phi$ fixes the graph  $\beta \cup \beta_1 \cup \beta_2\cup \beta_3$ and, since $\phi$ also fixes the curves, it follows that $\phi$ fixes the intersection points  $\beta \cap \beta_i$ for $i=1,2,3$. In particular, $\phi$ fixes the orientation of $\beta$, which implies $f$ fixes the orientation of $b$.
	\end{proof}
	
	Continuing with the proof of Lemma \ref{lema:preserva_giros_centralizador_trivial}, let  $\mathcal{H}$ be a Humphries generating set of  $\pmcg(S)$ and let $H=\{c|\:d_c\in \mathcal{H}\}$. We denote by  $\varphi_* (H)$ the set  
	\[ \{b|\:  d_b^\epsilon= \varphi(d_a)\text{ for }a\in H \}. \]
	
	Since $\varphi$ is irreducible, the set $\varphi_*(H)$ fills $S'$. Moreover, any two curves  $b_1, b_2 \in \varphi_*(H)$ satisfy either $i(b_1,b_2)=0$ or $i(b_1, b_2)=1$\new{,  and no three distinct curves intersect pairwise}. By the previous claim, $f$ fixes every curve  $b_i\in \varphi_*(H)$ with orientation, so the Alexander Method  \cite[Proposition 2.8]{farb_primer_2012}   yields that   $f$ is the identity. That is, the only centralizing element of the image is the identity.
\end{proof}

As a consequence of Lemma \ref{lema:preserva_giros_centralizador_trivial}, we obtain the following result.
\begin{proposition}\label{prop:irred_nofactoriza__nofronteras}
	Let $S, S'$ be two orientable finite-type surfaces, let $g\geq 3$ be the genus of $S$ and let $\varphi:\pmcg(S)\to \pmcg(S')$ be a twist-preserving homomorphism. If $\varphi$ is irreducible and does not factor, then  $\partial S = \partial S'=\emptyset$.
\end{proposition}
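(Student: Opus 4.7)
The plan is to derive both boundary conditions by applying Lemma \ref{lema:preserva_giros_centralizador_trivial} twice, exploiting the fact that Dehn twists about boundary-parallel curves are central in the corresponding mapping class group. Each direction will be an argument by contradiction.

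First, to show $\partial S' = \emptyset$: I will assume there exists a boundary component of $S'$ and take a curve $b$ parallel to it. Then $d_b$ is central in $\pmcg(S')$, so in particular it centralizes $\varphi(\pmcg(S))$. By Lemma \ref{lema:preserva_giros_centralizador_trivial} this would force $d_b = \text{id}$, contradicting the fact that $d_b$ has infinite order. Hence $\partial S' = \emptyset$.

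Second, to show $\partial S = \emptyset$: I will again argue by contradiction. Suppose $\partial S \neq \emptyset$ and let $a$ be a curve parallel to a boundary component of $S$. Then $d_a$ is central in $\pmcg(S)$, so $\varphi(d_a)$ centralizes $\varphi(\pmcg(S))$, and Lemma \ref{lema:preserva_giros_centralizador_trivial} forces $\varphi(d_a) = \text{id}$. Recall that the boundary deletion map $\sigma:\pmcg(S) \to \pmcg(\overline{S})$, where $\overline{S}$ is obtained by gluing a once-punctured disk along the chosen boundary component, has kernel exactly $\langle d_a \rangle$. Since $d_a \in \ker\varphi$, the homomorphism $\varphi$ descends to a map $\psi:\pmcg(\overline{S}) \to \pmcg(S')$ with $\varphi = \psi \circ \sigma$, contradicting the hypothesis that $\varphi$ does not factor.

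No substantive obstacle is anticipated: the entire argument is a direct application of the triviality of the centralizer (Lemma \ref{lema:preserva_giros_centralizador_trivial}) together with the standard description of the kernel of the boundary deletion map recalled in Section \ref{sec:homs}. The only conceptual point worth emphasizing is that the twist-preserving hypothesis is not invoked here on $d_a$ or $d_b$ themselves (which are separating and hence outside the scope of the hypothesis), but rather it enters only through the fact that it powers the centralizer lemma.
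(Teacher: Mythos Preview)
Your proof is correct and essentially matches the paper's. The only difference is cosmetic: for $\partial S' = \emptyset$ the paper observes directly that a boundary-parallel curve $b$ in $S'$ is fixed by all of $\pmcg(S')$, hence by the image of $\varphi$, so $\varphi$ would be reducible; you instead pass through Lemma \ref{lema:preserva_giros_centralizador_trivial} via the centrality of $d_b$, which is equivalent and equally valid. The argument for $\partial S = \emptyset$ is identical to the paper's.
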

\begin{proof}
	If $\partial S' \ne \emptyset$, then $\varphi$ is reducible.  Thus, $\partial S' = \emptyset$.
	
	If $\varphi$ is irreducible then the image of the centralizer is trivial and we have that  $\varphi(d_a)=\text{id}$ for every curve $a\in \partial S$. Therefore, $\varphi$ factors through the boundary deletion homomorphism. Since $\varphi$ does not factor, we conclude  $\partial S=\emptyset$.
\end{proof}

Jointly,  Proposition \ref{prop:preserva_giros_basta_con_irreducibles}, Proposition  \ref{prop:irred_nofactoriza__nofronteras} and Lemma \ref{lema:preserva_giros_sin_factorizar}, imply the following simplification for the proof of Theorem  \ref{thm:preservargiros_embinducido}.

\begin{corollary}\label{cor:simplification}
	If Theorem \ref{thm:preservargiros_embinducido} holds for surfaces $\partial S = \partial S'= \emptyset$ with  irreducible maps $\varphi$ that do not factor, then Theorem \ref{thm:preservargiros_embinducido} holds in full generality. 
\end{corollary}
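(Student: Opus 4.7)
The strategy is simply to compose the three reductions established earlier in the section—Proposition \ref{prop:preserva_giros_basta_con_irreducibles}, Lemma \ref{lema:preserva_giros_sin_factorizar}, and Proposition \ref{prop:irred_nofactoriza__nofronteras}—in a carefully chosen order. The subtlety is that the reductions in the first two results are not jointly \emph{independent}: cutting along a maximal reducing multicurve (Proposition \ref{prop:preserva_giros_basta_con_irreducibles}) might produce an irreducible homomorphism that factors, while killing a factorization (Lemma \ref{lema:preserva_giros_sin_factorizar}) might a priori introduce reducibility. One therefore has to sequence the reductions so that each invocation is legitimate.

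First, I would observe that Proposition \ref{prop:irred_nofactoriza__nofronteras} together with the hypothesis of the corollary immediately yields Theorem \ref{thm:preservargiros_embinducido} for every \emph{irreducible non-factoring} twist-preserving homomorphism: any such map forces $\partial S = \partial S' = \emptyset$, placing us inside the assumed case. Call this Fact~1.

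Next, I would upgrade Fact~1 to: Theorem \ref{thm:preservargiros_embinducido} holds for every \emph{irreducible} twist-preserving homomorphism (now allowing factorization). For this I would rerun the inductive argument from the proof of Lemma \ref{lema:preserva_giros_sin_factorizar}, restricted to irreducible maps. Given an irreducible $\varphi$ that factors as $\varphi = \psi \circ \sigma$ with $\psi$ non-factoring and $\sigma$ surjective (a forgetful or boundary-deletion map), the image $\psi(\pmcg(\overline{S}))$ equals $\varphi(\pmcg(S))$, so $\psi$ is irreducible; the proof of the lemma already verifies that $\psi$ is twist-preserving. Hence $\psi$ is irreducible non-factoring, so Fact~1 applies to $\psi$, and therefore $\varphi = \psi \circ \sigma$ is induced by the composition of the embeddings inducing $\psi$ and $\sigma$. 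Iterating yields Fact~2: Theorem \ref{thm:preservargiros_embinducido} holds for all irreducible twist-preserving $\varphi$.

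Finally, I would apply Proposition \ref{prop:preserva_giros_basta_con_irreducibles} to pass from irreducible to general twist-preserving homomorphisms; its hypothesis is exactly Fact~2, since the auxiliary $\varphi_j$ produced in its proof is merely known to be irreducible (not necessarily non-factoring). The main ``obstacle'' is purely organizational: one must establish the irreducible case \emph{in full generality} before invoking Proposition \ref{prop:preserva_giros_basta_con_irreducibles}, which is why the middle step, where Lemma \ref{lema:preserva_giros_sin_factorizar}'s argument is specialized to irreducible maps, is indispensable and must come before the final reduction.
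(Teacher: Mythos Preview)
Your proposal is correct and follows the same approach as the paper, which simply asserts that the corollary follows jointly from Proposition~\ref{prop:preserva_giros_basta_con_irreducibles}, Lemma~\ref{lema:preserva_giros_sin_factorizar}, and Proposition~\ref{prop:irred_nofactoriza__nofronteras} without spelling out the order of the reductions. Your observation that irreducibility passes from $\varphi$ to $\psi$ because $\sigma$ is surjective (so the images agree) is exactly the point needed to make the sequencing work, and it is left implicit in the paper.
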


From here, the rest of the proof of Theorem  \ref{thm:preservargiros_embinducido} is completely analogous to the proof presented in  \cite[Section 10]{aramayona_homomorphisms_2013}, which in reality only uses that the map is twist preserving and Corollary \ref{cor:simplification}. We warn the reader that  the terminology of Aramayona-Souto is slightly different from ours: they call \emph{weak embeddings} to our embeddings and they call \emph{embeddings} to what we call embeddings inducing a map between mapping class groups.

\section{Multitwist-preserving homomorphisms}\label{sec:preserva_multigiros}
In this section we prove Theorem \ref{thm:preserva_multis_emb}.  We introduce a definition and restate the theorem here for convenience.

\begin{definition}[Multitwist-preserving homomorphism]\label{def:multitwist_preserving}
	Let $S,S'$ be two orientable finite-type surfaces and let  $\varphi:\pmcg(S) \to \pmcg(S')$ be a homomorphism. The map $\varphi$ is \emph{multitwist preserving} if the image of a non-separating Dehn twist is a non-trivial multitwist.  
\end{definition}

The name is justified since these maps send multitwists with non-separating components to multitwists. \new{Note that if the image of a single non-separating Dehn twist is a multitwist with $k$ components, then the image of any non-separating Dehn twist is also a multitwist with $k$ components, since they are all conjugate to each other. Therefore, to check that a map is multitwist preserving it is enough to check it on a single non-separating Dehn twist. In this section we show any such $\varphi$ is induced by a multi-embedding:} 

\begingroup
\def\thetheorem{\ref{thm:preserva_multis_emb}}
\begin{theorem}
		Let $S,S'$ be two orientable finite-type surfaces and let $g\geq 3$ be the genus of $S$. If   $\varphi:\pmcg(S)\to \pmcg(S')$ is a multitwist-preserving map, then  $\varphi$  is induced by a multi-embedding.
\end{theorem}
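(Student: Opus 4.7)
The plan is to reduce the multitwist-preserving case to the twist-preserving case, which is already handled by Theorem~\ref{thm:preservargiros_embinducido}, by decomposing $\varphi$ into several twist-preserving factors. Fix a non-separating curve $a_0$ on $S$ and write $\varphi(d_{a_0})=d_{b_1}^{n_1}\cdots d_{b_k}^{n_k}$. Since all non-separating Dehn twists are conjugate and $\varphi$ is multitwist-preserving, every non-separating Dehn twist is sent to a multitwist with exactly $k$ components. I aim to construct $k$ twist-preserving homomorphisms $\varphi_1,\ldots,\varphi_k:\pmcg(S)\to\pmcg(S')$ such that, on every non-separating $d_a$, the image $\varphi(d_a)=\prod_i \varphi_i(d_a)$ recovers $\varphi$. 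Theorem~\ref{thm:preservargiros_embinducido} will then produce an embedding $\iota_i:S\hookrightarrow S'$ inducing each $\varphi_i$, and the collection $\mathcal I=\{\iota_1,\ldots,\iota_k\}$ will form the desired multi-embedding.

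The key tool for building the factors is the braided-multitwists theorem (Theorem~\ref{lema:trenzas}). Whenever $i(a,a')=1$ with $a,a'$ non-separating, the images $\varphi(d_a)$ and $\varphi(d_{a'})$ satisfy the braid relation, so their components pair up: some components are common to both multitwists with the same sign (forming a residual multitwist $m_C$), while the remaining components split into pairs of curves intersecting once with identical sign $\pm 1$. Because $\varphi(d_a)$ and $\varphi(d_{a'})$ have the same total number $k$ of components, this yields a canonical bijection between their sets of components. Fixing a Humphries generating set $\mathcal H$ for $\pmcg(S)$ whose underlying intersection-one graph is connected, I propagate a labelling of the components of $\varphi(d_{a_0})$ by $\{1,\ldots,k\}$ along $\mathcal H$ via these bijections, and then define $\varphi_i(d_c):=d_{b_i^{(c)}}^{\varepsilon_i^{(c)}}$ on generators, where $b_i^{(c)}$ is the $i$-th labelled component of $\varphi(d_c)$ with its sign $\varepsilon_i^{(c)}\in\{-1,+1\}$.

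The main obstacle, and where the bulk of the work lies, is showing that each $\varphi_i$ extends from $\mathcal H$ to a genuine homomorphism of $\pmcg(S)$, independent of the propagation path. Commutation and braid relations among elements of $\mathcal H$ descend strand-wise: disjointness of the full set of components implies commutation on each strand, and Theorem~\ref{lema:trenzas} yields braiding on each strand. For the remaining defining relations of $\pmcg(S)$ (chain and lantern relations), one lifts the identity from the image $\varphi(\pmcg(S))$ and separates components; iterated application of Theorem~\ref{lema:trenzas} forces pairwise disjointness of the components appearing in the relation, so the identity descends componentwise. Path-independence of the labelling amounts to triviality of a representation of $\pmcg(S)$ into the symmetric group on $k$ letters, which I expect to control using the rigidity of $\pmcg(S)$ (Theorem~\ref{thm:berrick_gebhardt_paris}) together with the finite-abelian-subgroup bound of Theorem~\ref{thm:subgrupos_finitos_abelianos}, or alternatively to absorb by re-indexing the strands.

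Once each $\varphi_i$ is established as a homomorphism, Lemma~\ref{lemma:preserva_giros} promotes it to a twist-preserving map, and Theorem~\ref{thm:preservargiros_embinducido} supplies embeddings $\iota_i:S\hookrightarrow S'$ inducing them. Pairwise disjointness of the subsurfaces $\iota_i(S)$ follows from pairwise disjointness of the components of $\varphi(d_a)$ for a filling family of non-separating curves $a$ together with an Alexander-method argument. The puncture condition of Lemma~\ref{lema:colec_emb_induc} is verified by analysing the image under each $\iota_i$ of a small loop around a puncture of $S$: it will either bound a disk, bound a once-punctured disk, or match with an oppositely oriented loop from another strand, precisely the three alternatives of Lemma~\ref{lema:colec_emb_induc}. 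Finally, $\widehat{\mathcal I}$ and $\varphi$ agree on the Humphries generators of $\pmcg(S)$ and therefore on the whole group, completing the proof.
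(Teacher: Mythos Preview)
Your overall strategy---split $\varphi$ into $k$ twist-preserving pieces and invoke Theorem~\ref{thm:preservargiros_embinducido} on each---matches the paper's, but the mechanism you propose for producing the pieces has real gaps, and the paper takes a different (and cleaner) route to construct them.

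The paper does \emph{not} attempt to define $\varphi_i$ on generators and then verify relations. Instead, it first proves (Lemma~\ref{lema:preserva_mts_fija_curva}) that whenever $k\ge 2$ the homomorphism $\varphi$ is reducible. Choosing a maximal reducing multicurve $\eta$ and observing (Lemma~\ref{lema:preserva_mts_masque_reducible}) that $\varphi$ fixes each curve of $\eta$ with orientation, one composes with the cutting map $\cut_\eta$ and projects to each component $R_i'$ of $S'\setminus\eta$. Each projection $\varphi_i=\pi_i\circ\cut_\eta\circ\varphi$ is then automatically a homomorphism; maximality of $\eta$ forces it to fix no curve, so by Lemma~\ref{lema:preserva_mts_fija_curva} again it is twist-preserving, and Theorem~\ref{thm:preservargiros_embinducido} yields embeddings $\iota_i:S\hookrightarrow R_i'\subset S'$ with pairwise disjoint images for free. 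The remaining work is to check that the multi-embedding $\{\iota_i\}$ actually induces $\varphi$; this is where Lemmas~\ref{lema:preservar_mults_no_comparten_curvas_1} and~\ref{lema:preservar_mults_no_comparten_curvas_2} (no shared components, every component meets one on the other side) are used, together with a chain/lantern computation, to verify the hypotheses of Lemma~\ref{lema:colec_emb_induc}.

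Your proposal runs into trouble at two points. First, path-independence of the labelling: you suggest controlling the resulting permutation action via Theorem~\ref{thm:berrick_gebhardt_paris}, but that bound is $2^{g-1}(2^g-1)$ and there is no hypothesis on $g'$ in this theorem, so $k$ can be arbitrarily large; ``re-indexing the strands'' does not obviously salvage this, since grouping strands into orbits no longer yields single Dehn twists. Second, and more seriously, even granting a consistent labelling you must show each $\varphi_i$ extends from $\mathcal H$ to a homomorphism of $\pmcg(S)$. Your sketch (``lift the identity from the image and separate components'') is not a proof: a product of multitwists equalling the identity does not force each labelled factor to equal the identity unless you already know the factors are supported on disjoint subsurfaces, which is exactly what you are trying to establish. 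The paper sidesteps both issues by constructing $\varphi_i$ as a composition of maps that are homomorphisms by definition.
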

\addtocounter{theorem}{-1}
\endgroup

To prove Theorem \ref{thm:preserva_multis_emb} we may assume that the image of a non-separating Dehn twist is a multitwist with at least two components. Indeed, if the image has only one component, the statement is equivalent to Theorem \ref{thm:preservargiros_embinducido} above. 

We briefly sketch the strategy of the proof: First, we show $\varphi$ is a reducible map. Then, by cutting along a maximal reducing multicurve, we obtain a set of twist-preserving maps $\{\varphi_i\}$. Using Theorem \ref{thm:preservargiros_embinducido}, we deduce that each $\varphi_i$ is induced by an embedding $\iota_i$. To finish, we check that $\varphi$ is induced by the multi-embedding $\{\iota_i\}$. 

As  mentioned, the first objective is to show $\varphi$ is reducible.

\begin{lemma}\label{lema:preserva_mts_fija_curva}
	Let $S,S'$ be two orientable finite-type surfaces and let $g\geq 1$ be the genus of  $S$. If  $\varphi:\pmcg(S) \to \pmcg(S')$  sends non-separating Dehn twists to multitwists with at least two components, then $\varphi$ fixes a curve. In particular, $\varphi$ is reducible.
\end{lemma}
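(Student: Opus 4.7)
The strategy is to produce a curve on $S'$ fixed by $\varphi(d_a)$ for every non-separating curve $a$ on $S$; since non-separating Dehn twists generate $\pmcg(S)$, such a curve is fixed by the entire image of $\varphi$. It will therefore suffice to exhibit a curve $\gamma$ on $S'$ that is a component of $A_a$ for every non-separating $a$, where $m_{A_a} = \varphi(d_a)$.

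I would begin by choosing non-separating curves $a_1, a_2$ in $S$ with $i(a_1, a_2) = 1$. Because $d_{a_1}$ and $d_{a_2}$ satisfy the braid relation, so do their images $m_{A_1}$ and $m_{A_2}$. Theorem \ref{lema:trenzas} then yields a decomposition
\[
m_{A_1} = d_{x_1}^{n_1}\cdots d_{x_k}^{n_k}\, m_C, \qquad m_{A_2} = d_{y_1}^{n_1}\cdots d_{y_k}^{n_k}\, m_C,
\]
with $k + |C| = N \geq 2$, where $N = |A_1| = |A_2|$, the paired curves $x_i, y_i$ intersect once and all other listed curves are pairwise disjoint. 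The plan is to show, first, that $|C|\geq 1$, and second, that the common multitwist $m_C$ persists as $a_2$ is varied.

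For the persistence, take a third non-separating curve $a_3$ with $i(a_1, a_3) = 0$ and $i(a_2, a_3) = 1$. Since $m_{A_1}$ and $m_{A_3}$ commute, $A_1 \cup A_3$ forms a multicurve. Applying Theorem \ref{lema:trenzas} to $(a_2, a_3)$ produces another common multitwist $m_{C'}$, and by the uniqueness of multitwist expressions, any component $\gamma \in C$ must also lie in $C'$: for if $\gamma$ appeared instead among the paired curves of the $(a_2, a_3)$ decomposition, it would have intersection one with some component of $A_3$ while simultaneously lying in $A_1$, contradicting the multicurve property of $A_1 \cup A_3$. Hence $C \subset A_1 \cap A_2 \cap A_3$. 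Iterating this argument along sequences of non-separating curves---using connectivity of the non-separating curve graph where the relevant moves are ``change one coordinate to a disjoint curve intersecting the other once''---propagates any $\gamma \in C$ to a common component of $A_a$ for every non-separating $a$, thereby producing the desired fixed curve.

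The main obstacle is to verify $|C| \geq 1$. The degenerate possibility $k = N$, $|C| = 0$ would mean that the $N$ components of $A_1$ pair bijectively with those of $A_2$ via once-intersecting partners, and the above iteration becomes vacuous. I would handle this case by propagating the decomposition along a maximal chain of non-separating curves in $S$: the $|C|=0$ hypothesis, combined with the uniqueness of multitwist decomposition and the multicurve property forced by disjoint chain members, would produce $N$ pairwise disjoint chains of non-separating curves in $S'$ of the same length. Taking disjoint regular neighborhoods of these chains gives $N$ disjoint subsurfaces supporting $\varphi(\pmcg(S))$, and the boundary components of this support furnish curves on $S'$ fixed by every $\varphi(d_a)$, again yielding reducibility.
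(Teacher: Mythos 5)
Your overall route---apply Theorem \ref{lema:trenzas} to braided pairs of images, split according to whether the common multitwist is trivial, and in the trivial case propagate the pairing to obtain disjoint copies of a configuration whose regular neighborhood has invariant boundary---is essentially the paper's: its Claim \ref{claim:aux_v2} unifies your two cases, the ``loop'' components of the union of the image multicurves being exactly your common components, and the remaining components being copies of the generating configuration. Your treatment of the case $|C|\geq 1$, including the persistence argument via commutativity and uniqueness of multitwist decompositions, is correct (and note that you only need to reach the curves of a single generating set, not every non-separating curve, which makes the connectivity step routine).

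The gap is in the case $|C|=0$. You propagate only along a maximal chain, but the Dehn twists about a chain do not generate $\pmcg(S)$: for $g\geq 3$ they generate a proper (hyperelliptic-type) subgroup, and on a punctured surface they miss the punctures altogether. So the $N$ disjoint regular neighborhoods of the image chains are only known to support the image of that subgroup, and the assertion that their boundary curves are ``fixed by every $\varphi(d_a)$'' does not follow. Concretely, if $a$ is the extra Humphries curve meeting one chain curve once, Theorem \ref{lema:trenzas} forces each component of $A_a$ to cross exactly one component of the corresponding image multicurve once, but such a component may exit the chain neighborhood, in which case $\varphi(d_a)$ moves its boundary. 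The repair is exactly what the paper does: run the propagation over all curves of a Humphries generating set and take $Z$ to be a regular neighborhood of the union of all the resulting image multicurves, so that every generator's image is supported in $Z$ and hence the whole image of $\varphi$ preserves $\partial Z$. You should also justify that some boundary curve of $Z$ is essential; this is where the hypothesis of at least two components enters, since a connected $S'$ cannot contain two disjoint nonempty subsurfaces all of whose boundary curves bound disks or once-punctured disks.
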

\begin{proof}
	Let $\mathcal{H}$ be a Humphries generating set for $\pmcg(S)$, let $H=\{c|\:d_c\in \mathcal{H}\}$ be the corresponding set of curves and denote by  $C$ be the set of components of the multitwist $\varphi(d_c)$ for each $d_c\in \mathcal{H}$. \neww{We use the following fact and defer its proof for later. }

	\begin{claim}\label{claim:aux_v2}
		\neww{Each connected component of $\bigcup_{d_c\in \mathcal{H}} C$  is
		graph-isomorphic either to  $\bigcup_{c\in H} c$ or to a loop. Even more, the number of connected components of $\bigcup_{d_c\in \mathcal{H}} C$ (as a graph) coincides with the number of components in the multitwist $\varphi(d_a)$, where $d_a$ is any non-separating Dehn twist in $\pmcg(S)$.}
	\end{claim}
	
	\new{ Now, consider $Z$ a closed regular neighborhood of the curves in the image $\bigcup_{d_c\in \mathcal{H}} C$. Since $Z$ has as many connected components as $\bigcup_{d_c\in \mathcal{H}} C$, which is at least two by hypothesis, then there is at least one essential curve in the boundary $\partial Z$}. Observe, $\varphi(\pmcg(S))$ fixes each connected component of $Z$ and each curve in the boundary $\partial Z$. 

	\neww{We are only left to proof the claim.}
	\begin{proof}[Proof of Claim \ref{claim:aux_v2}]
		\neww{For any curve $c_i\in H$ we denote the multitwist in the image by $m_{C_i}=\varphi(d_{c_i})$. Consider any curve $c_1\in H$ and a curve $x_1\in C_1$. We study the connected component of $\bigcup_{d_c\in \mathcal{H}} C$ containing $x_1$. }

		\neww{Choose any curve $c_2\in H$ such that $i(c_1, c_2)=1$. Since $d_{c_1}, d_{c_2}$ satisfy the braid relation (i.e, are braided), then $m_{C_1}, m_{C_2}$ are also braided. By Theorem \ref{lema:trenzas}, either $x_1\in C_2$ or there exists $x_2\in C_2$ such that $i(x_1, x_2)=1$. We treat these two cases separately. }

		\neww{First, assume that $x_1\in C_2$. Now, choose a curve $c_3\in H\setminus \{c_1, c_2\}$ that intersects either $c_1$ or $c_2$. Without loss of generality assume that $i(c_2, c_3)=1$, and so $i(c_1, c_3)=0$. Since $d_{c_2}, d_{c_3}$ are braided, so are $m_{C_2}$ and $m_{C_3}$. Then, Theorem \ref{lema:trenzas} implies that either $x_1\in C_3$ or there exists $x_3\in C_3$ such that $i(x_1, x_3)=1$. To rule out $i(x_1, x_3)=1$ just note that $d_{c_1}, d_{c_3}$ commute, which implies that $m_{C_1}, m_{C_3}$ commute and in turn $i(C_1, C_3)=0$. In particular, $x_1\in C_1\cap C_2$ does not intersect any curve in $C_3$. As a consequence, we have that $x_1\in C_3$. Repeating the argument over each curve in $x\in H$, yields that $x_1\in C$ for every curve $c\in H$. Since every $C$ is a multicurve, we deduce that $x_1$ is disjoint from every other curve in  $\bigcup_{d_c\in \mathcal{H}} C$, which implies $x_1$ is a connected component isomorphic to a loop. }

		\neww{We deal with the remaining case $i(x_1, x_2)=1$. Observe that $x_1\cup x_2$ is graph-isomorphic to $c_1\cup c_2$. We proceed by induction on the curves of $H$: Choose a curve $c_3\in H$ such that $i(c_1, c_3)=1$ or $i(c_2, c_3)=1$. Now, without loss of generality, we assume $i(c_2, c_3)=1$ and so  $i(c_1, c_3)=0$. As above, we deduce from the condition $i(c_2, c_3)=1$ that either $x_2\in C_3$ or there exists $x_3\in C_3$ such that $i(x_2, x_3)=1$. Note that $i(c_1, c_3)=0$ implies that the multitwists $m_{C_1}, m_{C_3}$ commute and in particular $i(C_1, C_3)=0$. So we conclude that $x_2\not\in C_3$, since $i(x_2, x_1)=1$ and $x_1\in C_1$. We have shown then that there exists $x_3\in C_3$ with $i(x_2, x_3)=1$ and $i(x_1, x_3)=0$. Naturally, $x_1\cup x_2\cup x_3$ is graph-isomorphic to $c_1\cup c_2\cup c_3$, and iterating the argument along the curves in $H$  yields the desired graph isomorphism between $\bigcup_{c_i \in H} x_i$ and $\bigcup_{c_i\in H} c_i$. }
	\end{proof}
	\neww{The proof of the claim completes the proof of Lemma \ref{lema:preserva_mts_fija_curva}.}
\end{proof}

\begin{remark}\label{rmk:v2}
	\neww{As it will be useful on Section \ref{sec:clasificacion}, we highlight that the proof of Claim \ref{claim:aux_v2} uses only the fact that the set of multitwists $\{m_C=\varphi(d_c)|\; c\in H\}$ satisfies the same braiding and commuting relations as the set of Dehn twists $\{d_c|\; c\in H\}$.}
	
	\neww{Also, let us note that $\bigcup_{d_c\in \mathcal{H}} C$ has no connected components isomorphic to loops if and only if the multitwists in the set $\{m_C| \;c \in H\}$ have no common components.} 
\end{remark}

Let $\varphi:\pmcg(S) \to \pmcg(S')$ be a homomorphism as in Theorem  \ref{thm:preserva_multis_emb}. Further, assume $\varphi$ sends  non-separating Dehn twists to  multitwists with at least two components. We reduce $\varphi$ to produce new maps $\varphi_i$ as follows: 

Lemma \ref{lema:preserva_mts_fija_curva} implies that $\varphi$ is reducible and so we may choose a maximal reducing multicurve $\eta$. By Lemma \ref{lema:preserva_mts_masque_reducible},  $\varphi$ fixes each curve in $\eta$ with orientation. Therefore, the image of $\varphi$ is contained in $\mathcal{Z}_0(\eta)$ and we may consider the composition 
\[ 
\pmcg(S) \xrightarrow{\varphi} \mathcal{Z}_0(\eta) \xrightarrow{\cut_\eta} \prod_i \pmcg(R'_i),
 \] 
 where the surfaces $R'_i$ are the connected components of   $S' \setminus \bigcup_{\eta_i \in \eta} \eta_i$. 
 
  For each projection $\pi_i$ to the ith coordinate, we define the map   $\varphi_i$ as 
 \begin{equation}\label{eq_def:varphi_i}
 	 \varphi_i = \pi_i \circ \cut_\eta \circ \varphi.
 \end{equation}
 Since $\varphi$ is multitwist preserving, every nontrivial $\varphi_i$ is also multitwist preserving. Moreover, such $\varphi_i$ fixes no curve by the maximality of $\eta$. So it follows from Lemma \ref{lema:preserva_mts_fija_curva} that every nontrivial $\varphi_i$ is actually twist preserving. We conclude from Theorem \ref{thm:preservargiros_embinducido} that each nontrivial $\varphi_i$ is induced by an embedding $\iota_i:S\xhookrightarrow{} R'_i$. To prove Theorem \ref{thm:preserva_multis_emb} we  show that $\varphi$ is induced by the multi-embedding $\{\iota_i: S \xhookrightarrow{} R'_i \subset S'\}$. 
 
 Before jumping into the proof, we need the next two lemmas.

 \begin{lemma}\label{lema:preservar_mults_no_comparten_curvas_1}
 	Let $S, S'$ be two orientable finite-type surfaces with $S$ of genus $g\geq 3$, let  $\mathcal{H}$  a  Humphries generating set of $\pmcg(S)$ and let $\varphi:\pmcg(S)\to \pmcg(S')$ be a multitwist-preserving map. If $d_a, d_b\in \mathcal{H}$ are two distinct Dehn twists, then  the multitwists $\varphi(d_a), \varphi(d_b)$   have no common components.
 \end{lemma}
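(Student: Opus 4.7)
I would argue by contradiction. Suppose some curve $x$ is a common component of $\varphi(d_a)$ and $\varphi(d_b)$ for distinct $d_a, d_b \in \mathcal{H}$, and for each $c \in H$ write $\varphi(d_c) = m_{C_c}$.

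The first step is to show that $x$ is actually a component of $\varphi(d_c)$ for \emph{every} $c \in H$. The key point is that the multitwists $\{m_{C_c}\}_{c \in H}$ satisfy exactly the same braiding and commuting relations as the generators $\{d_c\}_{c \in H}$, so the dichotomy of Claim \ref{claim:aux_v2} (see Remark \ref{rmk:v2}) applies: each connected component of $\bigcup_{c \in H} C_c$ is graph-isomorphic either to $\bigcup_{c \in H} c$ or to a single isolated vertex. In the first case the isomorphism gives a bijection between $H$ and the curves of the component, so each such curve lies in exactly one $C_c$; in the second case the single curve in question lies in $C_c$ for every $c \in H$, as established in the proof of the claim. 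Because $x \in C_a \cap C_b$ with $a \ne b$, the component of $x$ cannot be of the first type, so $x$ lies in $C_c$ for every $c \in H$. Since all non-separating $d_c$ are conjugate in $\pmcg(S)$ and $x$ is fixed with orientation by $\varphi(\pmcg(S))$ (Lemma \ref{lema:preserva_mts_masque_reducible}), conjugation preserves the power of $d_x$ in $\varphi(d_c)$; hence there is a common integer $k \ne 0$ with $\varphi(d_c) = d_x^k \cdot m_{C'_c}$, where $C'_c := C_c \setminus \{x\}$. By the same conjugation argument, this factorization extends to $\varphi(d_z)$ for every non-separating Dehn twist $d_z$ on $S$.

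The second step contradicts $k \ne 0$ via the lantern relation. For $g \geq 3$ one can realize a lantern $d_{a_1} d_{a_2} d_{a_3} = d_{b_1} d_{b_2} d_{b_3} d_{b_4}$ on $S$ with all seven curves non-separating. Applying $\varphi$ and pulling every $d_x$ to the left (it commutes with each $m_{C'_{\cdot}}$ because those components are disjoint from $x$) yields
\[
d_x^{3k} \cdot A = d_x^{4k} \cdot B, \qquad A := m_{C'_{a_1}} m_{C'_{a_2}} m_{C'_{a_3}}, \quad B := m_{C'_{b_1}} m_{C'_{b_2}} m_{C'_{b_3}} m_{C'_{b_4}},
\]
and hence $A B^{-1} = d_x^k$. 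Pairwise disjointness of the $a_i$'s (and of the $b_j$'s) implies that the factors of $A$ (respectively $B$) commute, so $A$ and $B$ are themselves multitwists with components disjoint from $x$. The identity $A = d_x^k B$ shows that $A$ and $B$ commute, so $A B^{-1}$ is again a multitwist whose components are all disjoint from $x$. Uniqueness of the multitwist decomposition then forces $k = 0$, contradicting the choice of $k$.

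The main obstacle is Step 1, namely showing that a curve shared by two of the $C_c$ must be shared by all of them. I expect this to follow cleanly from Claim \ref{claim:aux_v2}, provided one carefully notes that in the ``full copy'' case the graph isomorphism pins each curve of the component to a unique $C_c$ (otherwise iterating the first-case argument of the claim would collapse the component to a loop). Once Step 1 is in place, Step 2 is a direct consequence of the lantern relation together with the uniqueness of multitwist decompositions.
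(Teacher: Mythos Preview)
Your Step~1 is a reasonable shortcut via Claim~\ref{claim:aux_v2} and Remark~\ref{rmk:v2}; the paper instead proves directly that $\varphi(\pmcg(S))$ fixes every curve of $F=A\cap B$, but your route works once one observes (from the counting part of Claim~\ref{claim:aux_v2}) that each connected component of $\bigcup_c C_c$ meets every $C_c$ in exactly one curve, so a shared curve must be a loop component.

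The genuine gap is in Step~2. In the lantern relation $d_{a_1}d_{a_2}d_{a_3}=d_{b_1}d_{b_2}d_{b_3}d_{b_4}$, the four curves $b_1,\dots,b_4$ (the boundary of the four-holed sphere) are pairwise disjoint, but the three curves $a_1,a_2,a_3$ are \emph{not}: each pair intersects in two points. So your sentence ``Pairwise disjointness of the $a_i$'s \dots\ implies that the factors of $A$ \dots\ commute'' is false, and you cannot conclude that $A=m_{C'_{a_1}}m_{C'_{a_2}}m_{C'_{a_3}}$ is a multitwist with components disjoint from $x$. From the equation $A=d_x^{k}B$ you do learn that $A$ is a multitwist, but only as $d_x^{k}$ times a multitwist supported away from $x$; this carries no information beyond what you are trying to prove. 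Note also that $d_x$ \emph{can} be written as a product of Dehn twists about curves disjoint from $x$ and distinct from $x$ (use a lantern in $S'$ with $x$ as one boundary curve), so there is no ``support away from $x$'' argument available.

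This is exactly why the paper's proof takes a heavier route: it chooses a maximal reducing multicurve $\eta\supset F$, cuts, and invokes Theorem~\ref{thm:preservargiros_embinducido} on the resulting twist-preserving pieces so that the lantern in $S$ is sent to an honest lantern in each $R'_i$, yielding the residual relation $m_{A'}m_{B'}m_{C'}m_{D'}=m_{X'}m_{Y'}m_{Z'}$ directly. Without that (or an equivalent mechanism), the imbalance $3k$ vs.\ $4k$ alone does not force $k=0$.
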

 \begin{proof}
 	Denote the images of $d_a, d_b $ by $\varphi(d_a)=m_A$ and $\varphi(d_b)=m_B$. Also, denote the set of common components of $m_A$ and $m_B$ by $F = A \cap B$.
 	
 	\begin{claim}\label{claim_1_aux}
 		If $f\in \pmcg(S)$, then $\varphi(f)$ fixes each curve in $F$. 
 	\end{claim}
 	\begin{proof}[Proof of Claim \ref{claim_1_aux}]
 		Choose a Humphries generating set $\tilde{\mathcal{H}}$ of $\pmcg(S)$ such that $d_a, d_b \in \tilde{\mathcal{H}}$ and for every $d_c \in \tilde{\mathcal{H}}$ either $i(c,a)=0$ or $i(c, b)=0$. Now, denote the image of $d_c \in \tilde{\mathcal{H}}$ by $m_C=\varphi(d_c)$. Without loss of generality assume that $i(c, a)=0$. This implies the multitwists $m_A$ and $m_C$ commute, and so $i(A, C)=0$. In particular, $i(F, C)=0$ and therefore $m_C$ fixes each curve in $F$. \new{Since each curve in $F$ is fixed by the image of any  $d_c\in \tilde{\mathcal{H}}$} and $\tilde{\mathcal{H}}$ generates $\pmcg(S)$, the claim follows. 
 	\end{proof}
 	
 	We may assume that $i(a,b)=0$. Otherwise,  $i(a,b)=1$ and we may consider $d_c\in \mathcal{H}$ such that $i(c,a)=1$ and $i(c, b)=0$. Since there exists $f\in \pmcg(S)$ with $f(a)=b$ and $f(b)=c$, we have that $ m_A^{\varphi(f)}=m_B$ and $m_B^{\varphi(f)}=m_C$. By the previous claim, we obtain
 	\[ F=\varphi(f)(F) = \varphi(f)(A)\cap \varphi(f)(B)= B \cap C. \]
 	So, we just rename $c$ to $a$ and assume $i(a,b)=0$.  \new{Let us also remark that the complement $S\setminus (a\cup b)$ is connected, since the two Dehn twists $d_a$ and $d_b$ are contained in a Humphries generating set. }
 	
 	Choose non-separating curves $c, d, x, y, z$ such that $\{a, b, c, d, x, y, z \}$ forms a lantern (see \cite[Chapter 5]{farb_primer_2012}), that is, such that 
 	\[ d_a d_b d_c d_d = d_x d_y d_z. \]  
 	Since $S$ has genus $g\geq 3$, we may choose the curves so that the complement $S\setminus (c_1 \cup c_2)$ is connected for any two curves $c_1, c_2 \in \{a, b, c, d, x, y, z \}$. Observe that if $i(c_1, c_2) = 0$, the components in the images $m_{C_1}=\varphi(c_1)$ and $m_{C_2}=\varphi(c_2)$ \new{both contain  $F=A\cap B= C_1\cap C_2$}. To see this, consider $f\in \pmcg(S)$ such that $f(a)=c_1$ and $f(b)=c_2$, and repeat the argument in the previous paragraph. 
 	
 	For each $c_i \in \{ a, b, c, d, x, y, z  \}$ we may write the image of $d_{c_i}$ as
 	\[ \varphi(d_{c_i})=m_{C_i} = m_F m_{C_i'}, \]
 	where $m_F$ is a multitwist with components $F$, $m_{C'_i}$ is a multitwist with components $C'_i=C_i\setminus F$ and $m_{C'_i}$ commutes with $m_F$. Since any two non-separating Dehn twists are conjugate, it follows from Claim \ref{claim_1_aux} that $m_F$ is independent of the curve $c_i$.  Now, considering the lantern relation in the image we deduce that 
 	\[m_{A'} m_{B'}m_{C'}m_{D'} m_F^4= m_{X'}m_{Y'}m_{Z'} m_F^3. \] 
 	
 	Showing that  $F=\emptyset$  is equivalent to showing that $m_F=\text{id}$, which in turn is equivalent to showing that 
 	\begin{equation}\label{eq:linterna_cortada}
 		m_{A'} m_{B'}m_{C'}m_{D'} = m_{X'}m_{Y'}m_{Z'} . 
 	\end{equation}
 	
 	To prove Equation (\ref{eq:linterna_cortada}), consider  a maximal reducing multicurve $\eta$ containing $F$. Also, consider the homomorphisms $\varphi_i: \pmcg(S) \to \pmcg(R'_i)$  defined as in Equation \eqref{eq_def:varphi_i}. Recall that each $\varphi_i$ is induced by an embedding. Therefore, the image of a lantern in $S$ is a lantern in $R'_i$. By identifying the Dehn twists in $\prod_i \pmcg(R_i')$ with  Dehn twists in $\pmcg(S')$, we conclude that 
 	\[ m_{A'} m_{B'}m_{C'}m_{D'} = m_{X'}m_{Y'}m_{Z'}, \] 
 	as we wanted. 
 \end{proof}

 \begin{lemma}\label{lema:preservar_mults_no_comparten_curvas_2}
 	Let $S, S'$ be two orientable finite-type surfaces, let $g\geq 3$ be the genus of $S$ and let $\varphi:\pmcg(S)\to \pmcg(S')$ be a multitwist-preserving map.  If $d_a, d_b$ are two Dehn twists such that $i(a, b)=1$, then every component of $\varphi(d_a)$ intersects exactly  one component of $\varphi(d_b)$.
 \end{lemma}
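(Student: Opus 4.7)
The plan is to apply the characterization of braided multitwists (Theorem \ref{lema:trenzas}) in conjunction with the previous lemma (Lemma \ref{lema:preservar_mults_no_comparten_curvas_1}), which shows that distinct Dehn twists in a Humphries generating set have images whose component multicurves are disjoint.

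First, since $i(a,b)=1$, both $a$ and $b$ are non-separating and the Dehn twists $d_a, d_b$ satisfy the braid relation $d_ad_bd_a = d_bd_ad_b$. Consequently, the images $m_A=\varphi(d_a)$ and $m_B=\varphi(d_b)$ are braided multitwists, so by Theorem \ref{lema:trenzas} there exist curves $a_1,\ldots,a_k,b_1,\ldots,b_k$ and signs $n_i\in\{1,-1\}$ with
\[ m_A = d_{a_1}^{n_1}\cdots d_{a_k}^{n_k}\, m_C, \qquad m_B = d_{b_1}^{n_1}\cdots d_{b_k}^{n_k}\, m_C, \]
where the curves $a_i, b_j$ are pairwise disjoint except that $i(a_i,b_i)=1$, and $m_C$ is a common multitwist fixing every $a_i$ and $b_j$.

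Next, I would argue that $m_C$ is trivial. By the change of coordinates principle, since $g\geq 3$ and $a,b$ are non-separating with $i(a,b)=1$, there exists $f\in\pmcg(S)$ such that $f(a)$ and $f(b)$ both lie in a Humphries generating set $\mathcal{H}$. Conjugation yields $\varphi(d_{f(a)})=\varphi(f)\,m_A\,\varphi(f)^{-1}$ and $\varphi(d_{f(b)})=\varphi(f)\,m_B\,\varphi(f)^{-1}$, whose component multicurves are $\varphi(f)(A)$ and $\varphi(f)(B)$ respectively, where $A, B$ denote the components of $m_A, m_B$. Now Lemma \ref{lema:preservar_mults_no_comparten_curvas_1} gives $\varphi(f)(A)\cap \varphi(f)(B) = \emptyset$, so $A\cap B = \emptyset$ as well. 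Since the components of $m_C$ belong to both $A$ and $B$, we conclude $m_C=\mathrm{id}$.

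It follows that $m_A = d_{a_1}^{n_1}\cdots d_{a_k}^{n_k}$ and $m_B = d_{b_1}^{n_1}\cdots d_{b_k}^{n_k}$ with $i(a_i,b_j)=\delta_{ij}$, so every component $a_i$ of $m_A$ intersects exactly one component of $m_B$, namely $b_i$. The only mildly delicate step is the reduction to a pair in a Humphries generating set, which is handled by the change of coordinates principle; the real content has already been done in Theorem \ref{lema:trenzas} and Lemma \ref{lema:preservar_mults_no_comparten_curvas_1}.
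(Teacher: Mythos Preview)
Your proof is correct and follows essentially the same approach as the paper's: both combine Lemma \ref{lema:preservar_mults_no_comparten_curvas_1} (no shared components) with Theorem \ref{lema:trenzas} (structure of braided multitwists) to force $m_C=\mathrm{id}$ and read off the intersection pattern. The only cosmetic difference is that the paper chooses a Humphries generating set $\mathcal{H}$ directly containing $d_a,d_b$ (which is possible since $i(a,b)=1$), whereas you conjugate the pair into a fixed Humphries set and pull back; these are equivalent manoeuvres.
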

 \begin{proof}
 	Choose a Humphries generating set $\mathcal{H}$ such that $d_a, d_b \in \mathcal{H}$. By the previous lemma, the multitwists $\varphi(d_a)$ and $\varphi(d_b)$ share no components. Moreover, $d_a, d_b$ satisfy the braid relation which implies that   $\varphi(d_a), \varphi(d_b)$  also satisfy the braid relation. It now follows directly from Theorem \ref{lema:trenzas} that every component of $\varphi(d_a)$ intersects one component of  $\varphi(d_b)$.
 \end{proof}
 
We are now ready to complete the proof of Theorem \ref{thm:preserva_multis_emb}.

\begin{proof}[Proof of Theorem \ref{thm:preserva_multis_emb}]
	Fix orientations on $S$ and  $S'$. Let $\eta$ be a maximal reducing multicurve  for $\varphi$. By Lemma \ref{lema:preserva_mts_masque_reducible}, we know that  $\varphi$  fixes each curve in  $\eta$ with orientation. Thus, we can consider the cutting map \[\cut_\eta \circ \varphi: \pmcg(S) \to  \prod_i \pmcg(R'_i),\]
	where $R'_i\subset S'$ are the subsurfaces such that $S' \setminus \eta = \sqcup_i R'_i$. Define \[\varphi_i = \pi_i \circ \cut_\eta \circ \varphi,\] where  $\pi_i$ is the projection to the ith component. Recall that  $\varphi_i$ is either trivial or twist preserving and, by Theorem \ref{thm:preservargiros_embinducido}, each nontrivial $\varphi_i$ is induced by an embedding $\iota_i: S \xhookrightarrow{} R'_i\subset S'$.

	Consider the multi-embedding $\mathcal{I}= \{\iota_i| \: \varphi_i\ne \text{id}\}$. The goal is to prove that $\mathcal{I}$ induces the map  $\varphi$. First, we check that $\mathcal{I}$ satisfies the hypotheses of Lemma \ref{lema:colec_emb_induc}.
	
	Let $n$ be a curve on $S$ homotopic to a puncture. Choose curves $a,b,c$ and $x$ on $S$ such that $i(a, b) = i(b, c)=1$, $i(a, c)=0$ and a closed regular neighborhood  of $a\cup b\cup c$ has boundary curves $n$ and $x$ (see Figure \ref{fig:chain}). Consider the multitwist 
	\begin{equation}\label{eq:construccion_embs}
		m_A = d_{\iota_1(a)}^{\epsilon_1} \dots d_{\iota_k(a)}^{\epsilon_k}, 
	\end{equation}
	where $\{\iota_1, \dots, \iota_k\} = \mathcal{I}$ and $\epsilon_i\in \{-1, 1\}$  depending on whether $\iota_i$ preserves or reverses  orientation. Analogously, we define  $m_B, \,m_C,\,m_X$ and $m_N$. Now, to check that $\mathcal{I}$ satisfies the hypotheses of Lemma \ref{lema:colec_emb_induc} it is enough to see that $m_N=\text{id}\in \pmcg(S')$.
	
	\begin{figure}[h]
		\begin{center}
			\includegraphics[width=0.5\linewidth]{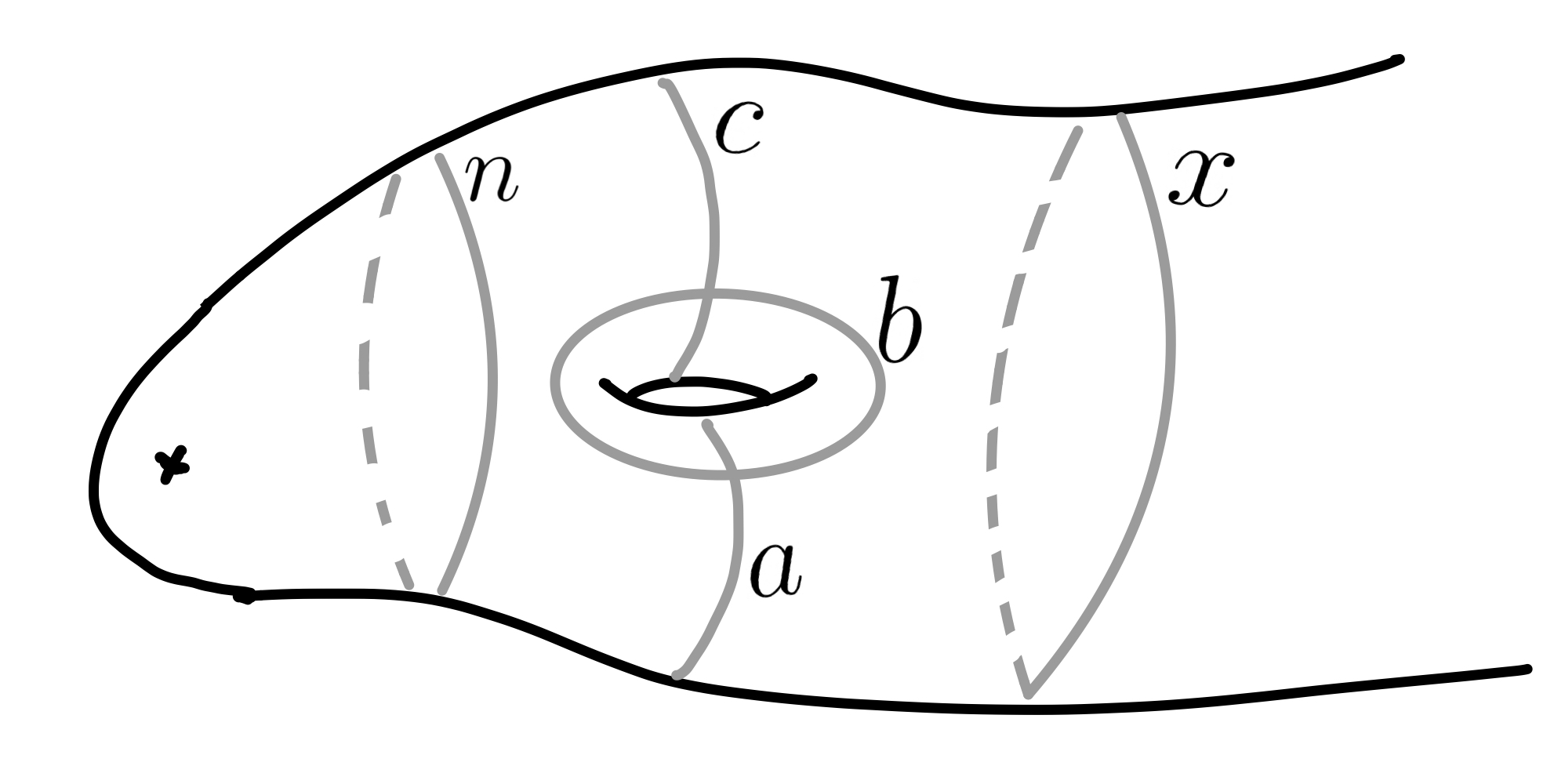}
			\caption{Curves $a, b, c, n, x$ on $S$.}
			\label{fig:chain}
		\end{center}
	\end{figure}

	By definition $\cut_\eta \circ \varphi(d_a) = m_A$. Recall that the kernel of  $\cut_\eta$  is 
	\[ \ker \cut_\eta = \langle \{d_{\eta_i}|\: \eta_i \in \eta\}\rangle.\] 
	Thus, identifying the Dehn twists in $R'_i$ with Dehn twists in $S'$ via inclusion, we have  $\varphi(d_a)=m_A \circ m$ for a multitwist $m$ with components in $\eta$.  By Lemma \ref{lema:preservar_mults_no_comparten_curvas_2},  each component of $\varphi(d_a)$ intersects some component of  $\varphi(d_b)$. However, the components of  $\varphi(d_b)$ do not intersect  $\eta$. We conclude  $m=\text{id}$ and $\varphi(d_a)=m_A$. Similarly, we deduce $\varphi(d_b)=m_B$ and $\varphi(d_c)=m_C$.  Since $x$ is a separating curve, checking that $\varphi(d_X)=m_X$ requires a little extra work. For this purpose, choose curves $\{d, e, y, z\}$ that together with $\{a, c, x\}$ form a lantern (see Figure \ref{fig:linterna}). Since $S$ has genus $g\geq 3$, we may choose the curves so that they are all non-separating except for $x$ and such that $a, c, d, e$  are disjoint and bound a four-punctured sphere. Since the curves  $d,\,e,\, y,\,z$ are non-separating, we can repeat the argument above   and conclude that  $\varphi(d_{d})=m_{D}$, $\varphi(d_{e})=m_{E}$, $\varphi(d_{y})=m_{Y}$ and $\varphi(d_{z})=m_{Z}$, where the multitwists  $m_{*}$ are also defined as in Equation \eqref{eq:construccion_embs}.
	
	\begin{figure}[h]
		\begin{center}
			\includegraphics[width=0.3\linewidth]{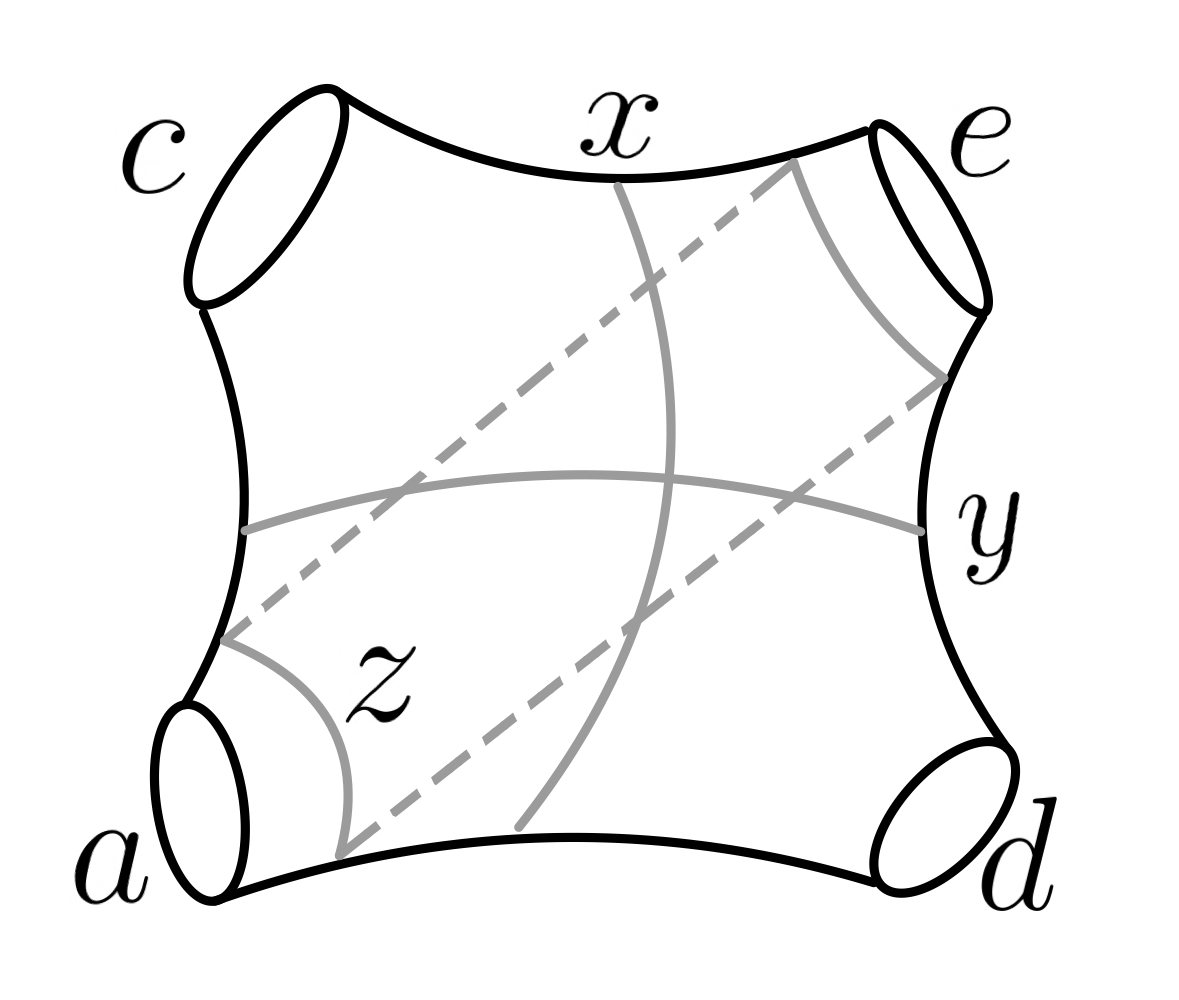}
			\caption{Curves   $a, c, d, e$ and $x, y, z$ in a lantern.}
			\label{fig:linterna}
		\end{center}
	\end{figure}
	
	Since the curves $\{a,\, c,\, d,\, e,\, x, \, y ,\, z \}$ form a lantern, we have
	\[ d_x = d_a d_c d_{d} d_{e} d_y^{-1} d_z^{-1}, \]
	and thus
	\[ \varphi(d_x) = m_A m_C m_{D} m_{E} m_Y^{-1} m_Z^{-1}.\]
	As every $\iota_i\in \mathcal{I}$ is an embedding, it follows from the last equality that $\varphi(d_x)=m_X$.
	
	Now, for each $\iota_i \in \mathcal{I}$, the Dehn twists $\{ d_{\iota_i(a)}, d_{\iota_i(b)}, d_{\iota_i(c)}, d_{\iota_i(x)}, d_{\iota_i(n)}\}$  satisfy the following relation known as the 3-chain relation (see \cite[Proposition 4.12]{farb_primer_2012})
	\[  \left(d_{\iota_i(a)}d_{\iota_i(b)}d_{\iota_i(c)}\right)^4 = d_{\iota_i(x)} d_{\iota_i(n)}. \]
	The same relation in $\pmcg(S)$ yields that  $\left( d_a d_b d_c\right)^4=d_x$. Adding these equations for every $\iota_i \in \mathcal{I}$ we obtain that 
	\begin{align*}
		m_N &= (m_A m_B m_C)^4 m_X^{-1}\\ 
		&= (\varphi(d_a)\varphi(d_b) \varphi(d_c))^4 \varphi(d_x)^{-1} \\ 
		& = \varphi((d_ad_bd_c)^4 d_x^{-1})\\ 
		&= \text{id}.
	\end{align*}
	So $m_N=\text{id}$, as we wanted.
	
	So far, we used Lemma   \ref{lema:colec_emb_induc}  to show the multi-embedding $\mathcal{I}$ induces a homomorphism  $\widehat{\mathcal{I}}:\pmcg(S)\to \pmcg(S')$. We are left to see that $\widehat{\mathcal{I}} = \varphi$. Note that for any Dehn twist $d_a\in \mathcal{H}$ in a Humphries generating set, we have that  $\varphi(d_a) = \widehat{\mathcal{I}}(d_a) \circ m$ for some $m \in \ker \cut_\eta$. Just as before, Lemma \ref{lema:preservar_mults_no_comparten_curvas_2} implies  $m=\text{id}$ and so $\varphi=\widehat{\mathcal{I}} $.
\end{proof}

\section{Classification of homomorphisms}\label{sec:clasificacion}
In this section we prove Theorem \ref{thm:clasificacion}. Notice that by Theorem \ref{thm:preserva_multis_emb}  we may restate Theorem \ref{thm:clasificacion} as follows. 

\begingroup
\def\thetheorem{\ref{thm:clasificacion}}
\begin{theorem}
	Let $S,S'$ be two orientable finite-type surfaces of genus  $g\geq 4$ and $g' \leq \cota$. If $\varphi:\pmcg(S) \to \pmcg(S')$ is a nontrivial homomorphism, then  $\varphi$ is multitwist preserving.
\end{theorem}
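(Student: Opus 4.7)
My plan is to argue by contradiction, leveraging Bridson's theorem together with induction on $g \geq 4$. Assume $\varphi$ is nontrivial but not multitwist preserving. By Theorem \ref{thm:bridson} and Remark \ref{rmk:trivial_homs}, $f := \varphi(d_a)$ is a nontrivial root of a multitwist for any non-separating $a$; since all non-separating Dehn twists are conjugate, the minimal $n$ with $f^n$ a multitwist is independent of $a$, and $n \geq 2$ by hypothesis. After replacing $f$ by a suitable power (and carefully handling orientation issues on $A$), I would assume $n = p$ is prime, $f^p = m_A$, and $f$ fixes each component of $A$ with orientation.

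The heart of the argument is to construct from $\varphi$ a nontrivial homomorphism $\varphi' \colon \pmcg(S_a) \to \mcg^\pm(Q_j)$ with source genus $g-1$ and target genus at most $(g'+1)/2$, where $S_a$ denotes $S$ cut along $a$. First, since $|A|$ is bounded by $3g'$ plus punctures and boundary, which is much smaller than $2^{g-1}(2^g-1)$ under the hypothesis $g' \leq 6 \cdot 2^{g-4}$, Theorem \ref{thm:berrick_gebhardt_paris} forces $\varphi(\pmcg(S))$ to fix each component of $A$ (with orientation, by an argument analogous to Lemma \ref{lema:preserva_mts_masque_reducible}). Then $\cut_A \circ \varphi \colon \pmcg(S) \to \prod_i \pmcg(R_i)$ is well-defined; projecting onto a piece $R_j$ on which $\cut_A(f)$ has order $p$ yields $\psi \colon \pmcg(S) \to \pmcg(R_j)$. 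Next, restrict $\psi$ to $\mathcal{Z}(d_a) \subset \pmcg(S)$, whose quotient by $\langle d_a\rangle$ is essentially $\pmcg(S_a)$. Finally, compose with the Birman-Hilden sequence \eqref{eq:hom_birman_hilden}, which kills $\psi(d_a)$ and lands in $\mcg^\pm(Q_j)$; the genus bound $g(Q_j) \leq (g'+1)/2$ comes from Lemma \ref{lemma:birman_hilden_bounds}.

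The base case $g = 4$ (so $g' \leq 6$) I would treat directly, exploiting the sharp bounds of Theorems \ref{thm:subgrupos_finitos} and \ref{thm:subgrupos_finitos_abelianos}: a genus-$4$ surface carries enough pairwise disjoint non-separating curves to produce commuting families of Dehn twists whose images must embed into a finite abelian group of order $\leq 4g'+4 \leq 28$, forcing a torsion collapse incompatible with a non-multitwist-preserving image. For the inductive step $g \geq 5$ we have $g(S_a) = g - 1 \geq 4$ and $g(Q_j) \leq 3 \cdot 2^{g-4} = 6 \cdot 2^{(g-1)-4}$, so $\varphi'$ is within the scope of the inductive hypothesis; it must therefore be trivial or multitwist-preserving, and in either case, pulling the outcome back through the Birman-Hilden quotient and $\cut_A$ should contradict $\varphi$ being non-multitwist-preserving.

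The main obstacle I anticipate is ensuring that the reduction $\varphi'$ genuinely inherits the non-multitwist-preserving character of $\varphi$—that is, that the images of Dehn twists $d_c$ for curves $c$ on $S$ disjoint from $a$ survive through the composition of $\cut_A$ with the Birman-Hilden quotient as proper roots of multitwists in $\mcg^\pm(Q_j)$, rather than accidentally becoming multitwists or trivial. Secondary difficulties—the $\mcg^\pm$ target (likely handled by passing to the orientation-preserving index-two subgroup), punctures and boundaries introduced on $S_a$ and $Q_j$, and the degenerate sub-case where $m_A$ is trivial (so $f$ is itself of prime order and no cutting is needed, Step 1 being vacuous)—all require careful bookkeeping, and the base-case torsion analysis will probably rely on explicit use of lantern and chain relations to rule out each possible order of $f$ in $\pmcg(S')$.
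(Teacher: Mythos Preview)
Your high-level architecture (induction on $g$, cutting along the canonical reduction system of $\varphi(d_a)$, and invoking Birman--Hilden on a piece) matches the paper's, but the proposal has a genuine gap precisely where you flag it: the ``pulling back'' step. Suppose your inductive hypothesis tells you that $\varphi'\colon \pmcg(S_a)\to \pmcg(Q_j)$ is multitwist preserving. This says that for every non-separating curve $c$ on $S_a$, the Birman--Hilden image of $\psi(d_c)$ in $\pmcg(Q_j)$ is a multitwist. That does \emph{not} imply $\psi(d_c)\in\pmcg(R_j)$ is a multitwist (a lift of a multitwist through a branched cover is in general only a root of a multitwist), nor does it say anything about the other projections $\pi_i\circ\cut_A\circ\varphi$ for $i\ne j$, nor about the multitwist correction in $\ker\cut_A$. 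So you cannot conclude that $\varphi(d_c)$ is a multitwist in $\pmcg(S')$, and the contradiction does not close.

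The paper circumvents this obstruction by a substantially more elaborate route. First, it isolates three preparatory reductions you omit: (i) reduce to $\varphi$ fixing no curve; (ii) prove separately that $\varphi(d_a)$ has infinite order (your torsion analysis is essentially this step, Proposition~\ref{propMatandoTorsion}, and it uses Lemmas~\ref{lemaGrpSimetrico} and~\ref{lema:orden_d} rather than just Theorems~\ref{thm:subgrupos_finitos} and~\ref{thm:subgrupos_finitos_abelianos}); (iii) bound $|\varphi_*(c)|\le (g'-1)/(g-1)$ via Lemmas~\ref{lema:si_fijas_curvas_con_orientacion_fijas_imagen_de_esa_curva_con_orient}--\ref{lema:debilmenteirred_no_comparten_comp} and Proposition~\ref{prop:contando_curvas}. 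The base case $g=4$ then uses (iii) to force $|\varphi_*(c)|=1$, so $\varphi(d_c^k)$ is a power of a single Dehn twist, and Birman--Hilden plus \cite[Proposition 7.1]{aramayona_homomorphisms_2013} gives a genus-reduction contradiction directly. For the inductive step the paper cuts along $\varphi_*(c_1)$ and applies induction to each projection $\varphi_j\colon\pmcg(S_{c_1})\to\pmcg(R'_j)$; if two projections fail to be multitwist preserving the genera add to more than $g'$. The hard case---exactly one projection $\varphi_1$ fails---is \emph{not} handled by another Birman--Hilden pass as you propose, but by Claims~\ref{claim:descomposicion} and~\ref{propiedad:los_elementos_se_trenzan}: one decomposes $\varphi(d_{c_i})=M_i h_i$ with $M_i$ a multitwist on $\varphi_*(c_i)$ and $h_i$ finite order fixing all of $\bigcup_j\varphi_*(c_j)$, proves the $M_i$ and $h_i$ separately satisfy braid relations, and then kills $h_1$ with the Alexander method. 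This is exactly the machinery that replaces your missing ``pull back''.
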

\addtocounter{theorem}{-1}
\endgroup

Next lemma reduces the proof of Theorem \ref{thm:clasificacion} to the case of homomorphisms fixing no curve. 

\begin{lemma}
	If Theorem \ref{thm:clasificacion} holds for homomorphisms fixing no curve, then Theorem \ref{thm:clasificacion} holds for arbitrary homomorphisms. 
\end{lemma}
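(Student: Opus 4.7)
The plan is to take a maximal multicurve on $S'$ whose components are individually fixed by $\varphi(\pmcg(S))$, cut along it, and apply the fixing-no-curve case of Theorem \ref{thm:clasificacion} to each resulting piece. Let $\varphi:\pmcg(S)\to \pmcg(S')$ be a nontrivial homomorphism fixing some curve on $S'$, and choose such a maximal multicurve $\eta$; it is nonempty by hypothesis. I first upgrade componentwise setwise fixing to componentwise fixing with orientation: the induced action on the orientations of the components of $\eta$ yields a homomorphism $\pmcg(S) \to (\mathbb{Z}/2\mathbb{Z})^{|\eta|}$, which must be trivial by Theorem \ref{thm:abelianization_pmcg} since $g\geq 4$ makes $\pmcg(S)$ perfect. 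Hence $\varphi(\pmcg(S)) \subset \centralizer_0(\eta)$ and the cutting map is defined on the image.

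Next, write the connected components of $S'\setminus \eta$ as $R'_1,\dots,R'_m$ and define $\varphi_i = \pi_i\circ \cut_\eta\circ \varphi : \pmcg(S) \to \pmcg(R'_i)$. By the maximality of $\eta$, every nontrivial $\varphi_i$ fixes no curve on $R'_i$: if some $\varphi_i$ fixed a curve $c\subset R'_i$, then $c$ would be an essential curve on $S'$ disjoint from $\eta$ and not isotopic into $\eta$, so $\eta\cup \{c\}$ would strictly enlarge the componentwise-fixed multicurve. Since each $R'_i$ is a subsurface of $S'$ with genus $g'_i \leq g' \leq \cota$ and $g\geq 4$, the fixing-no-curve case of Theorem \ref{thm:clasificacion} applies to every nontrivial $\varphi_i$ and yields that each such $\varphi_i$ is multitwist-preserving. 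At least one $\varphi_i$ must be nontrivial, for otherwise $\varphi$ would factor through the free abelian group $\ker \cut_\eta$ and thus be trivial by Theorem \ref{thm:abelianization_pmcg}.

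To conclude, fix a non-separating Dehn twist $d_a\in\pmcg(S)$. The tuple $\cut_\eta(\varphi(d_a))=(\varphi_i(d_a))_i$ consists of multitwists (possibly trivial) on the surfaces $R'_i$. Since $\ker \cut_\eta$ is generated by Dehn twists along $\eta$, we can write $\varphi(d_a) = \widetilde{m}\cdot m_\eta$, where $\widetilde{m}$ is a multitwist with components in $\bigsqcup_i R'_i$ lifting the $\varphi_i(d_a)$ and $m_\eta$ is a multitwist supported on $\eta$. Since $\widetilde{m}$ and $m_\eta$ have disjoint supports, they commute, so $\varphi(d_a)$ is itself a multitwist, and it is nontrivial by Remark \ref{rmk:trivial_homs}. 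The step I expect to be the main obstacle is the maximality analysis: one must carefully account for how boundary-parallel curves in the cut pieces $R'_i$ correspond to components of $\eta$ (and to the punctures created by cutting), to ensure that a fixed essential curve in any nontrivial $\varphi_i$ really can be adjoined to $\eta$ to contradict maximality, rather than being an artifact of the cut.
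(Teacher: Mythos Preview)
Your argument is correct and follows essentially the same route as the paper: choose a maximal componentwise-fixed multicurve, use perfectness to upgrade to orientation-preserving, cut, apply the no-curve-fixed case to the pieces, and recombine using that $\ker\cut_\eta$ consists of multitwists. Your closing worry about the maximality step is not an actual obstacle: any curve $c$ essential in $R'_i$ is automatically essential in $S'$ and not isotopic into $\eta$, since a bounding disk or once-punctured disk for $c$ in $S'$ would force some $\eta_j$ to be inessential or force $c$ to bound a puncture in $R'_i$.
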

\begin{proof}
	Let $\varphi:\pmcg(S)\to \pmcg(S')$ be a homomorphism fixing some curve and let $C=\{c_1,c_2,\dots , c_k\}$  be a maximal multicurve such that  $\varphi$ fixes each curve $c_i\in C$. Since $\pmcg(S)$ has trivial abelianization, the image of $\varphi$ fixes the orientation of each  $c_i\in C$. Therefore, we may consider the cutting homomorphism 
	\[\pmcg(S) \xrightarrow{\varphi} \mathcal{Z}_0(C) \xrightarrow{\cut_C} \prod_i \pmcg(R'_i),\]
	where each $R'_i$ is a connected component of  $S' \setminus \bigcup_{c_i \in C} c_i$. 
	
	We consider the maps 
	\[ \varphi_i=\pi_i \circ \cut_C \circ \varphi, \]
	 where $\pi_i$ is the projection to the $i$th component. The maximality of $C$ implies each projection $\varphi_i$ fixes no curve and, by hypotheses, each non-trivial $\varphi_i$ is multitwist preserving. Since the kernel of $\cut_C$ is generated by Dehn twists, it follows that $\varphi$ is also  multitwist preserving. 
\end{proof}

The rest of the section contains the proof of Theorem \ref{thm:clasificacion} for homomorphisms fixing no curve. The first step is to prove that the image of a Dehn twist has infinite order.

\subsection{No torsion}\label{secTorsion}
\new{ Let $\varphi:\pmcg(S)\to \pmcg(S')$ be a nontrivial homomorphism, where $S$ has genus $g\geq 3$. Although Theorem \ref{thm:bridson}  directly implies that the image of Dehn twist is a root of a multitwist, the multitwist might be trivial so the image is just a finite order element. We rule out this possibility by proving the next proposition. }

\begin{proposition}\label{propMatandoTorsion}
	Let $S, S'$ be two orientable surfaces of genus either $g\geq 4$ and $g' \leq 2^{g}-2$, or $g=3$ and $g'\leq 5$. Then, every nontrivial homomorphism $\varphi:\pmcg(S)\to \pmcg(S')$ sends non-separating Dehn twists to infinite order elements.
\end{proposition}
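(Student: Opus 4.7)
Suppose for contradiction that $h := \varphi(d_a)$ has finite order $n$ for some non-separating curve $a \subset S$. By Theorem \ref{thm:bridson}, $h$ is a root of a multitwist, so having finite order means $h$ is a root of the identity. Since all non-separating Dehn twists are $\pmcg(S)$-conjugate, every $\varphi(d_b)$ (with $b$ non-separating) has the same order $n$.

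The plan is to combine the Birman--Hilden sequence \eqref{eq:hom_birman_hilden} with the cutting construction to produce, from $\varphi$, a new homomorphism out of the pure mapping class group of a genus $(g-1)$ surface that still sends non-separating Dehn twists to finite-order elements, and then iterate until the Berrick--Gebhardt--Paris bound (Theorem \ref{thm:berrick_gebhardt_paris}) yields a contradiction. Concretely, pick a prime $p \mid n$ and set $h' := h^{n/p}$, a prime-order element which centralizes everything $h$ does. Realize $h'$ by an order-$p$ diffeomorphism $\sigma$ of $S'$; by Lemma \ref{lemma:birman_hilden_bounds}, the quotient surface $Q = S'/\langle\sigma\rangle$ (with singular points removed) has genus $\tilde{g} \leq (2g'+2-\tilde{p})/4$ and $\tilde{p} \leq 2g'+2$ punctures. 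Since $\varphi(\centralizer_0(a)) \subseteq \centralizer(h')$, combining the cutting sequence $1 \to \langle d_a\rangle \to \centralizer_0(a) \to \pmcg(S_a) \to 1$ (where $S_a$ is the genus $(g-1)$ surface with two boundary components obtained by cutting $S$ along $a$) with the Birman--Hilden sequence \eqref{eq:hom_birman_hilden} applied to $h'$, and checking that the image of $d_a$ descends through $\centralizer(h')/\langle h'\rangle$, yields a homomorphism
\[\psi: \pmcg(S_a) \longrightarrow \mcg^{\pm}(Q)\]
that continues to send non-separating Dehn twists to finite-order elements.

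I would next show $\psi$ is nontrivial: otherwise $\varphi(\pmcg(S_a)) \subseteq \langle h'\rangle$ would be finite cyclic, and the triviality of the abelianization of $\pmcg(S_a)$ (Theorem \ref{thm:abelianization_pmcg}, which applies since $g-1 \geq 3$) would force $\varphi(d_b) = 1$ for every non-separating $b$ disjoint from $a$; conjugation in $\pmcg(S)$ then propagates to $\varphi(d_a) = 1$, contradicting $h \neq 1$. Iterating the construction, each step decreases the domain's genus by one while roughly halving the codomain's genus, so the bound $g' \leq 2^{g}-2$ passes through the cascade. After $g-3$ iterations we reach a nontrivial homomorphism out of a genus-$3$ surface with codomain of genus at most $5$, giving the base case $(g,g')=(3,\leq 5)$, which is handled directly by producing an explicit action of $\pmcg(S)$ on a finite set of size smaller than $2^{g-1}(2^g-1)$ and contradicting Theorem \ref{thm:berrick_gebhardt_paris}.

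The principal obstacles are twofold. First, when $n$ is not prime one has $h \notin \langle h'\rangle$, so the descent used in the construction of $\psi$ is not automatic and likely requires iterating over the prime divisors of $n$ or passing to a finite-index subgroup on which the construction does descend cleanly. Second, the codomain $\mcg^{\pm}(Q)$ is the extended mapping class group, so applying inductive tools designed for $\pmcg$ demands restricting to a subgroup of finite index that maps into $\pmcg$, which in turn requires tracking the action on punctures of $Q$. Finally, verifying that the arithmetic $\tilde{g} \leq (2g'+2-\tilde{p})/4$ composes correctly through the iteration so as to stay inside the regime $g' \leq 2^g - 2$--in particular handling the off-by-one issues that appear when $\tilde{p} = 0$--is the main technical point.
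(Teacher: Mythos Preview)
Your approach is genuinely different from the paper's, and considerably more involved. The paper does not iterate Birman--Hilden at all; instead it argues directly by producing large finite subgroups of $\pmcg(S')$ and invoking the Hurwitz-type bounds (Theorems \ref{thm:subgrupos_finitos} and \ref{thm:subgrupos_finitos_abelianos}). Concretely, the paper splits on the order $d$ of $\varphi(d_a)$. If $d=2$, a chain of $2g+1$ curves gives order-two elements satisfying the Coxeter presentation of $\text{Sym}_{2g+2}$, so $\pmcg(S')$ contains a subgroup of order $(2g+2)!$, forcing $(2g+2)!\le 84(g'-1)$, which violates $g'\le 2^g-2$. If $d>2$, one shows directly that the images of $g$ pairwise disjoint non-separating Dehn twists (together with one extra) generate a finite abelian subgroup of order at least $2^{g+2}$ (or $27$ when $g=3$), and the Nakajima--Maclachlan bound $4g'+4$ does the rest. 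No induction, no quotient surfaces, no tracking of punctures.

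Your proposal, by contrast, has gaps you yourself flag, and at least two of them look fatal as written. First, the arithmetic does not close up: even in the cleanest case $\tilde p\ge 2$ one only gets $\tilde g\le g'/2$, so starting from $g'\le 2^g-2$ and descending $g-3$ times lands you at codomain genus roughly $8$, not the $5$ your base case needs; with $\tilde p=0$ (free involutions exist) the off-by-one is worse. Relatedly, the passage from $\mcg^\pm(Q)$ to $\pmcg(Q)$ via Theorem \ref{thm:berrick_gebhardt_paris} requires $\tilde p < 2^{g-2}(2^{g-1}-1)$, and already at $g=4$, $g'=14$ one has $\tilde p\le 30>28$, so you cannot conclude the puncture action is trivial. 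Second, your base case is not a proof: you assert one can produce ``an explicit action of $\pmcg(S)$ on a finite set'' contradicting Theorem \ref{thm:berrick_gebhardt_paris}, but there is no such finite set in sight---the image of $\varphi$ need not be finite even if every Dehn twist maps to a torsion element. The paper's direct large-finite-subgroup argument avoids all of this.
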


Proposition  \ref{propMatandoTorsion} is a consequence of the next two lemmas. 

\begin{lemma}\label{lemaGrpSimetrico}
	Let $S$ be an orientable surface of genus $g\geq 3$, let $G$ be a group and let $	\varphi:\pmcg(S)\to G$ be a homomorphism. If $\varphi(d_a)$ has order two for any  non-separating Dehn twist $d_a$, then $G$ contains a symmetric group of order $(2g+2)!$.
\end{lemma}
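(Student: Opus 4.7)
The plan is to realize the symmetric group $\Sigma_{2g+2}$ inside $G$ as the image of the Dehn twists along a chain of curves. Concretely, choose a chain of non-separating curves $c_1,\dots,c_{2g+1}$ on $S$ with $i(c_i,c_{i+1})=1$ and $i(c_i,c_j)=0$ for $|i-j|\geq 2$, arranged so that a regular neighborhood of the chain is a once-boundaried genus-$g$ subsurface. For consecutive curves, a regular neighborhood of $c_i\cup c_{i+1}$ is a once-punctured torus whose complement in $S$ is connected, hence $S\setminus(c_i\cup c_{i+1})$ is connected. Set $x_i:=\varphi(d_{c_i})\in G$ for $1\leq i\leq 2g+1$.

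By hypothesis each $x_i$ has order $2$, and since Dehn twists on consecutive curves satisfy the braid relation and Dehn twists on disjoint curves commute, the elements $x_i$ satisfy
\[
x_i^2=1,\qquad x_ix_{i+1}x_i=x_{i+1}x_ix_{i+1},\qquad x_ix_j=x_jx_i\;\text{ for }|i-j|\geq 2.
\]
These are precisely the Coxeter relations for $\Sigma_{2g+2}$ on the standard generators, so there is a surjection $\Sigma_{2g+2}\twoheadrightarrow\langle x_1,\dots,x_{2g+1}\rangle\leq G$. The main step will be to upgrade this to an isomorphism; for this I will rule out all proper nontrivial quotients of $\Sigma_{2g+2}$.

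For $g\geq 3$ we have $2g+2\geq 8\geq 5$, so the only normal subgroups of $\Sigma_{2g+2}$ are $1,\,A_{2g+2},\,\Sigma_{2g+2}$, giving only three possible quotients: $\Sigma_{2g+2}$, $\mathbb{Z}/2\mathbb{Z}$, and the trivial group. The trivial quotient is excluded because each $x_i$ has order $2$, hence is nonzero. For the $\mathbb{Z}/2\mathbb{Z}$ quotient, all $x_i$ would be equal to the unique nontrivial element; but since $\varphi$ is nontrivial, Lemma \ref{lemma:exercise} applied to $(c_i,c_{i+1})$ (which satisfy $i(c_i,c_{i+1})=1$ and have connected complement) yields $x_i\neq x_{i+1}$, a contradiction.

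Hence $\langle x_1,\dots,x_{2g+1}\rangle\cong\Sigma_{2g+2}\leq G$, completing the proof. The only real work is the topological verification that the chain can be chosen with the consecutive-complement-connected property (standard), and the group-theoretic observation about the normal subgroups of $\Sigma_n$ for $n\geq 5$; the rest is just combining the Coxeter presentation with Lemma \ref{lemma:exercise}.
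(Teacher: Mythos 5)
Your proposal is correct and follows essentially the same route as the paper: the same chain of $2g+1$ curves, the same Coxeter presentation of $\Sigma_{2g+2}$, and the same exclusion of the trivial and $\mathbb{Z}/2\mathbb{Z}$ quotients (the paper cites trivial abelianization of $\pmcg(S)$ directly where you invoke Lemma \ref{lemma:exercise}, which is proved by that very fact). No gaps.
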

\begin{proof}
	Choose $2g+1$  non-separating chained curves  $c_1, c_2, \dots, c_{2g+1}$, that is, curves satisfying 
	\[ 
	i(c_i, c_j)=
	\begin{cases}
		1&  |i-j| = 1, \\
		0& |i-j|>1.
	\end{cases}
	 \]
	 
	Now, consider the subgroup 
	\[
	H = \langle d_{c_1}, d_{c_2}\dots,d_{c_{2g+1}}\rangle.  
	\]
	\new{Observe that the images of any two non-separating Dehn twists have the same order, since they are all conjugate to each other. Thus,} $\varphi(H) <G$ is a subgroup generated by elements $\varphi(d_{c_i})$ of order two. Note that the elements  $\varphi(d_{c_i})$ and $\varphi(d_{c_j})$  commute if $|i-j|>1$ and satisfy the braid relation  if $|i-j|=1$.  \new{We claim  that $\varphi(H)$ is quotient of the symmetric group of order $(2g+2)!$. Indeed, the symmetric group admits the presentation}
		\begin{align*}
			\text{Symm}_{2g+2} = \langle \tau_1, \dots, \tau_{2g+1} |\; & \tau_i^2=1  \;\;\;\;\;\;\;\;\;\;\;\;\;\;\,\forall i\\
			& \tau_i  \tau_j = \tau_j \tau_j  \;\;\;\;\;\;\;\text{ if } |i-j|>1\\
			& \tau_i \tau_j \tau_i = \tau_j \tau_i \tau_j \;\; \text{ if } |i-j|=1 \,\rangle.
		\end{align*}
	\new{Mapping each $\tau_i$ to $\varphi(d_{c_i})$ we obtain the desired quotient}. As a consequence, $\varphi(H)$ is either  $\{1\},\; \integers/2\integers$ or the entire symmetric group. Since the abelianization of $\pmcg(S)$ is trivial, it follows that $\varphi(H)\ne 1, \integers/2\integers$. Thus, $\varphi(H)<G$ is the symmetric group of order $(2g+2)!$.
\end{proof}

The next result deals with the case where the image of a Dehn twist has finite order greater than two.

\begin{lemma}\label{lema:orden_d}
	Let $S$ be an orientable surface of genus $g\geq 3$, let $G$ be a group and let $	\varphi:\pmcg(S)\to G$ be a homomorphism. If $\varphi$ sends non-separating Dehn twists to elements of finite order $d>2$, then $G$ contains an abelian subgroup $V$ of order at least  $\min (2\cdot d\cdot n^{g-1},\;d^g)$, where $n\neq1$  is some divisor  of $d$. 
	
	In particular, if $g>3$ the abelian subgroup $V$ contains at least  $2^{g+2}$ elements. If $g=3$, the group $V$ has cardinality at least $27$.
\end{lemma}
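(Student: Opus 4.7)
My plan is to construct the required abelian subgroup $V$ using the images under $\varphi$ of commuting Dehn twists along a suitable system of non-separating curves on $S$. I fix a cut system $\eta = \{c_1, \ldots, c_g\}$ of pairwise disjoint non-separating simple closed curves on $S$ whose complement is connected; this exists since $g \geq 3$. Setting $\alpha_i := \varphi(d_{c_i})$, the elements $\alpha_1, \ldots, \alpha_g$ pairwise commute in $G$ and each has order $d$. By Lemma \ref{lemma:exercise}, applied to each pair $\{c_i, c_j\}$ (whose complement in $S$ is still connected, since removing fewer curves from a connected complement leaves it connected), we have $\alpha_i \neq \alpha_j$ for $i \neq j$. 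Since the change-of-coordinates principle realizes every transposition of a cut system by an element of $\pmcg(S)$, the order $n$ of $\alpha_i \alpha_j^{-1}$ is the same divisor of $d$ for every pair $i \neq j$, with $n > 1$.

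Next, I study the abelian group $V_0 := \langle \alpha_1, \ldots, \alpha_g\rangle \leq G$, viewed as a quotient of $(\mathbb{Z}/d)^g$ by a $\mathrm{Sym}_g$-invariant subgroup of relations $R$. The equality $(\alpha_i \alpha_j^{-1})^n = 1$ forces $R$ to contain $n(e_i - e_j)$ for all $i \neq j$, and a direct count gives $|V_0| \leq d \cdot n^{g-1}$. In the case $n = d$, the only remaining $\mathrm{Sym}_g$-invariant relations come from powers of the diagonal $\mathbf{1} = e_1 + \cdots + e_g$; these additional relations can be ruled out by applying Lemma \ref{lemma:exercise} to a carefully chosen auxiliary non-separating curve in the complement of $\eta$, yielding $|V_0| = d^g$. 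In the case $1 < n < d$, the subgroup $V_0$ alone does not suffice, so I enlarge it to an abelian subgroup $V \leq G$ by adjoining $\varphi(d_e)$ for a further non-separating simple closed curve $e$ lying in $S \setminus \bigcup_i c_i$, chosen so that $S \setminus (c_i \cup e)$ remains connected for every $i$; such curves exist because $g \geq 3$.

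The main technical obstacle is to verify that this extra generator genuinely multiplies the size of $V_0$ by at least two. Commutativity with every $\alpha_i$ is immediate since $e$ is disjoint from the $c_i$, but independence from $V_0$ requires applying Lemma \ref{lemma:exercise} to each pair $\{c_i, e\}$ and then analyzing the $\mathrm{Sym}_{g+1}$-invariant relations in $(\mathbb{Z}/d)^{g+1}$ to rule out any ``collapse'' beyond the unavoidable $n(e_i - e_j)$ relations. The resulting abelian subgroup then has order at least $d \cdot n^g \geq 2\,d\,n^{g-1}$, giving the stated bound. The numerical consequences $|V| \geq 2^{g+2}$ for $g > 3$ and $|V| \geq 27$ for $g = 3$ follow by minimizing $\min(2 d n^{g-1}, d^g)$ over the admissible pairs with $d \geq 3$ and $n \mid d$, $n \geq 2$; the tightest case is $d = 4, n = 2$ for $g > 3$ and $d = 3, n = 3$ for $g = 3$.
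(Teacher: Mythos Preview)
Your overall strategy matches the paper's: build an abelian subgroup from the images of Dehn twists along $g$ disjoint non-separating curves with connected complement, then in the case $n<d$ enlarge it by one more generator. The two definitions of $n$ also end up agreeing (the paper defines $n$ as the least power with $v_{a_1}^n\in\langle v_{a_2},\dots,v_{a_g}\rangle$ and then proves $v_a^n=v_b^n$ for all non-separating $a,b$, which makes it the order of $\alpha_i\alpha_j^{-1}$). However, there is a genuine gap in your treatment of the case $n<d$.

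To show that some $\varphi(d_e)$ lies outside $V_0$ you appeal to Lemma~\ref{lemma:exercise} and to $\mathrm{Sym}_{g+1}$-invariance of the relation module. But Lemma~\ref{lemma:exercise} only gives $\varphi(d_e)\neq\alpha_i$ for each $i$; it says nothing about whether $\varphi(d_e)$ could equal some other word $\alpha_1^{m_1}\cdots\alpha_g^{m_g}$. Symmetry does not close this: a $\mathrm{Sym}_{g+1}$-invariant subgroup of $(\mathbb{Z}/d)^{g+1}$ can perfectly well force the last generator into the span of the first $g$ without forcing any two generators to coincide. The paper supplies the missing idea via the \emph{lantern relation}. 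Assuming every non-separating curve $c$ in the complement of the $a_i$ has $v_c\in\tilde V$ (so all such $v$'s commute), the lantern $v_xv_yv_z=v_{a_1}v_{a_2}v_{a_3}v_b$ together with the already-established identity $v_a^n=v_b^n$ (valid for \emph{all} non-separating curves) yields $v_x^n=v_x^{2n}$, hence $v_x^n=1$, contradicting $n<d$. This use of a mapping-class-group relation beyond commutation and conjugation is precisely the step your argument lacks.

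There is also a smaller issue earlier: you write that the relations $n(e_i-e_j)\in R$ give $|V_0|\le d\,n^{g-1}$, but what is needed is the opposite inequality $|V_0|\ge d\,n^{g-1}$. That lower bound does not follow from your definition of $n$ alone; the paper obtains it from the \emph{minimality} of its $n$ (so that $\alpha_1^m\in\langle\alpha_2,\dots,\alpha_g\rangle$ forces $n\mid m$), a fact you have not established. Likewise, in the $n=d$ case you do not explain how Lemma~\ref{lemma:exercise}, which only distinguishes two images, rules out a diagonal relation $\alpha_1^k\cdots\alpha_g^k=1$.
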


\begin{proof}
	\new{
		Choose disjoint non-separating curves $a_1,\dots, a_g$ such that $S\setminus \bigcup_{i=1}^{g} a_i$ is connected. For convenience, we
		write $v_{c}=\varphi(d_{c})$ for any curve $c$.}
	
	\new{
		Let $n\geq 1$ be the smallest natural number such that there are   $m_i\in\mathbb{Z}$  that satisfy the equation }
		\[
		v_{a_1}^n = \prod_{i=2}^g    v_{a_i}^{m_i}.
		\]
		\new{
		Clearly, $n\leq d$ since $v_{a_1}$ has order $d$. In fact, by minimality of $n$ the number $d$ is a multiple of $n$. }
		
		\new{
		Note that $n$ does not depend on the choice of curves $a_1, \dots, a_{g}$. Indeed, if $b_1, \dots, b_g$ are disjoint non-separating curves with connected complement $S\setminus \bigcup_{i=1}^{g} b_i$, then there exists $f\in \pmcg(S)$  such that $f(a_i)=b_i$ for every $i$ and  conjugating the equation above we obtain}
		\begin{align*}
			v_{b_1}^n &= \varphi(f)\cdot v_{a_1}^n\cdot \varphi(f)^{-1} \\ 
			&= \varphi(f)\cdot v_{a_2}^{m_2}\dots v_{a_g}^{m_g}\cdot \varphi(f)^{-1} \\
			&= v_{b_2}^{m_2}\dots v_{b_g}^{m_g}. \\ 
		\end{align*}
		\new{Repeating the argument with $f^{-1}(b_i)=a_i$ yields that the minimal $n$ is independent of the choice of curves, as claimed.}
		
		\new{Next, we claim $v_a^n=v_b^n$ for any two non-separating curves $a$ and $b$. To prove so, consider a  non-separating curve $a_{g+1}$ disjoint with $a_1, \dots, a_g$ such that $S\setminus \bigcup_{i=2}^{g+1} a_i$  is connected and $S\setminus (a_1\cup a_{g+1} \cup a_i)$ is connected for every $i=1, \dots, g$. There exists a map $f\in \pmcg(S)$ such that $f(a_1)=a_{g+1}$  and $f(a_i)=a_i$ for every $i=2, \dots, g$. In particular, the argument in the previous paragraph yields $v_{a_1}^n = v_{a_{g+1}}^n$.  Then, for any fixed  $j=2, \dots, g$ take a map $h\in\pmcg(S)$ such that $h(a_1)=a_j$ and $h(a_{g+1})=a_{g+1}$, so conjugating} 
		\[v_{a_j}^n= \varphi(h)\cdot v_{a_1}^n\cdot \varphi(h^{-1}) = \varphi(h)\cdot v_{a_{g+1}}^n\cdot \varphi(h^{-1}) = v_{a_{g+1}}^n.\]
		\new{It follows that $v_{a_i}^n=v_{a_j}^n$ for any two $i, j$. By extending the argument along a Humphries generating set we may show that $v_a^n=v_b^n$ holds for any two non-separating curves, as desired.}
		
		\new{If $n=1$, then $v_a=v_b$ for any two non-separating Dehn twists. This implies that the image of $\varphi$ is abelian, which contradicts that $\pmcg(S)$ has trivial abelianization for genus $g\geq 3$. We conclude that $n\geq 2$. } 
	
	Consider the finite abelian group $\tilde{V}=\langle v_{a_1},\dots,v_{a_g}\rangle$. Using the minimality of $n$, it is an exercise to see that all the elements of the form 
	\[ 
	v_{a_1}^{m}\cdot v_{a_2}^{n_2} \dots v_{a_g}^{n_g},  
	\]
	with $m\in \integers/d\integers$ and $n_i \in \integers /n\integers$, are  distinct. That is,  $\tilde{V}$ is a group of order at least $d\cdot n^{g-1}$. If $n=d$, then $\tilde{V} $ has  $d^g$ elements and we are done. It is left to consider the case  $d>n$. 
	
	We claim that if $d>n$, then there exists a non-separating  curve  $c$ of $S$ contained  in $S\setminus \bigcup_{i=1}^g a_i$ such that  $v_c \not \in \tilde{V}$. To prove it, we assume there is no such $c$.
	
	Assume that every non-separating curve  $c$ in $S\setminus \bigcup_{i=1}^g a_i$ satisfies  $v_c\in \tilde{V}$.  As $\tilde{V}$ is abelian, all the images $v_c$ pairwise commute. Now, we can take non-separating curves  $\{b, x, y, z\}$ of $S$ contained in $S\setminus \bigcup_{i=1}^g a_i$ such that $\{c_1, c_2, c_3, b, x, y, z\}$  form a lantern (see \cite[Chapter 5]{farb_primer_2012}). Considering the lantern relation in the image we have that
	\[
	v_{x}v_{y} v_{z} =  v_{a_1}v_{a_2}v_{a_3}v_{b}.
	\]
	The elements in the equation above pairwise commute as they are contained in $\tilde{V}$. So, by taking $n$-th powers, we obtain  
	\[
	v_{x}^n  =  v_{a_1}^nv_{a_2}^nv_{a_3}^n v_{b}^n v_{y}^{-n} v_{z}^{-n}. 
	\]
	
	Recall that $v_x^n = v_a^n$ for any non-separating curve $a$. Thus, the  equation above implies  $v_x^n=v_x^{2n}$ and therefore $v_x^n=\text{id}$.  However, this contradicts that   $n<d$ (recall $d$ is the order of $v_x$). We conclude that our assumption was false, i.e, there exists a non-separating curve  $c$ in  $S\setminus \bigcup_{i=1}^g a_i$  such that $v_c\not \in \tilde{V}$. Fix  any curve $c$ with this property. 
	
	To finish the case $d>n$, notice that the abelian group $\langle \tilde{V}, v_c\rangle $  has elements of the form 
	\[ 
	v_c^{\epsilon}\cdot v_{a_1}^{m}\cdot v_{a_2}^{n_2} \dots v_{a_g}^{n_g},  
	\]
	where $\epsilon \in \{0,1\}$, $m\in \integers/d\integers$ and $n_i \in \integers /n\integers$. Those elements are all distinct. Therefore, $\langle \tilde{V}, v_c\rangle$ has order at least $2\cdot d\cdot n^{g-1}$.
\end{proof}

We may now prove Proposition \ref{propMatandoTorsion}.

\begin{proof}[Proof of Proposition  \ref{propMatandoTorsion}]
	Seeking a contradiction, assume there is a non-separating Dehn twist $d_c$ whose image has order $d<\infty$.
	
	If $d=2$,  by   Lemma \ref{lemaGrpSimetrico} the group $\pmcg(S')$ has a finite subgroup of order  $(2g+2)!$. Recall that a finite subgroup of  $\pmcg(S')$ has order at most $84(g'-1)$ (see Theorem \ref{thm:subgrupos_finitos}). We deduce that  $(2g+2)! \leq 84(g'-1)$ and so 
	\[
	g'\geq \frac{(2g+2)!}{84}+1 \geq 2^g-1.    
	\]
	However, this contradicts the hypothesis $g' \leq 2^g-2$. Thus, $\varphi(d_c)$ has order  $d>2$. 
	
	If $d>2$ and $g>3$, then $\varphi(\pmcg(S))$ has an abelian subgroup of order $2^{g+2}$ (see Lemma \ref{lema:orden_d}). Recall that a finite abelian subgroup of $\pmcg(S')$ has order  at most $4g'+4$ (see Theorem  \ref{thm:subgrupos_finitos_abelianos}). As a consequence, $2^{g+2}\leq 4g' + 4$ and 
	\[  2^{g} -1 \leq g'. \] The last inequality contradicts the hypothesis  $g'\leq 2^g-2$. Thus, if  $g>3$  the element $\varphi(d_c)$ has infinite order.
	
	Lastly, if $g=3$ and $d>2$ the image of $\varphi$ has an abelian subgroup of finite order at least $27$ (see Lemma \ref{lema:orden_d}). Again, using the bound on finite abelian subgroups, we have $27 \leq 4g'+4$ which contradicts the hypothesis $g'\leq 5$. Thus, if $g=3$ then the image $\varphi(d_c)$ has infinite order. 
\end{proof}

\subsection{Counting curves}\label{secContando}

Let $S, S'$ be two surfaces as in Proposition \ref{propMatandoTorsion} and let $\varphi:\pmcg(S) \to \pmcg(S')$ be a nontrivial homomorphism. Theorem \ref{thm:bridson} and Proposition \ref{propMatandoTorsion} together guarantee the existence of an $n\in \mathbb{N}$  such that $\varphi(d_c^n)$ is a multitwist for  every non-separating curve $c$.  We write $\varphi_{*}(c)$ to denote the components of the multitwist $\varphi(d_c^n)$. In this section we compute an upper bound on the cardinality of $\varphi_{*}(c)$. 
 
 It is immediate that $\varphi_{*}$ satisfies the following properties:
\begin{itemize}
	\item If $i(c,c')=0$, then  $i(\varphi_*(c), \varphi_*(c'))=0$. 
	\item The cardinality  $|\varphi_*(c)|$ is independent of the non-separating curve $c.$
	\item $\varphi(f) \cdot \varphi_*(c) = \varphi_*(f(c))$ for every $f\in \pmcg(S)$.
\end{itemize}
The next result guarantees that forgetting punctures does not change the cardinality of $\varphi_{*}(c)$. The result was originally stated in  \cite[Lemma 6.4]{aramayona_homomorphisms_2013} with different hypotheses, however their proof remains valid for the version we state here. 

\begin{lemma}\label{lema:olvidar_no_reduce_curvas}
	Let $S, S'$ be two orientable surfaces of genus either $g\geq 4$ and $g' \leq 2^g-2 $, or  $g=3$ and $g'\leq 5$. If $\varphi:\pmcg(S)\to \pmcg(S')$ is a homomorphism fixing no curve, then the composition with the forgetful map satisfies $|(\forget \circ \varphi)_*(c)|=| \varphi_*(c)|$.
\end{lemma}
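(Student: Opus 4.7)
The plan is to follow the strategy of Aramayona--Souto \cite[Lemma 6.4]{aramayona_homomorphisms_2013}. By iterating puncture by puncture, I would reduce to the case where $\forget$ fills a single puncture $p$ of $S'$. Assume for contradiction that $|(\forget \circ \varphi)_*(c)| < |\varphi_*(c)|$ for some non-separating curve $c$ on $S$, and write $\varphi_*(c) = \{a_1,\dots,a_k\}$. A strict drop in the count after filling $p$ must come from one of three scenarios: some component $a_i \in \varphi_*(c)$ bounds (I) a once-punctured disk in $S'$ containing $p$; (II) a twice-punctured disk in $S'$ containing $p$ and another puncture $q$; or (III) a once-punctured annulus containing $p$ together with a distinct component $a_j$.

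In cases (I) and (II), the isotopy class of $a_i$ is uniquely determined by the puncture or pair of punctures it encloses. Since every element of $\varphi(\pmcg(S)) \subset \pmcg(S')$ fixes punctures pointwise, the class $a_i$ is then fixed by $\varphi(\pmcg(S))$, contradicting the hypothesis that $\varphi$ fixes no curve.

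For case (III), I would first show that at most two components of $\varphi_*(c)$ can coalesce after filling $p$: $m \geq 3$ distinct components that become mutually isotopic in $\overline{S'}$ would form a chain of parallel circles in $\overline{S'}$ cobounding $m-1$ annuli; only one of these annuli may contain $p$ in $S'$, while the others are genuine annuli in $S'$, forcing two consecutive components to be isotopic in $S'$ and contradicting distinctness. Hence the canonical pair $\{a,b\} \subset \varphi_*(c)$ cobounding a once-punctured annulus with $p$ inside is unique. Next, for two disjoint non-separating curves $c, c'$, the multitwists $\varphi(d_c^n), \varphi(d_{c'}^n)$ commute and so their components are disjoint; the associated once-punctured annuli around $p$ must therefore be nested, forcing the canonical pair $P(c)$ to be parallel in $\overline{S'}$ to the canonical pair $P(c')$. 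Propagating this along the non-separating curve graph through disjointness, the common isotopy class $\bar a$ of $\{a,b\}$ in $\overline{S'}$ is independent of $c$. In particular, $\bar a$ is fixed by $(\forget \circ \varphi)(\pmcg(S))$, and its preimage in $S'$ is exactly the two-element set $\{a,b\}$ (a third lift would produce an extra genuine annulus in $S'$ as in the count bound above). The action of $\varphi(\pmcg(S))$ on $\{a,b\}$ then yields a homomorphism $\pmcg(S)\to\mathbb{Z}/2\mathbb{Z}$, which is forced to be trivial by the triviality of the abelianization of $\pmcg(S)$ (Theorem \ref{thm:abelianization_pmcg}); so both $a$ and $b$ are individually fixed by $\varphi(\pmcg(S))$, again contradicting the hypothesis.

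The genus bounds in the statement enter only through Proposition \ref{propMatandoTorsion}, which ensures that $\varphi(d_c^n)$ is a nontrivial multitwist and hence $\varphi_*(c)$ is well-defined and nonempty. I expect the main obstacle to be case (III), specifically the combinatorial-topological step establishing uniqueness of the canonical pair together with the $c$-independence of the common class $\bar a$ in $\overline{S'}$.
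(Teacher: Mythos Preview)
Your overall strategy matches the paper's, which simply invokes the proof of \cite[Lemma~6.4]{aramayona_homomorphisms_2013}. Two intermediate claims in your sketch are incorrect, though both are repairable within the same circle of ideas.

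In case (II), the isotopy class of $a_i$ is \emph{not} uniquely determined by the pair $\{p,q\}$ it encloses: already on a four-punctured sphere there are infinitely many curves separating $\{p,q\}$ from the remaining punctures. The fix is to argue as in your case (III). Among components of $\varphi_*(c)$ there is at most one bounding a twice-punctured disk containing $p$ (two such disjoint curves would have nested disks, and the inner one, lying in a pair of pants, would be forced peripheral or equal to the outer one). Call it $Q(c)$; it exists for every non-separating $c$ by conjugacy. For disjoint $c,c'$, the same nesting forces $Q(c)=Q(c')$ as isotopy classes in $S'$, and connectedness of the non-separating curve graph together with $\varphi(f)\cdot Q(c)=Q(f(c))$ yields a curve fixed by $\varphi(\pmcg(S))$.

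In case (III), the claim that the preimage of $\bar a$ in $S'$ is exactly $\{a,b\}$ is false: point-pushing around $p$ produces infinitely many isotopy classes in $S'$ mapping to $\bar a$. But this step is unnecessary. Your nesting argument actually delivers the stronger conclusion $P(c)=P(c')$ as pairs of isotopy classes in $S'$, not merely parallel in $\overline{S'}$: once the two once-punctured annuli are nested, the inner boundary curves lie in a pair of pants and are forced to coincide with the outer pair. With $P(c)$ independent of $c$ in $S'$ and $\varphi(f)\cdot P(c)=P(f(c))$, the pair $\{a,b\}$ is $\varphi(\pmcg(S))$-invariant directly, and your abelianization step finishes.
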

\begin{proof}
	The statement follows from the proof of \cite[Lemma 6.4]{aramayona_homomorphisms_2013}.
\end{proof}

Before computing the upper bound, we need the following two lemmas. 

\begin{lemma}\label{lema:si_fijas_curvas_con_orientacion_fijas_imagen_de_esa_curva_con_orient}
	Let $S, S'$ be two orientable surfaces of genus either $g\geq 5$ and $g' \leq 2^g-2 $, or $g=4$ and $g'\leq 10$.  If $\varphi:\pmcg(S)\to \pmcg(S')$ is a homomorphism fixing no curve and $c$ is a non-separating curve on $S$, then $\varphi(\mathcal{Z}_0(c))$ fixes every curve in $\varphi_*(c)$ with orientation.
\end{lemma}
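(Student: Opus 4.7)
The strategy has three stages: reduce to $S'$ closed, show that the permutation action of $\varphi(\mathcal{Z}_0(c))$ on $\varphi_*(c)$ is trivial, and upgrade to orientations via abelianization. For the reduction, note that every element of $\pmcg(S')$ fixes any boundary-parallel curve on $S'$, so the hypothesis that $\varphi$ fixes no curve forces $S'$ to have empty boundary; then Lemma~\ref{lema:olvidar_no_reduce_curvas} (applied iteratively, or in a direct generalization) lets us forget every puncture of $S'$ while preserving the cardinality $|\varphi_*(c)|$. Consequently we may assume $S'$ is closed and obtain the bound $|\varphi_*(c)| \leq 3g' - 3$.

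The second stage uses the Berrick--Gebhardt--Paris bound. Since $d_c$ is central in $\mathcal{Z}_0(c)$, the multitwist $\varphi(d_c^n) = m_{\varphi_*(c)}$ is central in $\varphi(\mathcal{Z}_0(c))$, so the image preserves $\varphi_*(c)$ setwise, yielding a homomorphism $\Psi\colon \mathcal{Z}_0(c) \to \mathrm{Sym}(\varphi_*(c))$. Letting $\Sigma$ be the surface obtained by cutting $S$ along $c$ (of genus $g-1 \geq 3$), the inclusion homomorphism $\pmcg(\Sigma) \to \pmcg(S)$ surjects onto $\mathcal{Z}_0(c)$, so precomposing yields an action of $\pmcg(\Sigma)$ on the finite set $\varphi_*(c)$. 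By Theorem~\ref{thm:berrick_gebhardt_paris}, any non-trivial such action would force $|\varphi_*(c)| \geq 2^{g-2}(2^{g-1}-1)$. The hypotheses on $g$ and $g'$ are chosen so that $3g'-3 < 2^{g-2}(2^{g-1}-1)$: for $g = 4,\, g' \leq 10$ this is the tight inequality $27 < 28$; for $g \geq 5,\, g' \leq 2^g - 2$ it reduces to a routine estimate. Hence $\Psi$ is trivial and each curve in $\varphi_*(c)$ is fixed setwise.

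Finally, with every curve in $\varphi_*(c)$ fixed, the residual orientation data defines a map $\Theta\colon \mathcal{Z}_0(c) \to (\mathbb{Z}/2)^{|\varphi_*(c)|}$ recording orientation flips. Since the target is abelian, $\Theta$ factors through the abelianization of $\mathcal{Z}_0(c)$, which is trivial because $\mathcal{Z}_0(c)$ is a quotient of $\pmcg(\Sigma)$ and the latter has trivial abelianization by Theorem~\ref{thm:abelianization_pmcg} (genus $g-1\geq 3$). Therefore $\Theta$ is trivial, completing the proof. I expect the main obstacle to be the sharp numerical comparison in the second stage---the hypotheses on $(g, g')$ are essentially calibrated precisely to make Berrick--Gebhardt--Paris applicable---together with a careful justification of the iterated reduction to closed $S'$ (since applying Lemma~\ref{lema:olvidar_no_reduce_curvas} repeatedly requires the intermediate homomorphisms to still fix no curve, or a version of the lemma that forgets all punctures at once).
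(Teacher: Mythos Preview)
Your proposal is correct and follows the same route as the paper: reduce to $\partial S'=\emptyset$ and then to $S'$ closed via Lemma~\ref{lema:olvidar_no_reduce_curvas} to get $|\varphi_*(c)|\leq 3g'-3$, apply Theorem~\ref{thm:berrick_gebhardt_paris} to the action of $\pmcg(S_c)$ (genus $g-1$) on $\varphi_*(c)$, and conclude orientation-preservation from the trivial abelianization of $\pmcg(S_c)$. The concern you flag about iterating Lemma~\ref{lema:olvidar_no_reduce_curvas} is a point the paper also leaves implicit; note, however, that you only need the cardinality equality $|\varphi_*(c)|=|(\forget\circ\varphi)_*(c)|$ for the bound, not the full strength of ``fixes no curve'' for the composed map, and the underlying argument in \cite[Lemma~6.4]{aramayona_homomorphisms_2013} handles all punctures at once.
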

\begin{proof}
	 Since $\varphi$ fixes no curve, $S'$ has no boundary.
	 
	We compute a rough bound for  $|\varphi_{*}(c)|$.  By Lemma \ref{lema:olvidar_no_reduce_curvas}, we can assume $S'$ is closed. \new{Provided that $g\geq 5$, then }
	\[ |\varphi_{*}(c)| \leq 3g'-3 \leq 3\cdot 2^g-9. \]
	\new{For the $g=4$ case, the first inequality will suffice.} Now, let $S_c$ be the complement of an open regular neighborhood of $c$. The inclusion $S_c \xhookrightarrow{} S$ induces an epimorphism  $\pmcg(S_c) \xrightarrow{\iota} \mathcal{Z}_0(c)$. The image of $\varphi\circ \iota$  fixes the multicurve   $\varphi_{*}(c)$ setwise and so induces an action of $\pmcg(S_c)$ on $\varphi_{*}(c)$. Since $S_c$ has genus $g-1$,  Theorem \ref{thm:berrick_gebhardt_paris} implies the action is trivial. In other words, $\varphi(\mathcal{Z}_0(c))$  fixes every curve in  $\varphi_{*}(c)$.
	
	Since $\pmcg(S_c)$ has trivial abelianization, it follows that $\varphi(\mathcal{Z}_0(c))$   fixes the orientation of each curve in $\varphi_{*}(c)$.
\end{proof}

\begin{lemma}\label{lema:debilmenteirred_no_comparten_comp}
	Let $S, S'$ be two orientable surfaces of genus either $g\geq 5$ and $g' \leq 2^g-2 $, or  $g=4$ and $g'\leq 10$.  If $\varphi:\pmcg(S)\to \pmcg(S')$ is a homomorphism  fixing no curve and $c,c'$ are two disjoint non-separating curves such that $S\setminus (c\cup c')$ is connected, then $\varphi_{*}(c)$ and $\varphi_{*}(c')$ share no curves.
\end{lemma}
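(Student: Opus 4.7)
The plan is to argue by contradiction: suppose that some curve $a$ lies in the intersection $\varphi_*(c)\cap\varphi_*(c')$. Applying Lemma \ref{lema:si_fijas_curvas_con_orientacion_fijas_imagen_de_esa_curva_con_orient} to both $c$ and $c'$, the subgroups $\varphi(\centralizer_0(c))$ and $\varphi(\centralizer_0(c'))$ each fix $a$ with orientation, and hence so does $\varphi(\langle\centralizer_0(c),\centralizer_0(c')\rangle)$. I would then reduce the lemma to proving the purely algebraic claim that $\langle\centralizer_0(c),\centralizer_0(c')\rangle=\pmcg(S)$; granted this, $\varphi(\pmcg(S))$ fixes $a$, contradicting the standing assumption that $\varphi$ fixes no curve.

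To prove the generation claim I would invoke the change of coordinates principle to put $(c,c')$ into a standard position. Specifically, since any two pairs of disjoint non-separating curves with connected complement lie in the same $\pmcg(S)$-orbit, one can choose $f\in\pmcg(S)$ with $f(c_1)=c$ and $f(c_4)=c'$, where $c_1,\ldots,c_{2g+1}$ is a Humphries chain satisfying $i(c_i,c_{i+1})=1$ and $i(c_i,c_j)=0$ for $|i-j|\geq 2$. Conjugating by $f$ produces another Humphries generating set $\{d_{f(c_i)}\}$ of $\pmcg(S)$. A direct inspection of the chain shows that only $f(c_2)$ meets $c$ and only $f(c_3), f(c_5)$ meet $c'$, so each $d_{f(c_i)}$ lies in $\centralizer_0(c)\cup\centralizer_0(c')$. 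This exhibits a full Humphries generating set inside $\langle\centralizer_0(c),\centralizer_0(c')\rangle$, completing the proof of the claim.

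The main subtlety lies in accommodating any extra Humphries curves required when $S$ has boundary or punctures, since those are not part of the chain. However, given the genus bound $g\geq 4$, these extra curves can be chosen disjoint from $c\cup c'$ via the same change of coordinates, and so they automatically lie in both centralizers. Apart from this bookkeeping, the argument is essentially immediate once Lemma \ref{lema:si_fijas_curvas_con_orientacion_fijas_imagen_de_esa_curva_con_orient} is in hand.
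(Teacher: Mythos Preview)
Your overall strategy is correct and is cleaner than the paper's. The paper does not argue directly that $\langle\centralizer_0(c),\centralizer_0(c')\rangle=\pmcg(S)$; instead it writes $F=\varphi_*(c)\cap\varphi_*(c')$ and, using change of coordinates together with Lemma~\ref{lema:si_fijas_curvas_con_orientacion_fijas_imagen_de_esa_curva_con_orient}, propagates the inclusion $F\subset\varphi_*(d)$ first to every non-separating $d$ disjoint from $c$ with connected complement, then to every $d$ with $i(c,d)=1$. Only after reaching a full Humphries set does the paper invoke Lemma~\ref{lema:si_fijas_curvas_con_orientacion_fijas_imagen_de_esa_curva_con_orient} once more to conclude that each generator fixes $F$. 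Your shortcut---observing that the two centralizers already generate everything---bypasses this propagation entirely and reaches the contradiction in one step. The paper's argument has the mild advantage of establishing the stronger intermediate fact $F\subset\varphi_*(d)$ for many $d$, but that is never used elsewhere.

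There is, however, a genuine error in your execution of the generation claim. A chain $c_1,\dots,c_{2g+1}$ with the intersection pattern you describe does \emph{not} yield a Humphries generating set of $\pmcg(S)$ when $g\geq 3$, even for closed $S$: all $2g+1$ chain twists commute with the hyperelliptic involution determined by the chain, so they generate at most its centralizer, a proper subgroup. The extra Humphries curve is needed already in the closed case, not only when $S$ has boundary or punctures as you suggest. The fix is easy: use the actual Humphries configuration (as in the paper's Figure~\ref{fig:humphrey_gen}), where one extra curve branches off the chain, and choose your pair $(c,c')$ so that this extra curve is disjoint from one of them. Indeed the paper itself asserts exactly the fact you need in the proof of Lemma~\ref{lema:preservar_mults_no_comparten_curvas_1}: for any two disjoint curves $a,b$ in a Humphries set one can pass to another Humphries set containing $a,b$ in which every curve misses $a$ or $b$. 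With this correction your argument goes through.
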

\begin{proof}
	Since $\varphi$ fixes no curve,  $S'$ has no boundary. Also, by Lemma \ref{lema:olvidar_no_reduce_curvas}, we can assume $S'$ has no punctures.  Write $F = \varphi_{*}(c) \cap \varphi_{*}(c')$.
	
	 First, we claim that if $d$ is a non-separating curve on $S$ with $i(d, c)=0$ and such that $S\setminus (d\cup c)$ is connected, then $F\subset \varphi_{*}(d)$. Indeed, consider $f\in \pmcg(S)$ such that $f\in \centralizer_0(c)$ and $f(c')=d$. We know that 
	 \[ \varphi(f)\cdot \varphi_{*}(c')=\varphi_{*}(f(c'))=\varphi_{*}(d). \] 
	Now, Lemma \ref{lema:si_fijas_curvas_con_orientacion_fijas_imagen_de_esa_curva_con_orient} implies  $\varphi(f)$ fixes each curve in $F\subset \varphi_{*}(c)$. In particular, $F\subset \varphi_{*}(d)$. 
	
	We claim $F\subset \varphi_{*}(e)$, for every $e$  non-separating curve on $S$ with $i(c, e)=1$. As $g\geq 4$, there exists a non-separating curve $d$  on $S$ with $i(d, c)=i(d, e)=0$ and such that both $S\setminus (d\cup c)$ and $S\setminus (d\cup e)$ are  connected. Then,  considering $f\in  \centralizer_0(d)$ such that $f(c)=e$, we may repeat the argument above to deduce that $F\subset \varphi_{*}(e)$. 
	
	From the last two paragraphs it follows there is a Humphries generating set $\mathcal{H}$ of $\pmcg(S)$ with the property that $F\subset \varphi_{*}(c)$ for every  element $d_c \in \mathcal{H}$. Observe that for  every $d_c\in \mathcal{H}$ there exists $d_{b}\in \mathcal{H}$ with  $d_c \in \centralizer_0(b)$. Combining these facts with Lemma \ref{lema:si_fijas_curvas_con_orientacion_fijas_imagen_de_esa_curva_con_orient}, we deduce that every image $\varphi(d_c)$ fixes each curve  $x\in F$. Since $\mathcal{H}$ generates, we conclude $\varphi(\pmcg(S))$ fixes each curve  $x\in F$. By hypothesis, $\varphi$ fixes no curve, thus $F=\emptyset$. 
\end{proof}

The following proposition gives the upper bound for $|\varphi_*(c)|$.

\begin{proposition}\label{prop:contando_curvas}
	Let $S, S'$ be two orientable finite-type surfaces of genus either $g\geq 5$ and $g' \leq 2^g-2 $, or  $g=4$, and $g'\leq 10$. If $\varphi:\pmcg(S)\to \pmcg(S')$  is a homomorphism fixing no curve, then $|\varphi_*(c)|\leq \frac{g'-1}{g-1}$.
\end{proposition}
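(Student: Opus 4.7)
The strategy is an Euler characteristic argument based on a cut system. Let $k=|\varphi_*(c)|$. By Lemma~\ref{lema:olvidar_no_reduce_curvas} we may assume $S'$ has no punctures, and since $\varphi$ fixes no curve $S'$ has no boundary either, so $S'$ is closed. Choose a cut system $c_1,\dots,c_g$ on $S$, that is, $g$ pairwise disjoint non-separating curves whose union has connected complement (a sphere with $2g$ boundary components). Each pair $(c_i,c_j)$ with $i\ne j$ satisfies the hypotheses of Lemma~\ref{lema:debilmenteirred_no_comparten_comp}, so $\varphi_*(c_i)\cap\varphi_*(c_j)=\emptyset$, and consequently $M:=\bigcup_{i=1}^{g}\varphi_*(c_i)$ is a multicurve on $S'$ with exactly $gk$ components.

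Write $R'_1,\dots,R'_m$ for the connected components of $S'\setminus M$, with genera $h_j$ and boundary component counts $b_j$. Since $\sum_j b_j=2gk$ and $\sum_j\chi(R'_j)=\chi(S')=2-2g'$, a direct computation gives
\[
\sum_{j=1}^m h_j \;=\; g'-1+m-gk,
\]
and in particular $gk\leq g'-1+m$. The proposition will follow once we establish the bound $m\leq k$: combining these two inequalities yields $gk\leq g'-1+k$, i.e.\ $(g-1)k\leq g'-1$, which is exactly the desired estimate.

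To prove $m\leq k$, we use Lemma~\ref{lema:si_fijas_curvas_con_orientacion_fijas_imagen_de_esa_curva_con_orient}: the image $\varphi(\centralizer_0(\bigcup_i c_i))$ fixes every curve of $M$ with orientation. Composing $\varphi$ with the cutting map along $M$ therefore defines a homomorphism $\centralizer_0(\bigcup_i c_i)\to\prod_{j=1}^{m}\pmcg(R'_j)$, which factors naturally through the mapping class group of the sphere with $2g$ holes obtained from $S$ by cutting along $\bigcup_i c_i$. By projecting to each factor $\pmcg(R'_j)$ and analyzing which projections are trivial and which are not, combined with Theorem~\ref{thm:berrick_gebhardt_paris} ruling out small non-trivial finite actions of $\pmcg(S)$ and the global hypothesis that $\varphi$ fixes no curve of $S'$, one forces the components $R'_j$ to be organized into at most $k$ ``blocks''. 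The genus bounds of the proposition (namely $g'\leq 2^g-2$ for $g\geq 5$, or $g'\leq 10$ when $g=4$) enter precisely at this step to keep the relevant counts small enough for the rigidity of $\pmcg(S)$ to apply.

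The main obstacle is the proof of $m\leq k$: it requires a careful interplay between the local topology of each complementary piece $R'_j$ and the global rigidity properties of $\pmcg(S)$. Once this count is in hand, the rest of the argument is the Euler characteristic bookkeeping above.
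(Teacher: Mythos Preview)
Your Euler-characteristic setup is correct up to and including the inequality $gk\leq g'-1+m$, but the crucial bound $m\leq k$ is left as a sketch that does not work as written. Two concrete problems: first, the map $\centralizer_0(\bigcup_i c_i)\to\prod_j\pmcg(R'_j)$ need \emph{not} factor through $\pmcg$ of the $2g$-holed sphere, because the kernel of the cutting map on $S$ is generated by the $d_{c_i}$, and after composing with $\cut_M\circ\varphi$ these become $\cut_M(\varphi(d_{c_i}))$, which is only known to have \emph{finite order} (its $n$-th power is a multitwist supported on $M$), not to be trivial. Second, even if the factoring held, Theorem~\ref{thm:berrick_gebhardt_paris} requires genus $\geq 3$ and gives nothing for a genus-$0$ group, so the ``rigidity'' you invoke is unavailable. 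Without a replacement argument, $m\leq k$ is unproven, and the whole estimate collapses.

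The paper avoids this obstacle by a much shorter route. Instead of a cut system of $g$ curves, choose $3g-3$ pairwise disjoint non-separating curves $a_1,\dots,a_{3g-3}$ with the extra property that $S\setminus(a_i\cup a_j)$ is connected for every pair $i\neq j$ (such a pants decomposition exists). Lemma~\ref{lema:debilmenteirred_no_comparten_comp} then gives $\varphi_*(a_i)\cap\varphi_*(a_j)=\emptyset$ for all $i\neq j$, so $\bigcup_i\varphi_*(a_i)$ is a multicurve on the closed surface $S'$ with exactly $K(3g-3)$ components, where $K=|\varphi_*(c)|$. Bounding by the maximal size $3g'-3$ of a multicurve on $S'$ yields $K(3g-3)\leq 3g'-3$ directly --- no component count, no Euler-characteristic bookkeeping, no appeal to the genus hypotheses beyond what is already needed for the two lemmas. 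The point you are missing is simply to use enough curves so that the multicurve bound on $S'$ does all the work.
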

\begin{proof}
	Choose $a_1,\dots, a_{3g-3}$ disjoint non-separating curves such that each $S\setminus (a_i \cup a_j)$ is connected for any $i,j$. From Lemma \ref{lema:debilmenteirred_no_comparten_comp}, we deduce that $\bigcup_{i=1}^{3g-3} \varphi_{*}(a_i)$ has $K(3g-3)$ distinct curves, where $K=|\varphi_{*}(c)|$. Since we may assume $S'$ is closed (see Lemma \ref{lema:olvidar_no_reduce_curvas}), we have that \[ K(3g-3) \leq 3{g'}-3 \]
	and the statement follows. 
\end{proof}

\subsection{Dehn twists to multitwists}\label{secDehnsAMultis}

In this section we complete the proof of Theorem \ref{thm:clasificacion}. So far, we showed that the image of a non-separating Dehn twist is an infinite order element, root of a multitwist and that the associated multicurve has at most $\frac{g'-1}{g-1}$ components. The last step is to show that the image cannot be a proper root of a multitwist, and hence it is actually a multitwist. \new{The argument is by induction on the genus $g$, by extracting information from the inclusion homomorphism of smaller genus subsurfaces. }

\begin{proposition}{(Base case)}
	Let $S,S'$ be two orientable finite-type surfaces of genus  $g\geq 4$ and $g'\leq 6$, respectively. If $\varphi:\pmcg(S)\to \pmcg(S')$ is a homomorphism fixing no curve, then $\varphi$ is multitwist preserving. 
\end{proposition}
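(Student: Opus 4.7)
The plan is to combine the counting bound and the torsion-free result of the previous subsections with a Birman--Hilden analysis to rule out proper roots. First, by Proposition \ref{prop:contando_curvas} the number of components satisfies
\[
|\varphi_*(c)|\leq\frac{g'-1}{g-1}\leq\frac{5}{3}<2,
\]
so $|\varphi_*(c)|\leq 1$; and Proposition \ref{propMatandoTorsion}, which applies since $g'\leq 6\leq 2^g-2$, ensures $\varphi(d_c)$ has infinite order. Therefore $\varphi_*(c)$ is non-empty, and we get a single curve $b$ on $S'$ with $\varphi(d_c)^n=d_b^m$ for some $m\neq 0$ and minimal $n\geq 1$. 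The remaining task is to show $n=1$.

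To rule out $n\geq 2$, I would argue by contradiction. Set $f=\varphi(d_c)$. Since $f$ centralizes $f^n=d_b^m$, hence $d_b$, it follows that $f\in\centralizer(b)$, and Theorem \ref{thm:abelianization_pmcg} upgrades this to $\varphi(\centralizer_0(c))\subset\centralizer_0(b)$ along the lines of Lemma \ref{lema:preserva_mts_masque_reducible}. Composing with the cutting homomorphism yields
\[
\Psi\colon \centralizer_0(c)\xrightarrow{\varphi}\centralizer_0(b)\xrightarrow{\cut_b}\pmcg(S'_b),
\]
and the remark after \eqref{eq:hom_cortado} gives that $\tilde f:=\Psi(d_c)$ has order exactly $n$. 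The plan is then to apply the Birman--Hilden exact sequence \eqref{eq:hom_birman_hilden} to a prime-order power of $\tilde f$: Lemma \ref{lemma:birman_hilden_bounds} together with the bound $g(S'_b)\leq g'-1\leq 5$ produces a quotient surface $Q$ of genus at most $2$ and an embedding $\centralizer(\tilde f)/\langle\tilde f\rangle\hookrightarrow\mcg^{\pm}(Q)$. Since $d_c$ maps into $\langle\tilde f\rangle$, the composition descends to a homomorphism $\overline\Psi\colon \pmcg(S_c)\to\mcg^{\pm}(Q)$, where $S_c$ is $S$ cut along $c$.

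Now, since $\pmcg(S_c)$ has trivial abelianization (Theorem \ref{thm:abelianization_pmcg}, as $g-1\geq 3$), the image of $\overline\Psi$ must land in $\pmcg(Q)$. I would then invoke Aramayona--Souto's classification \cite{aramayona_homomorphisms_2013}, which applies since $g(Q)\leq 2\leq 2(g-1)-1$: $\overline\Psi$ is either trivial or induced by an embedding, but the latter requires $g(Q)\geq g(S_c)=g-1\geq 3$, impossible. Hence $\overline\Psi$ is trivial, forcing $\Psi(\centralizer_0(c))\subset\langle\tilde f\rangle$ and so $\varphi(\centralizer_0(c))\subset\langle f,d_b\rangle$, which is abelian. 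Combined once more with the triviality of the abelianization of $\pmcg(S_c)$, the abelianization of $\centralizer_0(c)$ is the cyclic group $\langle d_c\rangle$, so $\varphi(\centralizer_0(c))=\langle f\rangle$.

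To extract a contradiction, I would pick disjoint non-separating curves $a_1,a_2\subset S\setminus c$ with $S\setminus(a_1\cup a_2)$ connected, which exist since $g\geq 4$. By change of coordinates there is $h\in\centralizer_0(c)$ with $h(a_2)=a_1$, and since $\varphi(h)\in\langle f\rangle$ is abelian, conjugation by $\varphi(h)$ is trivial, giving
\[
\varphi(d_{a_1})=\varphi(h)\,\varphi(d_{a_2})\,\varphi(h)^{-1}=\varphi(d_{a_2}),
\]
contradicting Lemma \ref{lemma:exercise}. The hardest part of this plan is the separating-$b$ case: if $b$ separates $S'$ then $S'_b$ is disconnected and $\tilde f$ must be analyzed componentwise, applying Lemma \ref{lemma:birman_hilden_bounds} to each piece; the genus bound on the resulting quotient pieces should remain small enough to block an embedding in the Aramayona--Souto step, so that the same contradiction goes through.
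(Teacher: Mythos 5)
Your plan follows essentially the same route as the paper: use Proposition \ref{prop:contando_curvas} and Proposition \ref{propMatandoTorsion} to reduce to $\varphi(d_c)$ being a proper root of a power of a single Dehn twist $d_b$, cut along $b$ to turn the root into a finite-order element, push it through the Birman--Hilden sequence to a surface $Q$ of genus at most two, and kill the resulting map on $\pmcg(S_c)$ via Aramayona--Souto (the paper phrases this as: the map would be nontrivial and genus-reducing, contradicting their Proposition 7.1 — logically the same contradiction you run in the opposite order). Two small corrections are needed. First, trivial abelianization of $\pmcg(S_c)$ only places the image of $\overline{\Psi}$ in the index-two subgroup $\mcg(Q)$ of $\mcg^{\pm}(Q)$; to land in $\pmcg(Q)$, which Aramayona--Souto requires, you must also kill the permutation action on the at most twelve punctures of $Q$, and this is exactly what Theorem \ref{thm:berrick_gebhardt_paris} supplies (it is how the paper proceeds at this point). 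Second, the abelianization of $\centralizer_0(c)$ is trivial, not $\langle d_c\rangle$: it is a quotient of the perfect group $\pmcg(S_c)$. This actually shortens your endgame — once $\varphi(\centralizer_0(c))$ lies in the abelian group $\langle f, d_b\rangle$, perfectness forces the restriction of $\varphi$ there to be trivial, contradicting that $f=\varphi(d_c)$ has infinite order, so the final conjugation argument with $a_1,a_2$ is unnecessary. The separating case, which you only sketch, is handled in the paper by projecting to a component of $S'\setminus b$ on which the cut element remains nontrivial and running the identical Birman--Hilden estimate there; the genus bounds indeed stay below $g-1$, as you anticipate.
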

\begin{proof}
	Let $c$ be a non-separating curve on $S$. Looking for a contradiction,  assume  $\varphi(d_c)$ is a proper root of a multitwist of degree $k>1.$
	
	By Proposition  \ref{prop:contando_curvas}, we have that  $|\varphi_*(c)|\leq \frac{g'-1}{g-1}<2$ and so $\varphi(d_c^k)$ is a power of a Dehn twist. The curve $\varphi_{*}(c)$ is either separating or non-separating; we treat these two cases separately. 
	
	If $\varphi_{*}(c)$ is non-separating, then $R' = S' \setminus\varphi_{*}(c)$ is a connected surface. Let ${S_c}\subset S$ be the complement of an open regular neighborhood of $c$. The inclusion $S_c \xhookrightarrow{} S$  induces an epimorphism  $\pmcg(S_c) \xrightarrow{\iota} \mathcal{Z}_0(c)$.  By Lemma \ref{lema:si_fijas_curvas_con_orientacion_fijas_imagen_de_esa_curva_con_orient}, we may consider the cutting homomorphism \[\gamma=\cut_{\varphi_{*}(c)} \circ \varphi \circ \iota:  {\pmcg({S_c})} \to \pmcg(R').\]
	Notice that $R'$ is a surface with at least two punctures and of genus at most five. Also, the image of $\gamma$ commutes with the element $h=\gamma(d_c)$ of order $k$.  Up to taking a power of $h$, we can assume $h$ has primer order. Thus, we can consider the Birman-Hilden map defined  in Equation (\ref{eq:hom_birman_hilden}): Let $Q'$ be the surface obtained by removing the singular points from the quotient $R'/\langle h\rangle$ and let $\psi$ be the Birman-Hilden map induced by $h$, we have that
	\[ \pmcg(S_c) \xrightarrow{\gamma} \centralizer(h) \xrightarrow{\psi}\mcg^\pm(Q').  \]
	Recall that the kernel of  $\psi$ is $\langle h\rangle$. Since $\pmcg({S_c})$ has trivial abelianization as $g\geq 4$, the composition  $\psi \circ \gamma$ is a nontrivial homomorphism. Even more, the image of $\psi\circ \gamma$ is contained in $\mcg(Q')$.

	\new{Since $R'$ has genus at most five and at least two punctures, Lemma \ref{lemma:birman_hilden_bounds} imply} the surface $Q'$ has genus $\tilde{g} < 3$ and  $\tilde{p} \leq 12$ punctures.  By Theorem \ref{thm:berrick_gebhardt_paris}, the image of  $\psi \circ \gamma$ fixes each puncture in  $Q'$. This implies that 
	\[\psi\circ \gamma: \pmcg({S_c}) \rightarrow \pmcg(Q') \]
	 is a nontrivial map that reduces the genus of the involved surfaces, however this contradicts \cite[Proposition 7.1]{aramayona_homomorphisms_2013}.
	
	We are left with the case where $\varphi_*(c)$ is a separating curve. We denote $R'_1$ and $R'_2$ the connected components of $S'\setminus\varphi_{*}(c)$. Lemma \ref{lema:olvidar_no_reduce_curvas} implies that $\varphi_{*}(c)$  does not bound  a punctured disk, and so the surfaces $R'_1, R'_2$ both have positive genus at most five. Moreover, both surfaces $R'_1, R'_2$ have at least one puncture \new{coming from the deleted curve $\varphi_*(c)$}. As before, consider the composition  
	\[ \pmcg(S_c) \xrightarrow{\iota} \mathcal{Z}_0(c) \xrightarrow{\cut_{\varphi_{*}(c)} \circ \varphi} \pmcg(R'_1) \times \pmcg(R'_2). \]
	We denote the projections   $\gamma_i=\pi_i\circ \cut_{\varphi_{*}(c)} \circ \varphi  \circ \iota $ for $i=1,2$, where $\pi_1, \pi_2$ are the projections to the first and second component. Note that at least  one of the two projections  is nontrivial. Without loss of generality, assume $h=\gamma_1(d_c)$ is nontrivial. We have that  $ h=\gamma_1(d_c)$  is an element of finite order $k'>1$ and commutes with the image of $\gamma_1$. As in the previous paragraph, we have a map 
	\[\pmcg({S_c}) \xrightarrow{\gamma_1} \centralizer_0(h) \xrightarrow{ \psi} \mcg(Q'),\]
	where $Q'$ is the surface obtained by removing the singular points from the quotient $R'_1/\langle h\rangle$.
	
	Again,  we know  $\psi \circ \gamma_1$ is a nontrivial map.  Also, by  Theorem \ref{thm:berrick_gebhardt_paris} the image of $\psi \circ \gamma_1$ is contained in $\pmcg(Q')$, since the number of punctures of   $Q'$ is $\tilde{p}\leq 12$ (\new{see Lemma \ref{lemma:birman_hilden_bounds}}). \new{As before, Lemma \ref{lemma:birman_hilden_bounds} imply} the surface  $Q'$ has genus at most  two. We conclude that $\psi \circ \gamma_1$  is a nontrivial homomorphism between pure mapping class groups reducing the genus of the involved surfaces. Again,  this contradicts \cite[Proposition 7.1]{aramayona_homomorphisms_2013}. 
\end{proof}

Before finishing the proof of Theorem \ref{thm:clasificacion} with the inductive step, we state the last lemma we  require.

\begin{lemma}\label{lemma:orden_finito_reducible}
	Let $X$ be an orientable finite-type surface, let  $C=\{c_1,c_2 ,\dots\}$ be a multicurve on $X$ and let $Y_i$  for $i=1, \dots, m$ be the connected components of $S\setminus C$. If $h\in \pmcg(X)$ is an element of finite order $k\in \mathbb{N}$ and $h$ fixes each curve  $c_i$ with orientation, then  $h|_{Y_i}$ induces an element of order $k$ in $\pmcg(Y_i)$ for every $i=1, \dots, m$.
\end{lemma}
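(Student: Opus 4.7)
The plan is to realize $h$ by an honest finite-order homeomorphism and then exploit the rigidity of such maps on surfaces. By the Nielsen realization theorem, equip $X$ with an $\langle h\rangle$-invariant Riemannian metric of constant curvature, and take $\phi:X\to X$ to be an isometry of order $k$ representing $h$. Since $h$ is assumed to fix each $c_i$ with orientation and geodesic representatives of $\langle h\rangle$-invariant isotopy classes are themselves $\phi$-invariant, $\phi$ preserves the oriented geodesic representative of each $c_i$. Being orientation-preserving, $\phi$ also preserves the two sides of each $c_i$ and hence preserves each complementary region $Y_i$ setwise; the restriction $\phi|_{Y_i}$ is an isometry of $Y_i$.

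Let $k_i$ denote the order of the mapping class $[\phi|_{Y_i}]$ in $\pmcg(Y_i)$; clearly $k_i \mid k$. Suppose, for contradiction, that $k_i<k$ for some index $i$. Then $\phi^{k_i}$ is a nontrivial isometry of $X$ of order $k/k_i>1$ whose restriction to $Y_i$ is isotopic to the identity in $\pmcg(Y_i)$. Since the isometry group of a finite-type hyperbolic (or Euclidean) surface injects into its mapping class group (isometries are rigid), $\phi^{k_i}|_{Y_i}$ must in fact equal the identity isometry of $Y_i$.

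An isometry of a connected Riemannian manifold is determined by its value and differential at a single point. Consequently, an isometry that is the identity on a nonempty open set is the identity everywhere. This forces $\phi^{k_i}=\text{id}$ on all of $X$, contradicting the fact that $\phi^{k_i}$ has order $k/k_i>1$. Therefore $k_i=k$ for every $i$, as claimed.

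The main technical point is ensuring that the Nielsen realization can be arranged so that $\phi$ preserves each $c_i$ setwise with orientation; this is standard once one works with an $\langle h\rangle$-invariant hyperbolic structure and uses that geodesic representatives of $\langle h\rangle$-invariant isotopy classes are automatically $\phi$-invariant. The secondary point is the rigidity step, which pulls the conclusion in $\pmcg(Y_i)$ back to an equality of isometries on $Y_i$ and then globalizes it to $X$; both facts are classical for finite-type surfaces of negative Euler characteristic (and the remaining low-complexity cases reduce to elementary verifications).
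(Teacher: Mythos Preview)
Your proof is correct and follows essentially the same approach as the paper: realize $h$ by an isometry via Nielsen realization, pass to geodesic representatives of the $c_i$ so that each $Y_i$ is preserved, and then use rigidity of isometries (an isometry that is the identity on a nonempty open set is globally the identity) together with the injection of the isometry group into the mapping class group to conclude. The paper phrases the last step as ``the isometry $\phi$ is locally nontrivial,'' but this is exactly your rigidity argument, applied to each nontrivial power of $\phi$.
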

\begin{proof}
	We may realize  $h$ by an isometry $\phi$ of a hyperbolic structure in $X$ (see \cite[Theorem 5]{kerckhoff_nielsen_1983}). Realizing the curves in $C$ by geodesics, $\phi$ restricts to each component  $Y_i$ as a finite order element. Note the isometry  $\phi$ is locally nontrivial, since the only locally trivial isometry  is the identity. Therefore, $\phi|_{Y_i}$ is a diffeomorphism of finite order in each $Y_i$ and so induces a non-trivial element of finite order in each $\pmcg(Y_i)$. Since the argument can be repeated for any nontrivial power of $\phi$, we conclude that the diffeomorphism  $\phi|_{Y_i}$ has order $k$ and hence also does the induced mapping class. 
\end{proof}

Finally,  we complete the proof of Theorem  \ref{thm:clasificacion}.

\begin{proof}[Proof of Theorem \ref{thm:clasificacion} (inductive step)] 
	Let $\varphi: \pmcg(S)\to \pmcg(S')$ be a homomorphism fixing no curve, where $S, S'$ are finite-type orientable surfaces of genus $g\geq 5$ and $g' \leq \cota$, respectively. 
	
	Let $c_1$ be a non-separating curve in $S$ and let $S_{c_1}$ be the complement of an open regular neighborhood of $c_1$. The inclusion $S_{c_1}\xhookrightarrow{}S$ induces the epimorphism  $ \iota: \pmcg(S_{c_1}) \to \mathcal{Z}_0(c_1)$. By Lemma \ref{lema:si_fijas_curvas_con_orientacion_fijas_imagen_de_esa_curva_con_orient},    $\mathcal{Z}_0(c_1)$  is a subgroup of  $\varphi(\mathcal{Z}_0(\varphi_*(c_1)))$. Therefore, we may consider the following composition of maps
	\[ \pmcg({S_{c_1}})  \xrightarrow{\varphi\circ \iota} \mathcal{Z}_0(\varphi_*(c_1))\xrightarrow{\cut_{\varphi_*(c_1)}} \prod_i \pmcg(R'_i), \]
	where $R'_i$ are the connected components of $S\setminus  \varphi_*(c_1)$. 
	
	We define the maps 
	\[ \varphi_j=\pi_j \circ \cut_{\varphi_*(c_1)} \circ \varphi \circ \iota, \]
	where $\pi_j$ is the projection to the $j$th component. If all the projections $\varphi_j$ are multitwist preserving, then $\varphi$ is also multitwist preserving.  Seeking a contradiction, we assume that at least one projection $\varphi_j$ is not multitwist preserving. 
	
	First, suppose there are two distinct projections  $\varphi_j , \varphi_{j'}$ that are not multitwist preserving. \new{As the surface $S_{c_1}$ has genus $g-1$,  induction on the genus implies that} both surfaces $R'_j$ and $R'_{j'}$ have genus greater than $6\cdot 2^{g-5}$. It follows that  $g' >6 \cdot 2^{g-4}$, which contradicts the hypothesis of the statement. 
	
	To complete the proof, we rule out the case where there is a unique projection $\varphi_j$ that is not multitwist preserving. Without loss of generality, we  assume that $\varphi_1$ is the unique non-multitwist preserving projection.

	\begin{figure}[h]
		\begin{center}
			\includegraphics[width=0.6\linewidth]{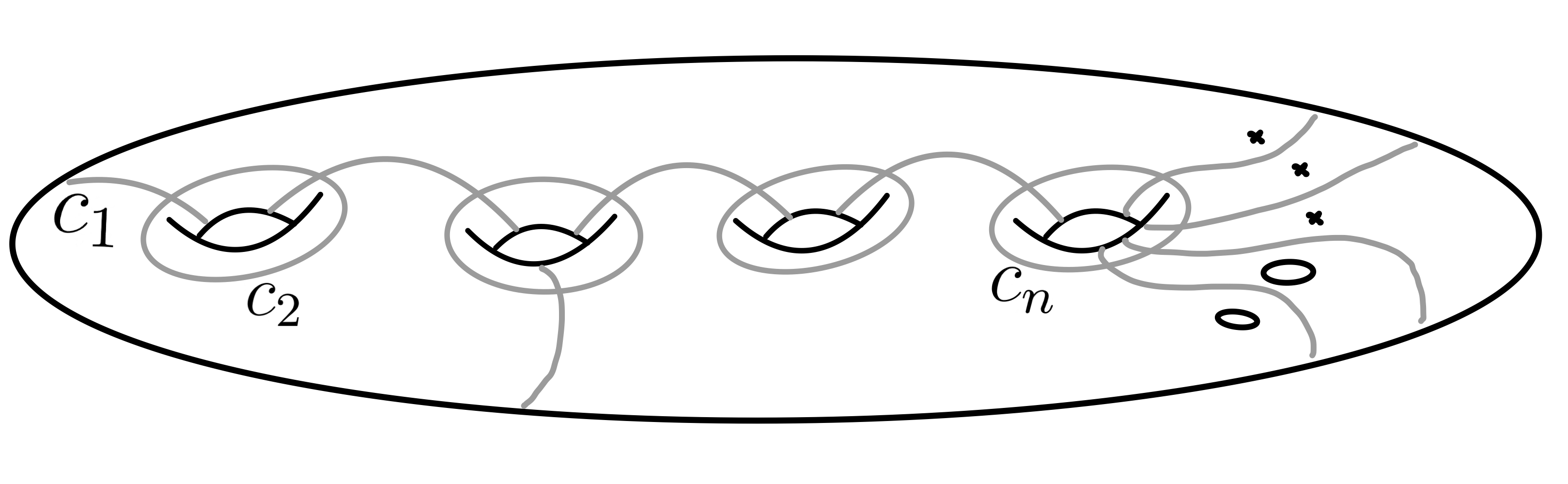}
			\caption{The set of curves $H$.}
			\label{fig:curves}
		\end{center}
	\end{figure}
	
	The contradiction follows easily from the next two claims. Choose $\mathcal{H}$ a Humphries generating set with $d_{c_1}\in \mathcal{H}$ and let $H=\{c|\: d_c\in \mathcal{H}\}$.
	
	\begin{claim}\label{claim:descomposicion}
		Consider $c_i\in H$. We can decompose $\varphi(d_{c_i})$ as 
		\[ \varphi(d_{c_i}) = M_i h_i, \]
		where $M_i$ is a multitwist with components contained in $\varphi_{*}(c_i)$ and $h_i$ is a finite order element \new{with $h_i^k=\text{id}$} that fixes every curve  $x\in \varphi_{*}(c_j)$ for every $c_j\in H$.
	\end{claim}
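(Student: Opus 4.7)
The plan is to construct the decomposition via Nielsen realization applied to the cutting homomorphism along $\varphi_*(c_i)$, and then verify the fixing property by leveraging that $\varphi_\ell$ is multitwist-preserving for $\ell \geq 2$.

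For the decomposition, first note that $\varphi(d_{c_i}) \in \centralizer_0(\varphi_*(c_i))$ by Lemma \ref{lema:si_fijas_curvas_con_orientacion_fijas_imagen_de_esa_curva_con_orient}. Write $T'_1, \ldots, T'_m$ for the connected components of $S' \setminus \varphi_*(c_i)$ and consider the cutting homomorphism $\cut_{\varphi_*(c_i)}: \centralizer_0(\varphi_*(c_i)) \to \prod_\ell \pmcg(T'_\ell)$; its kernel is generated by Dehn twists along $\varphi_*(c_i)$ and is central in the domain (since any element of $\centralizer_0(\varphi_*(c_i))$ fixes each curve of $\varphi_*(c_i)$ with orientation). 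Because $\varphi(d_{c_i})^k$ is a multitwist on $\varphi_*(c_i)$, the image $\cut_{\varphi_*(c_i)}(\varphi(d_{c_i}))$ has order dividing $k$. A Nielsen realization argument applied on each $T'_\ell$, with the rotation parameters at the cut curves of $\varphi_*(c_i)$ chosen so that the glued diffeomorphism is itself finite-order, produces a lift $h_i \in \pmcg(S')$ with $h_i^k = \text{id}$ and $\cut_{\varphi_*(c_i)}(h_i) = \cut_{\varphi_*(c_i)}(\varphi(d_{c_i}))$. Then $M_i := \varphi(d_{c_i}) h_i^{-1}$ lies in $\ker \cut_{\varphi_*(c_i)}$, so $M_i$ is a multitwist with components in $\varphi_*(c_i)$; centrality of the kernel gives $\varphi(d_{c_i}) = M_i h_i = h_i M_i$.

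The fixing property is then checked by cases on $c_j \in H$. When $c_j = c_i$, the containment $h_i \in \centralizer_0(\varphi_*(c_i))$ is automatic. When $c_j$ is disjoint from $c_i$ with $S \setminus (c_i \cup c_j)$ connected, Lemma \ref{lema:debilmenteirred_no_comparten_comp} yields $\varphi_*(c_i) \cap \varphi_*(c_j) = \emptyset$; moreover $d_{c_i}, d_{c_j}$ commute, so $\varphi(d_{c_i})$ commutes with $\varphi(d_{c_j})^k$ and by Lemma \ref{lema:si_fijas_curvas_con_orientacion_fijas_imagen_de_esa_curva_con_orient} fixes each $x \in \varphi_*(c_j)$; since $M_i$ fixes $x$ by disjointness, we conclude $h_i(x) = M_i^{-1}\varphi(d_{c_i})(x) = x$. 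For the remaining $c_j$, I would exploit the hypothesis that $\varphi_\ell$ is multitwist-preserving for each $\ell \geq 2$: after checking $h_i \in \centralizer_0(\varphi_*(c_1))$, the projection of $h_i$ to each $\pmcg(R'_\ell)$ with $\ell \geq 2$ equals a product of multitwists (coming from $\varphi_\ell(d_{c_i})$ and from the projection of $M_i$), so it is a multitwist; being finite-order it must be the identity. Consequently $h_i$ is supported inside $R'_1$ in the mapping class sense, and it automatically fixes every curve of $\varphi_*(c_j)$ sitting outside $R'_1$.

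The hard part will be showing that $h_i$ individually fixes the curves of $\varphi_*(c_j)$ lying inside the exceptional component $R'_1$. My expectation is that this requires refining the Nielsen realization on $R'_1$ to simultaneously preserve the multicurves $\varphi_*(c_j) \cap R'_1$, which is possible once one verifies that $\cut_{\varphi_*(c_i)}(\varphi(d_{c_i}))$ preserves each such multicurve setwise. I plan to derive this from the braid relations satisfied by the images $\varphi(d_{c_i})$ and $\varphi(d_{c_j})$ together with the combinatorial constraints on $\varphi_*$ from Section \ref{secContando}. A secondary subtlety arises when $i(c_i, c_1) = 1$, in which case one must first give a separate argument to place $h_i$ in $\centralizer_0(\varphi_*(c_1))$ before invoking the multitwist-preserving property of the projections.
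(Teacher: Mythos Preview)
Your construction of $h_i$ as a finite-order lift of $\cut_{\varphi_*(c_i)}(\varphi(d_{c_i}))$ has a genuine gap. A finite-order lift through the central extension with kernel $\mathbb{Z}^{|\varphi_*(c_i)|}$ exists if and only if each exponent of the multitwist $\varphi(d_{c_i})^k$ is divisible by $k$, and nothing in your argument establishes this; proper roots of a single Dehn twist already show that the obstruction can be nonzero. Your phrase ``rotation parameters \ldots chosen so that the glued diffeomorphism is itself finite-order'' hides exactly this point: the rotation numbers on the two sides of each curve of $\varphi_*(c_i)$ are dictated by the Nielsen realizations on the pieces and need not be compatible.

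The paper avoids this obstruction by reversing the order of the two desired properties. It first proves (Steps~1--2 of the proof) that $\varphi(d_{c_1})$ fixes each \emph{arc} of $\varphi_*(c_2)$ after cutting along $\varphi_*(c_1)$, with orientation. This is the real content: it introduces an auxiliary curve $c_n$ disjoint from $c_1$ and $c_2$, performs a further cut along $\varphi_*(c_n)$, and uses that finite abelian subgroups of pure mapping class groups of punctured surfaces are cyclic (Remark~\ref{rmk:puntos_marcados_subgrupos_finitos_ciclicos}) to obtain $\psi_i(d_{c_1})=\psi_i(d_{c_n})^{m_i}$ on every piece; since $\varphi(d_{c_n})$ fixes $\varphi_*(c_2)$ with orientation by Lemma~\ref{lema:si_fijas_curvas_con_orientacion_fijas_imagen_de_esa_curva_con_orient}, so does each $\psi_i(d_{c_1})$. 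The arc-fixing then \emph{defines} $M_1$ as the multitwist on $\varphi_*(c_1)$ for which $h_1:=M_1^{-1}\varphi(d_{c_1})$ fixes every curve of $\varphi_*(c_2)$, and only afterwards (Step~4) is $h_1$ shown to have finite order, via the Alexander method applied to the system $\bigcup_j \varphi_*(c_j)$. Your sketch of the ``hard part'' speaks only of preserving multicurves setwise and of braid relations; it misses the cyclic-subgroup mechanism entirely, and setwise preservation would in any case not suffice to pin down the twist parameters in $M_i$.
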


	\begin{claim}\label{propiedad:los_elementos_se_trenzan}
		Consider $c_i, c_j \in H$. If $i(c_i, c_j)=1$, then $h_ih_jh_i = h_jh_ih_j$ and $M_iM_jM_i = M_jM_iM_j$. 
	\end{claim}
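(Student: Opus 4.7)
My plan is to apply $\varphi$ to the braid relation $d_{c_i}d_{c_j}d_{c_i}=d_{c_j}d_{c_i}d_{c_j}$ holding in $\pmcg(S)$ (since $i(c_i,c_j)=1$), substitute the decomposition $\varphi(d_{c_m})=M_mh_m$ from Claim~\ref{claim:descomposicion}, and separate the resulting identity into its multitwist and finite-order parts. Claim~\ref{claim:descomposicion} says each $h_m$ fixes every curve of each $\varphi_{*}(c_\ell)$ with orientation, so $h_m$ commutes with the multitwist $M_\ell$ for every $\ell,m$. Pushing every $h$-factor past every $M$-factor transforms the braid relation for $\varphi(d_{c_i})$ and $\varphi(d_{c_j})$ into
\[
(M_iM_jM_i)\cdot(h_ih_jh_i)=(M_jM_iM_j)\cdot(h_jh_ih_j). \qquad(\star)
\]

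With $(\star)$ in hand, the core of the proof is to show that both factors on each side match. My approach is to rearrange $(\star)$ as
\[
(M_jM_iM_j)^{-1}(M_iM_jM_i) = (h_jh_ih_j)(h_ih_jh_i)^{-1}
\]
and to apply the cutting homomorphism $\cut_{\varphi_{*}(c_i)}$: both sides lie in $\centralizer_0(\varphi_{*}(c_i))$ since the $h_m$ fix $\varphi_{*}(c_i)$ with orientation. By Lemma~\ref{lemma:orden_finito_reducible} the cut images $\bar h_m$ retain order $k$, while the kernel of $\cut_{\varphi_{*}(c_i)}$ is a torsion-free free abelian group. Combining these observations should let me conclude that the right-hand side projects trivially and, once lifted back, equals the identity, yielding $h_ih_jh_i=h_jh_ih_j$. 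Feeding this back into $(\star)$ then gives $M_iM_jM_i=M_jM_iM_j$.

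The main obstacle lies in the cutting step. By Lemma~\ref{lema:preservar_mults_no_comparten_curvas_2}, each component of $\varphi_{*}(c_i)$ intersects exactly one component of $\varphi_{*}(c_j)$, so $\varphi_{*}(c_i)\cup\varphi_{*}(c_j)$ is \emph{not} a multicurve and $M_j\notin\centralizer_0(\varphi_{*}(c_i))$. Consequently, $(\star)$ cannot be cut along $\varphi_{*}(c_i)$ factor by factor. I expect overcoming this will require computing $(M_jM_iM_j)^{-1}(M_iM_jM_i)$ explicitly inside the subgroup generated by the Dehn twists about $\varphi_{*}(c_i)\cup\varphi_{*}(c_j)$, using that $M_j$ acts on $\varphi_{*}(c_i)$ by twisting each $x_\ell\in\varphi_{*}(c_i)$ around its paired partner $y_\ell\in\varphi_{*}(c_j)$ from Lemma~\ref{lema:preservar_mults_no_comparten_curvas_2}, and invoking Theorem~\ref{lema:trenzas} on braided multitwists in each torus neighborhood of $x_\ell\cup y_\ell$ to derive the multitwist braid relation locally. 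Once $M_iM_jM_i=M_jM_iM_j$ is secured this way, $(\star)$ forces $h_ih_jh_i=h_jh_ih_j$.
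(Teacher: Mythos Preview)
Your derivation of $(\star)$ is correct, and so is the observation that each $h_m$ commutes with every $M_\ell$. But the separation step fails, and your proposed workaround does not close the gap. First, Lemma~\ref{lema:preservar_mults_no_comparten_curvas_2} is stated for \emph{multitwist-preserving} homomorphisms; in the inductive step of Theorem~\ref{thm:clasificacion}, where this claim lives, $\varphi$ is not yet known to be multitwist preserving, so you cannot invoke it. Step~3 of Claim~\ref{claim:descomposicion} only tells you that every curve of $\varphi_*(c_i)$ intersects \emph{some} curve of $\varphi_*(c_j)$, not that the intersection number is exactly one, so the ``local torus'' picture you want to exploit via Theorem~\ref{lema:trenzas} is not available. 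Second, even granting the intersection pattern, Theorem~\ref{lema:trenzas} goes the wrong direction: it describes the structure of multitwists that \emph{already} braid, whereas you are trying to prove that $M_i,M_j$ braid. Your cutting argument also stalls because, as you note, $M_j\notin\centralizer_0(\varphi_*(c_i))$, and there is no reason the image of $(h_jh_ih_j)(h_ih_jh_i)^{-1}$ under $\cut_{\varphi_*(c_i)}$ should be trivial.

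The paper's approach avoids all of this with a single clean trick: work with the \emph{conjugation} form of the braid relation, $d_{c_i}^{\,d_{c_i}d_{c_j}}=d_{c_j}$, and take $k$-th powers. Since $h_m^k=\mathrm{id}$ and $[M_m,h_m]=1$, one has $\varphi(d_{c_m})^k=M_m^k$, so applying $\varphi$ and raising to the $k$-th power yields
\[
\bigl(M_i^{\,\varphi(d_{c_i}d_{c_j})}\bigr)^k=M_j^k.
\]
Both sides are multitwists, and multitwists have unique $k$-th roots among multitwists (the fact $m_A^p=m_B^p\Rightarrow m_A=m_B$ recorded in Section~\ref{sec:preliminares}), so $M_i^{\,\varphi(d_{c_i}d_{c_j})}=M_j$. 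Feeding this back into $\varphi(d_{c_i})^{\,\varphi(d_{c_i}d_{c_j})}=\varphi(d_{c_j})$ gives $h_i^{\,\varphi(d_{c_i}d_{c_j})}=h_j$; then writing $\varphi(d_{c_i}d_{c_j})=M_iM_j\,h_ih_j$ and using that every $h_m$ commutes with every $M_\ell$ immediately yields both braid relations. The missing idea in your attempt is precisely this $k$-th power plus multitwist-uniqueness manoeuvre.
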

	We assume both claims and defer their proof for later.

	Note the finite order element $h_i$ fixes the curves in $\varphi_{*}(c_i)$. In particular, $h_i$ commutes with $M_i$, which implies that $\varphi(d_{c_i}^k)=M_i^k$. Thus, the components of the multitwist $M_i$ are precisely the curves $\varphi_{*}(c_i)$.

	By \neww{Claim \ref{propiedad:los_elementos_se_trenzan}, Lemma \ref{lema:debilmenteirred_no_comparten_comp} and Remark \ref{rmk:v2}}, \new{we have a graph isomorphism between $\bigcup_{c_i\in H} c_i$ and each connected component of $\bigcup_{c_i\in H} \varphi_{*}(c_i)$. Let $Z$ and $Z'$ be closed regular neighborhoods of $\bigcup_{c_i\in H} c_i$ and $\bigcup_{c_i\in H} \varphi_{*}(c_i)$, respectively. \neww{It follows from Theorem \ref{lema:trenzas} that any two curves in $\bigcup_{c_i\in H} \varphi_{*}(c_i)$ are disjoint or (transversely) intersect exactly once, thus} the graph isomorphism induces a homeomorphism between $Z$ and each connected component of $Z'$.}
	 
	 If the curves $\bigcup_{c_i\in H} \varphi_{*}(c_i)$  fill $S'$, then $Z'$ has a single connected component. \new{Observe that the curves $\bigcup_{c_i\in H} \varphi_{*}(c_i)$ either intersect once or are disjoint, and no three distinct curves pairwise intersect}.  Therefore, the Alexander method \cite[Proposition 2.8]{farb_primer_2012} yields that $h_1$ is trivial and so $\varphi$ is multitwist preserving as $\varphi(d_{c_1}) = M_1$.  
	 
	 If the curves $\bigcup_{c_i\in H} \varphi_{*}(c_i)$ do not fill $S'$, then there exists a curve $x$ on the boundary $\partial Z'$ which is essential on $S'$. Since $h_1$ fixes each curve in $\varphi_{*}(c_1)$, then $h_1$ fixes each component  of $Z'$.  Even more, the Alexander method implies that $h_1$ \new{is trivial restricted to each component of $Z'$} and so it fixes the boundary curve $x$. Repeating the argument for every $h_j$, we deduce that any $h_j$ also fixes the curve $x$. It follows that the image of $\varphi$ fixes the curve $x$, but this contradicts that $\varphi$ fixes no curve.  
	 
	 We are only left to prove the claims.

	 \begin{proof}[Proof of Claim \ref{claim:descomposicion}]
	 	
	 	Let $c_2$ be a curve on $S$ with $i(c_1, c_2)=1$. Since $g\geq 5$, we may choose a curve $c_n$ on $S$ with $i(c_1, c_n)=i(c_2, c_n)=0$ and such that both $S\setminus(c_1\cup c_n)$ and $S\setminus (c_2 \cup c_n)$ are connected (see Figure \ref{fig:curves}).

	 	Note the element $\varphi_1(d_{c_1})$ has order $k$, where $k$ is the degree of the proper root $\varphi(d_{c_1})$.  Indeed, Theorem  \ref{thm:preserva_multis_emb} implies that all nontrivial projections  $\varphi_i$ with $i\ne 1$ are induced by multi-embeddings and so the elements   $\varphi_i(d_{c_1})$ are either trivial or have infinite order. Since    $\cut_{\varphi_*(c_1)} \circ \varphi (d_{c_1})$ has finite order, the elements $\varphi_i(d_{c_1})$ are trivial for  $i\ne 1$. It follows $\varphi_1(d_{c_1})$ has order $k$.
	 	
	 	Now, we cut the surface $R'_1$ along the curves in ${\varphi_{1}}_*(c_n)$. Denote by ${S_{c_1\cup c_n}}$ the complement to an open regular neighborhood of  $c_1\cup c_n$. The inclusion $S_{c_1\cup c_n} \to S_{c_1}$ induces a homomorphism $\sigma: \pmcg(S_{c_1\cup c_n}) \to \pmcg(S_{c_1})$. Since the restriction $(\varphi_1 \circ \sigma )|_{\pmcg({S_{c_1\cup c_n}})}$ is contained in  $\mathcal{Z}_0({\varphi_{1}}_*(c_n)) \subset \pmcg(R'_1)$,  we can consider the  composition
	 	\[ 
	 	\pmcg(S_{c_1\cup c_n})   \xrightarrow{\varphi_1 \circ \sigma}  \mathcal{Z}_0({\varphi_{1}}_*(c_n)) \xrightarrow{\cut_{{\varphi_{1}}_*(c_n)}} \prod_i \pmcg(O'_i),
	 	\]
	 	where the $O'_i$ are the connected components of $R'_1 \setminus {\varphi_{1}}_*(c_n)$. Define  the projection map to each component $\pmcg(O'_i)$ as \[\psi_i = \pi_i \circ \cut_{{\varphi_{1}}_*(c_n)} \circ \varphi_1 \circ \sigma.\] 
	 	It follows from Lemma \ref{lemma:orden_finito_reducible} that $\psi_i(d_{c_1})$ has finite order $k$ for every $i$. 
	 	
	 	The rest of the proof is a bit technical and so we divide it into four steps. In the first step, we show that there exists $m_i\in \mathbb{N}$ such that $\psi_i(d_{c_1}) = \psi_i(d_{c_n})^{m_i}$. In the second step, we show that the element $\varphi(d_{c_1})$ fixes the curves $\varphi_*(c_2)$ outside an open neighborhood of $\varphi_{*}(c_1)$. In the third step, we show that every curve of $\varphi_{*}(c_1)$ intersects at least one curve of $\varphi_{*}(c_2)$ (and vice versa).  Finally, in the four step we show that $\varphi(d_{c_i})=M_i h_i$.
	 	
	 	\textbf{Step 1.}
	 	First, we show $\psi_i(d_{c_n})$ has finite order $k$ for every $i$. 
	 	
	 	Let $A=\varphi_{*}(c_1) \cup \varphi_{*}(c_n)$. Note  $\varphi(\mathcal{Z}_0(c_1)\cap \mathcal{Z}_0(c_n) )$ is contained in  $\mathcal{Z}_0(A)$.  Therefore, we may consider the composition
	 	\[ \mathcal{Z}_0(c_1)\cap \mathcal{Z}_0(c_n) \xrightarrow{\varphi} \mathcal{Z}_0(A) \xrightarrow{\cut_A} \prod_i \pmcg(W'_i),
	 	\]
	 	where the $W'_i$ are the connected  components of  $S' \setminus A$.  To simplify notation, for each projection $\pi_i$ to the ith coordinate we write 
	 	\[ \varphi(f)|_{W'_i} = \pi_i \circ \cut_A \circ \varphi (f) \;\;\text{for any }f \in  \mathcal{Z}_0(c_1)\cap \mathcal{Z}_0(c_n).\]
	 	
	 	Recall that 
	 	\[S' \setminus \varphi_{*}(c_1) = \sqcup_i R'_i\text{  and     }R'_1\setminus {\varphi_{1}}_*(c_n) =  \sqcup_i O'_i.\]
	 	And observe that the surfaces $\{O'_i\}$ are a subset of the surfaces   $\{W'_i\}$. Since $\varphi_i$ is non-multitwist preserving only for $i=1$, if $W'_i \not \in \{O'_j\}$  then the element $\varphi(d_{c_n})|_{W'_i}$ is trivial. Additionally, by Lemma \ref{lemma:orden_finito_reducible}, $\psi_i(d_{c_1})$ has order $k$ for every $O'_i$. These two facts imply
	 	\[ 
	 	\{W'_i|\: \varphi(d_{c_n})|_{W'_i} \ne \text{id}\}=\{O'_i|\: \psi_i(d_{c_n}) \ne \text{id}\} \subset \{O'_i\} \subset \{W'_i|\: \varphi(d_{c_1})|_{W'_i} \ne \text{id}\}.
	 	\]
	 	In words, after cutting by $A$, the set of subsurfaces for which $\varphi(d_{c_n})|_{W'_i}$ is nontrivial is a subset of the subsurfaces for which $\varphi(d_{c_1})|_{W'_i}$ is nontrivial.  However, the argument is symmetric, we may cut first along the multicurve  $\varphi_{*}(c_n)$ and then along the multicurve  $\varphi_{*}(c_1)$. This implies that, after cutting by $A$, the set of subsurfaces for which $\varphi(d_{c_n})|_{W'_i}$ is nontrivial is precisely the set of subsurfaces for $\varphi(d_{c_1})|_{W'_i}$ is nontrivial, that is,  both sets  coincide with  
	 	\[ \{O'_i|\: O'_i\text{ is a connected component of }R'_1\setminus {\varphi_{1}}_*(c_n) \}.\]
	 	
	 	Tracing back the definitions of the maps, we have that $\varphi(d_{c_1})|_{O'_i} = \psi_i(d_{c_1})$   and  $\varphi(d_{c_n})|_{O'_i} = \psi_i(d_{c_n})$.  Observe $\psi_i(d_{c_1}), \psi_i(d_{c_n})$ are commuting finite order elements \new{of order $k$} and together generate a finite abelian subgroup of  $\pmcg(O'_i)$.  Since $O'_i$ is a punctured surface, all  finite abelian subgroups of $\pmcg(O'_i)$ are cyclic. It follows that there exists $m_i\in \mathbb{N}$ such that
	 	\begin{equation}\label{eq:una_potencia_de_la_otra}
	 		\psi_i(d_{c_1}) = \psi_i(d_{c_n})^{m_i}.
	 	\end{equation}

	 	\textbf{Step 2.}
	 	After cutting $S'$ by the multicurve $\varphi_{*}(c_1)$, each curve in $\varphi_{*}(c_2)$ projects onto $R_i'$ either to a curve or to an arc joining punctures. The set containing such   curves and arcs is denoted ${\varphi_{i}}_*(c_2)$.  We start by showing that $\varphi_i(d_{c_1})\cdot a = a$ for every curve or arc $a\in {\varphi_{i}}_*(c_2)$ and  that $\varphi_i(d_{c_1})$ also fixes  the orientation of $a$.
	 	
	 	For $i\ne 1$, the element   $\varphi_i(d_{c_1})$ is the identity. So, we trivially obtain $\varphi_i(d_{c_1}) \cdot a = a$  for every $a\in {\varphi_{i}}_*(c_2)$.
	 	
	 	Assume $i=1$. The curves $\varphi_{*}(c_n)$ are disjoint from the curves in $\varphi_{*}(c_2)$. Therefore, the curves ${\varphi_{1}}_*(c_n)$ are disjoint from the curves/arcs in ${\varphi_{1}}_*(c_2)$. Denote by ${\psi_{i}}_*(c_2)$ the set of curves/arcs of ${\varphi_{1}}_*(c_2)$ that are entirely contained in $O'_i$.  To show that  $\varphi_1(d_{c_1})$ fixes  $a\in {\varphi_{1}}_*(c_2)$ with orientation, it is enough to see that  $\psi_i(d_{c_1})$ fixes  each curve/arc in  ${\psi_{i}}_*(c_2)$ with orientation.
	 	
	 	By Step 1, we have that  $\psi_i(d_{c_1}) = \psi_i(d_{c_n})^{m_i}$ for certain  $m_i\in \mathbb{N}$. Also, by Lemma \ref{lema:si_fijas_curvas_con_orientacion_fijas_imagen_de_esa_curva_con_orient},  $\varphi(d_{c_n})$ fixes every curve in  $\varphi_*(c_2)$ with orientation. We conclude that $\psi_i(d_{c_n})$ fixes each curve/arc in  ${\psi_{i}}_*(c_2)$ with orientation. Now, from Step 1 we deduce that $\psi_i(d_{c_1})$ also fixes each curve/arc in ${\psi_{i}}_*(c_2)$, as we wanted.
	 	
	 	Since $\varphi_i(d_{c_1})$ fixes every curve and arc in ${\varphi_{i}}_*(c_2)$ with orientation, it follows that $\varphi(d_{c_1})$ fixes the curves $\varphi_{*}(c_2)$ outside an open regular neighborhood of $\varphi_{*}(c_1)$.
	 	
	 	\textbf{Step 3.}
	 	Every curve  $x\in \varphi_{*}(c_2)$ intersects a curve in $\varphi_{*}(c_1)$ (and vice versa). Indeed, from Step 2 it follows that if $x\in \varphi_{*}(c_2)$ is disjoint from every curve in $\varphi_{*}(c_1)$, then $\varphi(d_{c_1})\cdot x = x$. Completing $\{d_{c_1}, d_{c_2}\}$ to a Humphries generating set \new{ in a way that new curves are disjoint from $c_2$} and using  Lemma \ref{lema:si_fijas_curvas_con_orientacion_fijas_imagen_de_esa_curva_con_orient}, we deduce that $\varphi$ fixes the curve $x$, which contradicts that $\varphi$ fixes no curve.

	 	\textbf{Step 4.}
	 	It follows from Step 2 that there exists  a multitwist $M_1$ with components in the multicurve  $\varphi_{*}(c_1)$ such that  $\varphi(d_{c_1}) M_1^{-1} \in \pmcg(S')$ fixes every curve in  $\varphi_{*}(c_2)$. Denote $h_1 = \varphi(d_{c_1}) M_1^{-1}$.

	 	Choose a Humphries generating set $\mathcal{H}$ such that $d_{c_1}, d_{c_2}, d_{c_n} \in \mathcal{H}$ and such that $i(c_1, c)=0$ for every $d_{c}\in \mathcal{H}\setminus\{d_{c_2}\}$ (see Figure \ref{fig:curves}). Let $H=\{c|\: d_c\in \mathcal{H}\}$. By definition, $h_1$ fixes every curve  $x\in \varphi_{*}(c_2)$. And, since $d_{c_1}$ commutes with every Dehn twist $d_{c}\in \mathcal{H}\setminus\{d_{c_2} \}$, \new{using Lemma \ref{lema:si_fijas_curvas_con_orientacion_fijas_imagen_de_esa_curva_con_orient}} we have that $h_1$ fixes every curve $x\in \varphi_{*}(c)$ for every $c\in H$.   
	 	
	 	We show $h_1$ has finite order. Let $Y$ be the surface filled by the curves in $\bigcup_{c_i\in H}\varphi_{*}(c_i)$ \new{and note that no three distinct curves in $\bigcup_{c_i\in H}\varphi_{*}(c_i)$ pairwise intersect}. If $Y=S'$, then the Alexander  method yields that $h_1$ has finite order. 
	 	
	 	Looking for a contradiction, assume $Y\ne S'$ and $h_1$ has infinite order. In this case, the Alexander method implies there is an $m\in \mathbb{N}$ such that  $h_1^m$ is supported on the complement of $Y$. Note that $h_1^k = \varphi(d_{c_1}^k) M_1^{-k}$ is a multitwist with components in $\varphi_{*}(c_1)$. Thus, $h_1^{km}$ is  a multitwist supported on both $\varphi_{*}(c_1)$ and the complement of $Y$. That is,  the components of $h_1^{km}$ are contained in both the multicurve $\varphi_{*}(c_1)$ and in the boundary $\partial Y$. However, by Step 3, no curve is both in    $\varphi_{*}(c_1)$ and   $\partial Y$.  Thus, $h_1$ has finite order.

	 	We write $\varphi(d_{c_1}) = M_1 h_1$ \new{and observe that  $h_1^k=\text{id}$, since $h_1^k=\varphi(d_{c_1})^kM_1^{-k}$ is a finite order multitwist.  }
	 	
	 	Finally, the elements $h_1$ and $M_1$  do not depend on the choice of curve  $c_2$ and the choice of generating set $\mathcal{H}$. To see this, consider another curve $c'_2$ and a different generating set $\mathcal{H}'$, we  obtain  $h_1'$ and $M'_1$ with $M_1h_1 = M'_1h'_1$. By taking $k$-th powers on both sides of the equation, we conclude that  $M_1^k=(M'_1)^k$. Since $M_1,M'_1$ are multitwists, it follows that  $M_1=M'_1$. Then,  $h_1=h'_1$.
	 	
	 	For the  curves $c_2, c_3, \dots$ in $H$ we may similarly define  an element of finite order $h_i$ and a multitwist $M_i$ such that
	 	\[ \varphi(d_{c_i}) = M_i h_i.\]
	 	
    \end{proof}
    
    	 \begin{proof}[Proof of Claim \ref{propiedad:los_elementos_se_trenzan}]
    	Since $d_{c_i}^{d_{c_i}d_{c_j}}=d_{c_j}$,  by taking $k$-th powers and taking the image by $\varphi$ we obtain
    	\[ \left(M_i^{\varphi(d_{c_i}d_{c_j})} \right)^k = M_j^k. \]
    	Both $M_j$ and $M_i^{\varphi(d_{c_i}d_{c_j})}$ are multitwists, and so   $M_j = M_i^{\varphi(d_{c_i}d_{c_j})}$. Again, using $d_{c_i}^{d_{c_i}d_{c_j}}=d_{c_j}$ and taking the image by  $\varphi$, we have
    	\[ 
    	M_i^{\varphi(d_{c_i}d_{c_j})} h_i ^{\varphi(d_{c_i}d_{c_j})} = M_j h_j.
    	\]
    	It follows that   $h_i^{\varphi(d_{c_i}d_{c_j})}=h_j$.

    	Lastly, observe that  $h_m$ fixes the curves in  $\varphi_{*}(c_n)$ for every $m, n$. Thus, $h_m$ commutes with  $M_n$ for every $m, n$. Now, from the  equality $h_i^{\varphi(d_{c_i}d_{c_j})}=h_j$  we obtain  
    	\[ (h_ih_j) h_i (h_ih_j)^{-1} = h_j,\]
    	and we conclude that  $h_ih_jh_i=h_jh_ih_j$. Similarly, from $M_j = M_i^{\varphi(d_{c_i}d_{c_j})}$ one deduces $M_iM_jM_i=M_jM_iM_j$.
    \end{proof}
	
	The proofs of Claim \ref{claim:descomposicion} and Claim \ref{propiedad:los_elementos_se_trenzan} complete the argument for Theorem \ref{thm:clasificacion}.
	\end{proof}

\section{Lifting to the diffeomorphism group}\label{sec:lift}
In this section we  consider  surfaces to be endowed with a differentiable structure.  Recall that $\diffeo_c^+(S)$ is the group of orientation preserving diffeomorphisms of $S$ with compact support on $S\setminus \partial S$. In particular, the elements of $\diffeo_c^+(S)$ fix the boundary of $S$ pointwise and a neighbourhood of each puncture. 

Every element in the pure mapping class group $[\phi] \in \pmcg(S)$ admits a representative in $\diffeo_c^+(S)$. Indeed, this follows from the Dehn-Lickorish theorem and the fact that every Dehn twist admits such representative. As a consequence, the map $\pi:\diffeo_c^+(S) \to \pmcg(S) $ sending each diffeomorphism to its homotopy class is a surjective homomorphism. 

In this section we show that every homomorphism $\varphi:\pmcg(S)\to \pmcg(S')$ induced by a multi-embedding can be lifted to a map between the diffeomorphism groups. In other words, there exists $\overline{\varphi}$ such that the following diagram commutes

 \begin{center}
	\begin{tikzcd}
		\diffeo^+_c(S) \arrow[r, "\overline{\varphi}"] \arrow[d, "\pi"] & \diffeo^+_c(S') \arrow[d, "\pi"] \\
		\pmcg(S) \arrow[r, "\varphi"]                          & \pmcg(S').       
	\end{tikzcd}
\end{center}

\begin{proposition}\label{prop:lifts}
	Let $S, S'$ be finite-type orientable smooth surfaces and let $g\geq 3$ be the genus of $S$. If $\varphi:\pmcg(S)\to \pmcg(S')$ is a homomorphism induced by a multi-embedding, then $\varphi$ admits a lift $\overline{\varphi}: \diffeo^+_c(S)\to \diffeo^+_c(S')$.
\end{proposition}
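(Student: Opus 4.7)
The plan is to define $\overline{\varphi}$ by the very same explicit formula that appears in Definition \ref{def:colec_emb_induc}. Let $\mathcal{I}=\{\iota_i: S\hookrightarrow S'\}_{i=1}^n$ be a multi-embedding inducing $\varphi$. After an isotopy, which does not affect the induced homomorphism on mapping class groups, I will assume that each $\iota_i$ is a smooth embedding (any topological embedding between smooth surfaces is isotopic to a smooth one). I will then define $\overline{\varphi}:\diffeo_c^+(S)\to\diffeo_c^+(S')$ by $\overline{\varphi}(\phi)=\hat\phi$, where
\[
\hat\phi(x) =
\begin{cases}
x & x\notin \iota_i(S)\;\forall i,\\
\iota_i\circ\phi\circ\iota_i^{-1}(x) & x\in \iota_i(S).
\end{cases}
\]

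The main step will be verifying that $\hat\phi$ is a compactly supported orientation-preserving diffeomorphism of $S'$. Since the images $\iota_i(S)$ are pairwise disjoint and each $\iota_i$ is smooth, $\hat\phi$ is automatically smooth on the interior of each $\iota_i(S)$ and on the open complement of $\bigcup_i \iota_i(S)$; the only delicate points are those in the topological frontier of some $\iota_i(S)$ in $S'$. To handle these, I will invoke the compact-support hypothesis on $\phi$: such a $\phi$ equals the identity on a neighborhood of $\partial S$ and of each puncture of $S$, hence $\iota_i\circ\phi\circ\iota_i^{-1}$ agrees with the identity on a neighborhood of the frontier of $\iota_i(S)$ inside $\iota_i(S)$, so $\hat\phi$ glues smoothly to the identity on the complement. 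The same observation keeps the support compact and away from $\partial S'$, so $\hat\phi\in\diffeo_c^+(S')$. Orientation preservation is automatic: if $\iota_i$ reverses orientation then the two reversals cancel, and if it preserves orientation there is nothing to check. Note that this argument uniformly covers all three cases of Lemma \ref{lema:colec_emb_induc}, including the one where $\iota_i(S)$ and $\iota_j(S)$ have frontier curves that are homotopic or even equal, since on both sides of such a curve $\hat\phi$ is identically the identity.

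To conclude, I will check that $\overline{\varphi}$ is a homomorphism and lifts $\varphi$. For $x\in \iota_i(S)$ the diffeomorphism $\hat{\phi_2}$ preserves $\iota_i(S)$, so
\[
\widehat{\phi_1\phi_2}(x)=\iota_i\circ \phi_1\circ \phi_2\circ \iota_i^{-1}(x)=\hat{\phi_1}\bigl(\hat{\phi_2}(x)\bigr),
\]
while for $x\notin \bigcup_i \iota_i(S)$ the equality is trivial, so $\widehat{\phi_1\phi_2}=\hat{\phi_1}\circ \hat{\phi_2}$. The lifting property $\pi\circ \overline{\varphi}=\varphi\circ \pi$ is then immediate from Definition \ref{def:colec_emb_induc}, which states precisely that $\varphi([\phi])=[\hat\phi]=\pi(\overline{\varphi}(\phi))$. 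The only genuine obstacle in the argument is the smoothness of $\hat\phi$ across the frontier of each $\iota_i(S)$, and this is resolved entirely by the compact-support condition on $\phi$; the remaining verifications are formal. In particular the hypothesis $g\geq 3$ plays no direct role in the lift itself beyond being part of the standing hypothesis of the statement.
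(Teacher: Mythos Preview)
Your proof is correct and follows essentially the same approach as the paper: isotope the embeddings to smooth ones and use the explicit extension formula from Definition \ref{def:colec_emb_induc}, with compact support of $\phi$ ensuring smoothness across the frontiers of the $\iota_i(S)$. The paper's version differs only cosmetically---it first arranges the closures $\overline{\iota_i(S)}$ to be pairwise disjoint and cites \cite{hatcher_kirby_2022} for the smoothing step---whereas you handle potentially touching frontiers directly via the compact-support observation and give more explicit verifications of the homomorphism and lifting properties.
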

\begin{proof}
	Let $\mathcal{I}=\{\iota_i: S\xhookrightarrow{} S'|\;\;i=1,2,\dots, n\}$ be a multi-embedding inducing the map $\varphi$. We require that the  images of the embeddings have disjoint  closures, i.e, $\overline{\iota_i(S)} \cap \overline{\iota_j(S)}=\emptyset$ for every $i\ne j$.

	\begin{claim}\label{claim_lift}
		Each embedding $\iota_i:S\to S'$  is isotopic to a  map $\epsilon_i: S\to \iota_i(S)\subset  S'$, where $\epsilon_i$ is smooth in the interior of $S$ and is a diffeomorphism onto its image. 
	\end{claim}
	\begin{proof}[Proof of Claim  \ref{claim_lift}]
		The embedding $\iota_i: S\to S'$ is a homeomorphism onto its image. By \cite[Theorem B]{hatcher_kirby_2022} there exists an isotopy $H: S\times [0,1] \to \iota_i(S)$ such that $H(\cdot, 0)=\iota_i(\cdot)$, $H(\cdot, t)=h_t(\cdot)$ is a homeomorphism for every $t$ and $H(\cdot, 1)=\epsilon_i(\cdot)$ is a diffeomorphism. 
	\end{proof}
	
	Now, we replace  $\mathcal{I}=\{\iota_i: S\xhookrightarrow{} S'|\;\;i=1,2,\dots, n\}$ with the multi-embedding $\mathcal{E}=\{\epsilon_i: S\xhookrightarrow{} S'|\;\;i=1,2,\dots, n\}$. Since we only changed the embeddings by isotopies, $\mathcal{E}$ induces the same homomorphism $\varphi$. Moreover, for any $\phi\in \diffeo_c^+(S)$ we can define 
	 \[ 
	\overline{\phi}(x) = 
	\begin{cases}
		x & x\not \in \epsilon_i(S) \; \forall i,\\ 
		\epsilon_i \circ h \circ \epsilon_i^{-1}(x) & x\in \epsilon_i(S).
	\end{cases}
	\] 
	The map $\overline{\phi}$ is a compactly supported diffeomorphism. It follows that the assignation $\phi \mapsto \overline{\phi}$ defines a homomorphism \[ \overline{\varphi}: \diffeo_c^+(S)\to \diffeo_c^+(S') \]
	and the map $\overline{\varphi}$ is a lift of $\varphi$.
\end{proof}

\begin{proof}[Proof of Theorem \ref{thm:version_souto}]
	The statement follows immediately from  Theorem \ref{thm:clasificacion} and Proposition \ref{prop:lifts}.
\end{proof}

\appendix

\section{Homomorphisms not induced by multi-embeddings}\label{appendix:prueba_lema}

Denote  $S_g$ the closed orientable surface of genus $g\in \mathbb{N}$  and denote   $S_{g,1}$  the surface $S_g$ minus a point. 
\begin{theorem}\label{lema:cota_especifica}
	For every $g\geq 2$ and $g' = (g-1)\cdot 2^{2g}+1$, there exists  an injective homomorphism $\pmcg(S_{g,1})\to \pmcg(S_{g', 1})$ not induced by multi-embeddings. 
\end{theorem}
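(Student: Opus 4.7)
The plan is to carry out an Ivanov--McCarthy-type construction based on the characteristic $\mathbb{Z}/2\mathbb{Z}$-homology cover of the closed surface $S_g$. Let $p:\tilde S\to S_g$ be the regular cover corresponding to $\ker\bigl(\pi_1(S_g)\twoheadrightarrow H_1(S_g;\mathbb{Z}/2\mathbb{Z})\bigr)$, which is a characteristic subgroup of $\pi_1(S_g)$. The deck group is $G\cong(\mathbb{Z}/2\mathbb{Z})^{2g}$ and a direct Riemann--Hurwitz computation yields $\tilde S\cong S_{g'}$ with $g'=(g-1)\cdot 2^{2g}+1$. Fix a distinguished point $x\in S_g$, so that $S_{g,1}\cong S_g\setminus\{x\}$, and a preimage $y\in p^{-1}(x)$, so that $S_{g',1}\cong\tilde S\setminus\{y\}$.

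Because the chosen subgroup is characteristic, every mapping class in $\pmcg(S_{g,1})$, viewed as a homeomorphism of $S_g$ fixing $x$ up to isotopy, admits lifts to $\mcg(\tilde S)$; these lifts form a torsor under the $G$-action by deck transformations. Since $G$ acts freely on the fiber $p^{-1}(x)$, exactly one such lift $\tilde f$ fixes $y$, and setting $\varphi(f)=\tilde f$ defines a homomorphism
\[
\varphi:\pmcg(S_{g,1})\longrightarrow \pmcg(S_{g',1}).
\]
Injectivity of $\varphi$ is a standard path-lifting argument: any isotopy from $\tilde f$ to the identity rel $y$ descends to an isotopy from $f$ to the identity rel $x$.

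The essential content of the statement is that $\varphi$ is not induced by any multi-embedding, and the idea is to exhibit a non-separating curve $c$ on $S_{g,1}$ for which $\varphi(d_c)$ is a proper square root of a multitwist. Choose $c$ with nontrivial class in $H_1(S_g;\mathbb{Z}/2\mathbb{Z})$ and disjoint from $x$, and let $N$ be an annular neighborhood of $c$ also disjoint from $x$. Since $[c]\neq 0$ in $G$, the preimage $p^{-1}(N)$ has exactly $2^{2g-1}$ annular components, each of which double-covers $N$. A direct local computation in annular coordinates shows that $d_c^{2}$ lifts compactly to $p^{-1}(N)$ as the multitwist
\[
m=\prod_{\tilde c\in\mathcal{C}}d_{\tilde c},
\]
where $\mathcal{C}$ is the set of core curves of these $2^{2g-1}$ components; this lift is the identity outside $p^{-1}(N)$ and hence fixes $y$, so by uniqueness $\varphi(d_c^{2})=m$, and therefore $\varphi(d_c)^{2}=m$. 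If $\varphi(d_c)$ were itself a multitwist $\prod_i d_{a_i}^{n_i}$, then $\prod_i d_{a_i}^{2n_i}=m$, and the uniqueness of the multitwist decomposition (Lemma~3.17 in Farb--Margalit) would force $\{a_i\}=\mathcal{C}$ with $2n_i=1$, which has no integer solution. Hence $\varphi(d_c)$ is not a multitwist, while any homomorphism induced by a multi-embedding sends Dehn twists to multitwists; so $\varphi$ is not induced by a multi-embedding.

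The key step requiring care is the local computation identifying the compact lift of $d_c^{2}$ with the multitwist $m$. This reduces to tracking integer offsets in the Dehn-twist formula on a 2-to-1 covering of an annulus by an annulus in the circumferential direction, and verifying that the natural lift supported in $p^{-1}(N)$ is the unique lift of $d_c^{2}$ fixing $y$.
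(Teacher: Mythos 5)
Your construction of the cover and of the injective homomorphism coincides with the paper's: both take the mod-$2$ homology cover $p:S_{g'}\to S_g$, compute $g'=(g-1)2^{2g}+1$ by Riemann--Hurwitz, and use the Ivanov--McCarthy lifting construction. Where you genuinely diverge is in the proof that $\varphi$ is not induced by a multi-embedding. The paper argues purely from the topology of multi-embeddings via Lemma \ref{lema:colec_emb_induc}: a multi-embedding-induced homomorphism out of $\pmcg(S_{g,1})$ can only land in $\pmcg(Y)$ with $Y$ homeomorphic to $S_g$, $S_{g,1}$ or $S_{2g}$, and $g'>2g$. You instead show that $\varphi(d_c)$ is a proper square root of the multitwist $\prod_{\tilde c\in\mathcal C}d_{\tilde c}$ for a curve $c$ with $[c]\neq 0$ in $H_1(S_g;\mathbb{Z}/2\mathbb{Z})$, hence not a multitwist, while multi-embedding-induced maps send Dehn twists to multitwists. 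Your route is longer but yields strictly more: it shows $\varphi$ fails to be multitwist-preserving (the hypothesis of Theorem \ref{thm:preserva_multis_emb}), not merely that its conclusion fails, which is exactly why this example is relevant to the sharpness of the genus bound.

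Two points in your write-up need repair or at least explicit justification. First, your injectivity argument --- ``any isotopy from $\tilde f$ to the identity rel $y$ descends'' --- is not correct as stated: an isotopy upstairs need not be equivariant under the deck group, so it does not descend. The standard argument passes through $\pi_1$ (if $\tilde f_*=\mathrm{id}$ then $f_*$ is the identity on a finite-index subgroup of $\pi_1(S_g)$, and an automorphism of a surface group restricting to the identity on a finite-index subgroup is trivial, since centralizers of nontrivial elements are cyclic); alternatively, cite \cite{ivanov_injective_1999} as the paper does. Second, to apply the uniqueness of multitwist decompositions to $m_A^2=\prod_{\tilde c\in\mathcal C}d_{\tilde c}$ you need the $2^{2g-1}$ core curves in $\mathcal C$ to be pairwise non-isotopic; if some isotopy class occurred with even multiplicity, $m$ could admit a multitwist square root. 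This is true here --- the components of $p^{-1}(c)$ are indexed by the cosets of $\langle c\rangle\,p_*\pi_1(S_{g'})$, and two components are freely homotopic only if they correspond to the same coset, since the centralizer of $c^2$ in $\pi_1(S_g)$ is $\langle c\rangle$ --- but it should be stated rather than absorbed silently into the citation of the uniqueness lemma.
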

\begin{proof}
	Every characteristic cover  $p: S_{g'}\to S_g$ induces an injective homomorphism  $\varphi: \pmcg(S_{g,1})\to \pmcg(S_{g',1})$ (see \cite[Section 2]{ivanov_injective_1999}). To construct $\varphi$ we choose an appropriate cover. Then, we show that $\varphi$ is not induced by a multi-embedding.
	
	For the cover, consider the abelianzation of  $\pi_1(S_{g})$  composed with the map that quotients each factor modulo two
	\[   \pi_1(S_g)  \xrightarrow{ab}  \mathbb{Z}^{2g} \xrightarrow{\text{mod}2} (\mathbb{Z}/2\mathbb{Z} )^{2g}. \]
	The kernel of the composition produces a characteristic cover  $p: \widetilde{S} \to S_{g}$. The Riemann-Hurwitz formula implies $\widetilde{S}$  has genus  $g'= (g-1)\cdot 2^{2g}+1$. In other words, we may write the cover as  $p:S_{g'}\to S_g$. Let $\varphi:\pmcg(S_{g,1})\to \pmcg(S_{g',1})$ be the injective map induced by $p$. We are left to show that such $\varphi$ is not induced by a multi-embedding. 
	
	It is a consequence of Lemma \ref{lema:colec_emb_induc} that if \new{$Y$ is an orientable finite-type surface and }  $\pmcg(S_{g, 1})\to \pmcg(Y)$ a multi-embedding induced homomorphism, then   $Y$ is homeomorphic to either $S_g$, $S_{g, 1}$ or $S_{2g}$. Since $S_{g',1}$ has genus $g'>2g$, we conclude $\varphi$ is not induced by a multi-embedding.   
\end{proof}


\printbibliography

\end{document}